\documentclass[12pt]{amsart}

\setlength{\textheight}{23cm}
\setlength{\textwidth}{16cm}
\setlength{\topmargin}{-0.8cm}
\setlength{\parskip}{0.3\baselineskip}
\hoffset=-1.4cm

\usepackage{amsmath,amsthm,amsfonts,amssymb,amscd}
\usepackage[all]{xy}
\usepackage{sseq}

\author{Thomas John Baird}
\title{Moduli spaces of vector bundles with fixed determinant over a real curve}

\theoremstyle{plain}
\newtheorem{thm}{Theorem}[section]
\newtheorem{prop}[thm]{Proposition}
\newtheorem{lem}[thm]{Lemma}
\newtheorem{cor}[thm]{Corollary}

\theoremstyle{definition}

\theoremstyle{remark}

\numberwithin{equation}{section}

\newcommand{\Z}{\mathbb{Z}}

\newcommand{\Q}{\mathbb{Q}}
\newcommand{\R}{\mathbb{R}}

\newcommand{\C}{\mathbb{C}}

\newcommand{\SL}{\mathcal{L}}

\newcommand{\ttau}{\tilde{\tau}}
\newcommand{\pC}{\mathcal{C}}

\renewcommand{\phi}{\varphi}

\newcommand{\tensor}{\otimes}

\DeclareMathSymbol{\sdp}{\mathbin}{AMSb}{"6F}

\newcommand{\G}{\mathcal{G}}

\newcommand{\E}{\mathcal{E}}
\newcommand{\F}{\mathbb{F}}

\newcommand{\ignore}[1]{}

\begin{document}
\baselineskip=16pt

\maketitle

\begin{abstract}
Let $(\Sigma,\tau)$ denote a Riemann surface of genus $g \geq 2$ equipped with an anti-holomorphic involution $\tau$. 
In this paper we study the topology of the moduli space $M(r,\xi)^\tau$ of stable Real vector bundles over $(\Sigma,\tau)$ of rank $r$ and fixed determinant $\xi$ of degree coprime to $r$. 

We prove that $M(r,\xi)^{\tau}$ is an orientable and monotone Lagrangian submanifold of the complex moduli space $M(r,\xi)$ so it determines an object in the appropriate Fukaya category. We derive recursive formulas for the $\Z_2$-Betti numbers of $M(r,\xi)^\tau$ and compute $\Z_p$-Betti numbers for odd $p$ through a range of degrees. We deduce that if $r$ is even and $ g >>0$, then $M(r,\xi)^{\tau}$ and $M(r,\xi')^{\tau}$ have non-isomorphic cohomology groups unless $\xi$ and $\xi'$ have equivalent Stieffel-Whitney classes modulo automorphisms of $(\Sigma,\tau)$. If $r$ is even, and $g>>0$ is even, we prove that the Betti numbers of $M(r,\xi)^{\tau}$ distinguish topological types of $(\Sigma, \tau; \xi)$. If $r=2$ and $g$ is odd, we compute all $\Z_p$-Betti numbers of $M(2,\xi)^\tau$. 
 
MR 32L05, 14P25.
\end{abstract}

\section{Introduction}\label{intro}

Let $\Sigma$ denote a Riemann surface of genus $g \geq 2$ and let $M(r,d)$ the moduli space of semi-stable holomorphic vector bundles over $\Sigma$ of rank $r$ and degree $d$. For simplicity, we assume throughout this introduction that $r$ and $d$ are coprime, which implies that $M(r,d)$ is a non-singular projective variety (we relax this condition in the rest of the paper). Given $\xi \in Pic_d(\Sigma) = M(1,d) $, denote by $M(r, \xi)$ the moduli space of rank $r$ bundles with fixed determinant $\xi$.  We may regard $M(r, \xi)$ as a fibre $det^{-1}(\xi)$ of the fibre bundle 
\begin{equation}\label{determap}
det: M(r,d) \rightarrow M(1,d)
\end{equation}
which sends the isomorphism class $[ \mathcal{E}]$ to $[\wedge^r \mathcal{E}]$.
A line bundle $\eta \in Jac(\Sigma) = M(1,0)$, determines an isomorphism $M(r,\xi) \cong M(r,\xi \otimes \eta^r)$. In particular, the subgroup $Jac[r] \leq Jac(\Sigma)$, of $r$-th roots of unity acts naturally on $M(r,\xi)$. Tensor product determines an isomorphism
$$  M(r,d) \cong  M(r,\xi) \times_{Jac[r]} M(1,0), $$
where the right side is a the mixed quotient with respect to tensor product actions on both factors.

In \cite{AB}, Atiyah and Bott calculated the cohomology groups of $M(r,d)$ and $M(r,\xi)$. In particular they proved that:
\begin{enumerate}
\item Both $H^*(M(r,d);\Z)$ and $H^*(M(r,\xi);\Z))$ are torsion free.
\item The action of $Jac[r]$ on $H^*(M(r,\xi);\Z))$ is trivial.
\item $H^*(M(r,d);\Z) \cong H^*(M(r,\xi);\Z)) \otimes H^*( M(1,d);\Z)$.
\end{enumerate}
Indeed, (3) follows from (1) and (2) by a simple argument. One of the goals of the present paper is to explore to what degree these properties hold when $M(r,d)$ is replaced by a moduli space of Real bundles over a real curve, as defined in \cite{BHH, Sch}.

A \emph{real curve} $(\Sigma, \tau)$ is a Riemann surface $\Sigma$ equipped with an antiholomorphic involution $\tau$. The fixed point set $\ \Sigma^\tau$ is a union of circles, called the \emph{real circles} of $(\Sigma,\tau)$. There is an induced antiholomorphic involution on $M(r,d)$ (which we also denote $\tau$), defined by
$$ \tau([E])  = [ \tau^*\overline{E}].$$
If $\xi \in M(1,d)$ is fixed by $\tau$, then $\tau$ restricts to an involution on $M(r, \xi)$, which we also denote $\tau$. The fixed point sets by $M(r,d)^{\tau}$ and $M(r,\xi)^{\tau}$ are half dimensional real submanifolds of $M(r,d)$ and $M(r,\xi)$ respectively. 

If $\Sigma^{\tau}$ has $a>0$ path components, then $M(r,d)^{\tau}$ has $2^{a-1}$ path components, parametrized by cohomology clases $w \in H^1(\Sigma^{\tau};\Z_2)$ which satisfy 
\begin{equation}\label{oddSW}
w(\Sigma^{\tau}) \equiv d~mod~2.
\end{equation}
Denote by $M(r,d)^{\tau}_w$ the path component corresponding to $w$. The holomorphic bundles $\mathcal{E} \in M(r,d)^{\tau}$ are precisely those that admit an anti-holomorphic bundle automorphism $\ttau$ lifting $\tau$ and we call such $\mathcal{E}$ \emph{(holomorphic) Real  vector bundles}. The invariant $w$ is simply the first Stiefel-Whitney class of the real locus $E^{\ttau} \rightarrow \Sigma^{\tau}$.  We call a real circle $S \subseteq \Sigma^{\tau}$ \emph{odd} (resp. \emph{even}) with respect to $E \in M(r,d)^{\tau}_w$  if $w(S) =1$ (resp. $w(S) =0$). 

If $\Sigma^{\tau}$ is empty, $M(r,d)^{\tau}_0$ exists as before whenever $d$ is even, but there may also be a path component $M(r,d)_{\mathbb{H}}^\tau$ corresponding to what are called Quaternionic vector bundles (see \cite{BHH} for a details). However tensoring with a Quaternionic line bundle of degree $d'$ determines isomorphism $M(r,d)_{\mathbb{H}}^{\tau} \cong M(r,d +rd')_{0}^{\tau}$, so these Quaternionic vector bundles can safely be neglected for our purposes.

As a Lie group, $M(1,0)_0^{\tau} \cong U(1)^g$. Let $\Gamma_g \cong (\Z/r)^g$ be the $r$-torsion subgroup of $M(1,0)_0^\tau$. Note that $\Gamma_g$ is a subgroup of $Jac[r] \cap M(1,0)^{\tau}$.  We have an isomorphism 
$$ M(r,d)^{\tau}_w \cong M(r,\xi)^{\tau} \times_{\Gamma_g} M(1,0)_0^{\tau}$$
where $\xi \in M(1,d)_w^{\tau}$ and the right side is a the mixed quotient with respect to tensor product actions on both factors (see \cite{B3}, \S 6).
Our first main theorem is a version of (2) and (3) for mod 2 coefficients.

\begin{thm}\label{thm1}
The action of $\Gamma_g$ on $H^*(M(r,\xi);\Z_2)$ is trivial and we have an isomorphism
$$  H^*(M(r,d)^{\tau}_w;\Z_2 ) \cong H^*(M(r,\xi)^{\tau};\Z_2) \otimes H^*( M(1,0)_0^{\tau};\Z_2).$$
\end{thm}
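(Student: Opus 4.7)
The plan is to view $M(r,d)^\tau_w$ as the total space of the smooth fiber bundle
\[ M(r,\xi)^\tau \;\xrightarrow{\iota}\; M(r,d)^\tau_w \;\xrightarrow{\pi}\; M(1,0)_0^\tau/\Gamma_g \]
arising from the mixed-quotient description, and to deduce both assertions of the theorem from a Leray--Hirsch argument combined with the connectedness of $M(1,0)_0^\tau$. The base is a smooth manifold because $\Gamma_g$ acts freely on $M(1,0)_0^\tau$ by translation.

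The key technical step is to exhibit classes on the total space that restrict to a generating set for the fiber cohomology. Slant products of Stiefel--Whitney and Chern classes of the universal Real bundle on $M(r,d)^\tau_w\times\Sigma$ against cycles in $\Sigma$ and $\Sigma^\tau$ yield the real analogues of the Atiyah--Bott generators; these appear in the author's previous paper \cite{B3} and generate $H^{*}(M(r,\xi)^\tau;\Z_2)$. Hence the restriction
\[ \iota^{*}: H^{*}(M(r,d)^\tau_w;\Z_2)\longrightarrow H^{*}(M(r,\xi)^\tau;\Z_2) \]
is surjective.

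Triviality of the $\Gamma_g$-action then follows by a path argument. For $\gamma\in\Gamma_g$, connect it to the identity by a path $\eta_t$ in the connected group $M(1,0)_0^\tau$. Since $\eta_t$ remains in the identity component of the real Jacobian, its mod-$2$ invariant $w_1(\eta_t^\tau)\in H^{1}(\Sigma^\tau;\Z_2)$ vanishes, and the tensor formula $w_1(E\otimes\eta_t)=w_1(E)+r\,w_1(\eta_t)$ implies that the self-map $T_{\eta_t}:[E]\mapsto[E\otimes\eta_t]$ preserves the component $w$. Thus $T_\gamma$ is isotopic to the identity on $M(r,d)^\tau_w$; since $T_\gamma\circ\iota=\iota\circ\gamma$ we obtain $\gamma^{*}\circ\iota^{*}=\iota^{*}\circ T_\gamma^{*}=\iota^{*}$, and the surjectivity above then forces $\gamma^{*}=\id$ on $H^{*}(M(r,\xi)^\tau;\Z_2)$, proving the first assertion. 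Applying Leray--Hirsch to the fibration (the monodromy is trivial because it factors through the now-trivial $\Gamma_g$-action, and the Atiyah--Bott classes provide the fiberwise basis) produces a graded $\Z_2$-module isomorphism $H^{*}(M(r,d)^\tau_w;\Z_2)\cong H^{*}(M(1,0)_0^\tau/\Gamma_g;\Z_2)\otimes H^{*}(M(r,\xi)^\tau;\Z_2)$; since the $r$-th power map on $M(1,0)_0^\tau\cong U(1)^g$ descends to a diffeomorphism $M(1,0)_0^\tau/\Gamma_g\cong M(1,0)_0^\tau$, the two factors share $\Z_2$-cohomology (the exterior algebra on $g$ degree-one generators) and the stated Künneth formula follows.

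The main obstacle is the first step: establishing the surjectivity of $\iota^{*}$. In the complex case this comes from equivariant Morse theory on the space of holomorphic structures, and the Real analogue requires a careful verification that the universal-bundle constructions of \cite{B3} do indeed restrict to a generating set on the fiber, rather than merely to an admissible subset. Once this surjectivity is secured, the path-homotopy argument and the Leray--Hirsch collapse are essentially formal.
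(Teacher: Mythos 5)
Your high-level architecture matches the paper's: both arguments reduce everything to the surjectivity of the restriction $\iota^{*}:H^{*}(M(r,d)^{\tau}_{w};\Z_2)\to H^{*}(M(r,\xi)^{\tau};\Z_2)$, after which the triviality of the $\Gamma_g$-action (via monodromy/isotopy) and the tensor decomposition (via Leray--Hirsch) are formal. The problem is that you have not actually proved that surjectivity, and you say so yourself: you call it ``the main obstacle'' and defer it to ``a careful verification'' that the universal-bundle slant-product classes of \cite{B3} generate $H^{*}(M(r,\xi)^{\tau};\Z_2)$. That generation statement is not proved in \cite{B3} (which computes rational Betti numbers of the rank-two, non-fixed-determinant space), and it is essentially \emph{equivalent} to the surjectivity you need -- in the Atiyah--Bott framework, ``the universal classes generate the cohomology of the moduli space'' is precisely the output of the equivariant-perfection argument, not an input to it. So the proposal assumes the theorem's hard content.

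Concretely, the paper fills this gap with two substantial results that have no counterpart in your sketch. First, Theorem \ref{surjf}: the inclusion $C\G_\R\le\G_\R$ induces a surjection $H^{*}(B\G_\R;\Z_2)\to H^{*}(BC\G_\R;\Z_2)$; this is proved by computing $P_t(BC\G_\R;\Z_2)$ via the Eilenberg--Moore spectral sequence of the homotopy pull-back (\ref{hpd}) and matching it against the upper bound from Lemma \ref{savetroub}. Second, Theorem \ref{eqperf}: the real Harder--Narasimhan stratification is $C\G_\R$-equivariantly perfect over $\Z_2$. This step is genuinely delicate because the element $\epsilon$ required by the Atiyah--Bott non-zero-divisor criterion (Lemma \ref{aoelurcalorce}) does \emph{not} lie in $C\G_\R$; the paper must introduce the intermediate group $\widetilde{C\G}_\R$, show the inclusion $C\G_\R\hookrightarrow\widetilde{C\G}_\R$ is a weak equivalence, and verify that $\widetilde{C\G}_\R$ acts transitively on decompositions of a fixed topological type (Lemma \ref{DecompBund}) so that the strata still have the product description (\ref{stratasplitting}). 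Only the combination of these two theorems yields the surjectivity of $H^{*}(M(r,d)^{\tau}_{w};\Z_2)\to H^{*}(M(r,\xi)^{\tau};\Z_2)$ on which your entire argument rests. Until you supply an independent proof of that surjectivity, the proposal is an outline of the easy half of the theorem, not a proof.
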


Since recursive formulas for the mod 2 Betti numbers of $M(r,\xi)^{\tau}$ were computed in \cite{B, LS}, Theorem \ref{thm1} yields formulas for the Betti numbers of $M(r,\xi)^{\tau}$. We present explicit formulas for $r =2$ and $r= 3$ in \S \ref{Betti numbers and fundamental groups} for convenience.

One of the peculiarities of the mod 2 Betti numbers formulas \cite{B, LS} is that they depend only on the real curve $(\Sigma, \tau)$ and not on the Stiefel-Whitney class $w$. When $r$ is odd this can be explained by the existence of homeomorphisms $M(r,\xi)^{\tau} \cong M(r,\xi')^{\tau}$ for any pair of Real line bundles $\xi$ and $\xi'$, determined by tensoring with third Real line bundle $\eta$ such that $\xi \cong \xi' \otimes \eta^r$. However, such homeomorphisms generally do not exist when $r$ is even.

\begin{thm}\label{thm6}
For $i \in \{1,2\}$ let $(\Sigma_i,\tau_i)$ be real curves of genus $g_i$ and  $a_i$ real circles, and let $\xi_i$ be Real line bundles over $(\Sigma_i, \tau_i)$ with $c_i$ many even circles. Then we have an isomorphism of graded groups 
\begin{equation}\label{whenisom}
H^*(M(r,\xi_1)^{\tau_1};\Z) \cong H^*(M(r,\xi_2)^{\tau_2};\Z)
\end{equation} 
only if $g_1=g_2$, $a_1=a_2$. 

Suppose additionally that $r$ is even and that either $r =2$ and $g_1 \geq 5$ or $r \geq 4$ and $g_1 \geq 3$. Then (\ref{whenisom}) holds only if $c_1=c_2$. 

Suppose in further addition that $g$ is even and $g \geq 6$ . Then (\ref{whenisom}) holds only if there exists a homeomorphism $f: \Sigma_1 \rightarrow \Sigma_2$ such that $f \circ \tau_1 = \tau_2 \circ f$. 
\end{thm}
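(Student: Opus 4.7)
The plan is to recover the topological invariants $(g, a, c, \epsilon)$ that classify $(\Sigma,\tau;\xi)$ up to homeomorphism successively, each from finer features of the cohomology of $M(r,\xi)^\tau$.

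\textbf{Step 1: recovering $g$ and $a$.} Since $M(r,\xi)^\tau$ is closed, orientable (from the Lagrangian/monotonicity result stated in the abstract), and of real dimension $(r^2-1)(g-1)$, Poincaré duality reads off $g$ from the top non-vanishing Betti number. The explicit mod 2 formulas from \cite{B, LS} (recalled in \S \ref{Betti numbers and fundamental groups} and made accessible via Theorem \ref{thm1}, which removes the $M(1,0)_0^\tau$ factor) then express sufficiently many low-degree Betti numbers as explicit functions of $g$ and $a$; inverting these yields $a$. Neither recovery involves $\xi$, so no hypothesis on $r$ is required.

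\textbf{Step 2: recovering $c$.} For odd $r$ the number of even circles is not a homeomorphism invariant, since tensoring with a suitable Real line bundle exchanges $M(r,\xi)^\tau$ and $M(r,\xi')^\tau$; hence the hypothesis that $r$ is even. The mod 2 Betti numbers alone cannot see $c$, so one must use $\Z_p$-coefficients for odd $p$ (or integer Betti numbers in degrees free of $2$-torsion). Using the range of degrees in which these are computed (advertised in the abstract), I would isolate the lowest-degree Betti number whose explicit formula contains $c$ with a nonzero coefficient and invert to solve for $c$. The bounds $g \geq 5$ (for $r=2$) and $g \geq 3$ (for $r \geq 4$) are designed precisely to place the relevant degree inside the computable range.

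\textbf{Step 3: recovering $\epsilon$.} Once $(g, a, c)$ are known, the only remaining topological invariant is $\epsilon \in \{0,1\}$ distinguishing separating from non-separating real curves, and this is ambiguous only when the parity constraint $a \equiv g+1 \pmod 2$ permits both values, which for even $g$ forces $a$ to be odd. This is the main obstacle. The strategy is to identify, still inside the computable range of $\Z_p$-Betti numbers, a low-degree term sensitive to the orientability of the quotient surface $\Sigma/\tau$. Such a term should arise from the Morse-theoretic or equivariant-cohomology decomposition of the relevant character variety as in \cite{B3}, because separating and non-separating real curves differ in how their circles bound the components of $\Sigma \setminus \Sigma^\tau$, feeding different combinatorial data into the gluing/reduction formula. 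The hypothesis that $g$ is even with $g \geq 6$ is needed both to keep the $\epsilon$-sensitive term inside the known range and to rule out accidental coincidences with the terms already controlled by $(g,a,c)$. The technical heart of the argument is to locate this specific Betti number and confirm that its two possible values are genuinely distinct.
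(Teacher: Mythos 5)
Your strategy coincides with the paper's: identify $H^k(M(r,\xi)^\tau;\F)$ with $H^k(B\overline{C\G}_\R;\F)$ in the stable range $k \le g(r-1)-2$ (Corollary \ref{toprovetheorem3}), compute the low-degree odd-characteristic Betti numbers of $BC\G_\R$ explicitly, and read off $c$ and then $\epsilon$ from them; $g$ comes from the dimension and $a$ from degree one (the paper uses $H_1 \cong (\Z/2)^a$ from Theorem \ref{thm3}, you propose the mod 2 formulas -- both work once $g$ is known). You have also correctly diagnosed why the hypotheses $g\ge 5$ for $r=2$ and $g \ge 3$ for $r \ge 4$ appear: they are exactly what makes $g(r-1)-2 \ge 2r-1$, placing the relevant degrees in the stable range.

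However, the proposal stops exactly where the proof begins. The entire content of the theorem is the computation of those Betti numbers (Theorem \ref{BigLongThm} and Corollary \ref{Bettis2r-2}, obtained from the Eilenberg--Moore spectral sequence for $BS\G_\R$ together with the $\Z/2$-action of $C\G_\R/S\G_\R$ on its cohomology), and you defer it in both Step 2 (``I would isolate\dots'') and Step 3 (``the technical heart \dots is to locate this specific Betti number''). Moreover, the one concrete claim you do make -- that the lowest $c$-sensitive Betti number can simply be ``inverted'' to give $c$ -- fails as stated: that coefficient is $\beta_{2r-2} = \binom{c}{2}$ for $c \ge 1$ and $\beta_{2r-2}=1$ for $c=0<a$, so it does not separate $c=0$ from $c=2$ (both give $1$), nor $c=0$ from $c=1$ (both can give $0$). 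The paper needs two further inputs to resolve this: coprimality forces $d$ odd, hence $b=a-c$ odd, hence $a>c$ and $c_1\equiv c_2 \pmod 2$ (killing the $c_1=1$, $c_2=0$ case), and the next coefficient $\beta_{2r-1}$ (equal to $g+1$ when $c=2$ versus $g$ or $g-1$ when $c=0$) kills the remaining $\{0,2\}$ ambiguity. An analogous concrete verification (the degree $2r-2$ term when $c=0$ and the degree $4r-4$ term when $c>0$ in Theorem \ref{BigLongThm}) is what settles Step 3. Without these computations the argument is a plan, not a proof.
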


Theorem \ref{thm6} is proven by computing the odd characteristic Betti numbers in all degrees less than $g(r-1)-1$. For rank $2$ bundles and odd genus, we can do better and compute the entire Poincare polynomial. 

\begin{thm}\label{thm4}
Let $(\Sigma,\tau)$ be a real curve of odd genus and let $\xi$ be a Real line bundle of odd degree for which $\xi$ has $c$ even circles. For any field $\F$ of characteristic $\neq 2$ we have
\begin{equation}\label{oddgbetti}
 P_t( M(2,\xi)^{\tau};\F) =   \frac{1}{2}(1+t^{3})^{g-c-1}  ((1+t)^{c} (1+ t^2)^{c} + (1- t)^{c} (1- t^2)^{c}) .
\end{equation}
\end{thm}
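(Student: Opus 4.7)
The plan is to apply Atiyah-Bott-style equivariant Morse theory for the Yang-Mills functional to the space of Real unitary connections of rank $2$ with fixed Real determinant $\xi$, extending the Real Morse-theoretic computation of \cite{B, LS} from $\Z_2$-coefficients to $\F$-coefficients of characteristic $\neq 2$.

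First, I would set up the Harder-Narasimhan stratification of the space of Real holomorphic structures $\mathcal{C}(2,\xi)^\tau$. The semistable stratum, modulo the Real gauge group with fixed determinant, recovers $M(2,\xi)^\tau$; the unstable strata are indexed by degrees $\ell > d/2$ of maximal destabilizing Real line subbundles $L$. Each unstable stratum is equivariantly a vector bundle over a moduli of Real extensions $0 \to L \to E \to \xi \otimes L^{-1} \to 0$, whose equivariant Poincar\'e polynomial factors into a Jacobian term $(1+t)^g$, classifying-space terms, and Real-circle contributions of the form $(1\pm t)$ and $(1\pm t^2)$ determined by whether each real circle is even or odd with respect to $L$ and $\xi$; this is where the integer $c$ enters.

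Second, assuming equivariant perfection of the stratification over $\F$, I obtain the Atiyah-Bott-style recursion
$$ P_t^{\mathcal{G}^\tau}(\mathcal{C}(2,\xi)^\tau;\F) \;=\; P_t(M(2,\xi)^\tau;\F) \;+\; \sum_{\ell > d/2} t^{2d_\ell}\, P_t^{\mathcal{G}^\tau}(\mathcal{C}_\ell^\tau;\F), $$
where $2d_\ell$ is the real codimension of the $\ell$-th stratum, and then solve for $P_t(M(2,\xi)^\tau;\F)$. The left-hand side is a classifying-space calculation producing the factor $(1+t^3)^{g-c-1}$ (from the $SU(2)$-Real-gauge structure) together with real-circle contributions; the right-hand sum is a geometric series in $t^{2d_\ell}$ which, after grouping terms according to the topological type of $L$ at each even real circle, yields the averaged expression $\tfrac{1}{2}\bigl((1+t)^c(1+t^2)^c + (1-t)^c(1-t^2)^c\bigr)$ appearing in (\ref{oddgbetti}).

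The main obstacle is verifying equivariant perfection of the Morse stratification over $\F$ of odd characteristic. In the complex Atiyah-Bott setting all Morse indices are even and perfection holds automatically for any coefficient ring; in the Real setting the indices acquire topological contributions from $\Sigma^\tau$ whose parity is sensitive to the genus of $\Sigma$, so perfection over odd $\F$ is not automatic. The hypothesis that $g$ be odd is designed precisely to force the required parity; verifying this, most plausibly via a doubling argument that compares the Real moduli problem to a complex moduli problem on the orientation double cover of $(\Sigma,\tau)$ and invokes Atiyah-Bott perfection there, is the technical heart of the proof.
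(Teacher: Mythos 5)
There is a genuine gap, and it sits exactly where you placed your main formula. Over a field $\F$ of odd characteristic the Thom isomorphism identifies the contribution of an unstable stratum not with its untwisted equivariant cohomology $P_t^{\mathcal{G}^\tau}(\mathcal{C}_\ell^\tau;\F)$, as your recursion asserts, but with its equivariant cohomology twisted by the orientation local system of the normal bundle. The paper's proof hinges on the observation that these twisted groups all \emph{vanish}: each unstable stratum's homotopy quotient is, up to homotopy, $B\G(1)_\R \cong (S^1)^{2g}\times(\R P^\infty)^2$, a $K(\Z^{2g}\times(\Z/2)^2,1)$ over which every nontrivial rank-one $\F$-local system is acyclic; the normal bundle has odd rank equal to the codimension $2\ell-d+g-1$ (odd precisely because $g\equiv d \bmod 2$, which is where the odd-genus hypothesis enters), and the gauge element $1\oplus(-1)$ acts by $-1$ on it, so it is non-orientable over one $\R P^\infty$ factor and its Thom space is $\F$-acyclic. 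Hence there is no subtraction at all: $H^*(M(2,\xi)^\tau;\F)\cong H^*(BC\G_\R;\F)$ on the nose. Your proposed mechanism — a nonzero geometric series over the strata producing the averaged term $\tfrac12\bigl((1+t)^c(1+t^2)^c+(1-t)^c(1-t^2)^c\bigr)$ — misattributes that expression; if you carried out your recursion with untwisted coefficients you would get the wrong answer.

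The averaging actually arises upstream, in the computation of $P_t(BC\G_\R;\F)$ itself: one computes $H^*(BS\G_\R;\F)$ via the Eilenberg--Moore spectral sequence of the homotopy pullback built from Real loop groups $LSU_2^{\tau_\alpha},LSU_2^{\tau_\beta},LSU_2^{\tau_\gamma}$, and then takes invariants under $C\G_\R/S\G_\R\cong\Z/2$, which acts by $-1$ on the Euler-class generators attached to the $c$ even circles; the $\tfrac12(\,\cdot\,+\,\cdot\,)$ is the resulting character average. Your suggested doubling argument for "perfection in odd characteristic" is not needed and is not what makes the odd-genus hypothesis work; the parity enters only through the rank of the normal bundles, and the resulting acyclicity is what replaces any perfection argument.
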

Note that the Betti number formula (\ref{oddgbetti}) fails to fully distinguish between topological types of real curves, in contrast to what happens when $g$ is even. In particular, if for $i \in \{ 1,2\}$, $(\Sigma_i, \tau_i,\xi_i)$ are real curves with the same odd genus $g$, the same number of real circles $a$, equipped with Real line bundles with the same number of even circles $c$, but  $ \Sigma_1 \setminus \Sigma_1^{\tau_1}$ is connected and $ \Sigma_2 \setminus \Sigma_2^{\tau_2}$ is disconnected, then the corresponding moduli spaces $M(2,\xi_1)^{\tau_1}$ and $M(2;\xi_2)^{\tau_2}$ have identical Betti numbers in all characteristics. This is a peculiar fact for which I have no moral explanation.

In \cite{B3} the Poincar\'e polynomial of the invariant subring $H^*(M(2,\xi)^{\tau};\Q)^{\Gamma_2}$ was shown to equal $(1+t^3)^{g-1}$, when $c \neq 0$. We deduce that the real analogue of (2) is false.

\begin{cor}\label{thm2}
Let $\F$ be a field of characteristic $\neq 2$. Then the action of $\Gamma_g$ on $H^*( M(2,\xi)^{\tau};\F)$ is non-trivial in general.
\end{cor}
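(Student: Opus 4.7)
The plan is to argue by contradiction, comparing two Poincar\'e polynomial formulas already in hand. Take $\F = \Q$, and suppose that $\Gamma_g$ acts trivially on $H^*(M(2,\xi)^\tau;\Q)$ for every real curve $(\Sigma,\tau)$ of odd genus and every odd-degree Real line bundle $\xi$ with $c \geq 1$ even real circles. Then $H^*(M(2,\xi)^\tau;\Q)$ coincides with its $\Gamma_g$-invariant subring, so the two share the same Poincar\'e polynomial.

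By the result of \cite{B3} quoted just before the corollary, the invariant subring has Poincar\'e polynomial $(1+t^3)^{g-1}$. On the other hand, Theorem \ref{thm4} supplies
$$P_t(M(2,\xi)^\tau;\Q) = \tfrac{1}{2}(1+t^{3})^{g-c-1}\bigl((1+t)^{c}(1+t^{2})^{c} + (1-t)^{c}(1-t^{2})^{c}\bigr).$$
The task therefore reduces to finding one admissible pair $(g,c)$ on which these two expressions disagree. Note that the identity $(1+t)(1+t^{2})+(1-t)(1-t^{2})=2(1+t^{3})$ forces the two formulas to coincide for $c=1$, so we must choose $c\geq 2$.

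The only real ``obstacle'' is then a short polynomial check. I would take $g=3$ and $c=2$, realized by a non-separating real curve of genus $3$ with $a=3$ real circles and a Real line bundle having two even circles, in accordance with the parity constraint (\ref{oddSW}) and Klein's classification of real curves. Either by expanding Theorem \ref{thm4}'s formula directly, or (more cleanly) by specializing at $t=1$ to obtain total Betti numbers $2^{g+c-2}$ versus $2^{g-1}$, one verifies that the two polynomials disagree whenever $c \geq 2$. This contradiction yields the corollary.
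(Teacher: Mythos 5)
Your argument is correct and is exactly the deduction the paper intends: Corollary \ref{thm2} is stated as an immediate consequence of comparing the invariant-subring Poincar\'e polynomial $(1+t^3)^{g-1}$ from \cite{B3} with the full Poincar\'e polynomial of Theorem \ref{thm4}, and your evaluation at $t=1$ (giving $2^{g+c-2}$ versus $2^{g-1}$, which differ precisely when $c\neq 1$) together with the admissible example $(g,a,c)=(3,3,2)$ on a non-separating curve makes that deduction fully explicit. No gap; this matches the paper's route.
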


We also calculate the fundamental group of  $M(r,\xi)^{\tau} $ except when $r=2$ and $g=2$. A consequence is that the real analogue of (1) is false.

\begin{thm}\label{thm3}
Let $(\Sigma, \tau)$ denote a real curve of genus $g \geq 2$ with $a$ real circles, let  $\xi \in M(1,d)^{\tau}$  with $b$ odd circles, and let $r\geq 2$.  We have an isomorphism 
$$  \pi_1(M(r, \xi)^{\tau})  \cong    \begin{cases}  \Z/2 \ltimes (\Z/2)^a & \text{if $r\geq 3$}\\ \Z/2 \ltimes ( (\Z/2)^b \times (\Z)^{a-b}) & \text{if $r = 2$}.\\ \end{cases} $$
where $\Z/2$ acts diagonally: trivially on the $\Z/2$ factors and by $-1$ on the $\Z$ factors.

Consequently, $H_1(M(r,\xi)^{\tau}; \Z) \cong (\Z/2)^a$ unless $g=2$ and $r=2$. 
\end{thm}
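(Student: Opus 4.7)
The plan is to realize $M(r,\xi)^\tau$ as a quotient of a 2-connected infinite-dimensional space by a free Real gauge action, and read off $\pi_1$ as $\pi_0$ of the gauge quotient. Fix a smooth Real vector bundle $(E,\ttau)$ of rank $r$ and determinant $\xi$, and let $\A^\tau$ denote the affine space of Real holomorphic structures on $E$, acted on by the Real gauge group $\gau^\tau_\xi$ of determinant-preserving smooth automorphisms. Because $\gcd(r,d)=1$, every semistable Real bundle is stable with automorphism group equal to $Z^\tau := Z(SU(r))^\tau$, which is $\Z/2$ when $r$ is even and trivial when $r$ is odd. The stable locus $\A^{s,\tau}$ is therefore a principal $\gau^\tau_\xi/Z^\tau$-bundle over $M(r,\xi)^\tau$.

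The first step is to show $\A^{s,\tau}$ is simply connected by establishing that the Real unstable locus has real codimension $\geq 3$ in $\A^\tau$ when $g\geq 2$, using the $\tau$-equivariant Shatz--Atiyah--Bott stratification already employed in \cite{B,B3}. This codimension estimate fails precisely when $r=g=2$ (where the lowest nontrivial Harder--Narasimhan stratum drops to real codimension $2$), which accounts for the theorem's exceptional case. From the long exact sequence of homotopy groups of the principal bundle one obtains
$$\pi_1(M(r,\xi)^\tau)\ \cong\ \pi_0\bigl(\gau^\tau_\xi/Z^\tau\bigr).$$

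The core of the argument is the explicit computation of $\pi_0(\gau^\tau_\xi/Z^\tau)$. Restriction to $\Sigma^\tau$ gives a fibration whose fiber consists of gauge transformations trivial on $\Sigma^\tau$; this fiber is modeled on relative maps $(\Sigma,\Sigma^\tau)\to (SU(r),*)$ and is connected since $SU(r)$ is $2$-connected. Over the base, each real circle $S\subset\Sigma^\tau$ contributes $\mathrm{Map}(S,K_S/Z^\tau)$ where $K_S=SO(r)$ on even circles and $K_S=Sp(r/2)$ on odd circles. The resulting contribution to $\pi_0$ is $\pi_1(K_S/Z^\tau)$: for $r\geq 3$ this is $\Z/2$ in every case, while for $r=2$ one gets $\Z=\pi_1SO(2)$ on even circles and $\Z/2=\pi_1SO(3)$ on odd circles. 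The leading $\Z/2$ in the semidirect product arises from the global component group that comes from the central extension $Z^\tau\to \gau^\tau_\xi\to\gau^\tau_\xi/Z^\tau$: a lift of $-I$ is not always connected to the identity in the quotient. Its conjugation action on the circle contributions is identified by tracing through the Real structure: on an even circle of rank $r=2$ the generator is a winding of $SO(2)$, and conjugation by (a lift of) $-I$ reverses winding, giving $z\mapsto -z$; on the $\Z/2$ circle contributions the action is necessarily trivial.

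The main obstacle is Step~3: rigorously identifying the leading $\Z/2$ generator, verifying the claimed semidirect product structure (in particular the $\pm 1$ conjugation action), and gluing the circle-by-circle $\pi_0$ contributions into the stated global group without extra global relations. For the corollary on $H_1$, abelianize the semidirect product: the relation $\sigma z\sigma^{-1}=z^{-1}$ collapses each $\Z$ summand to $\Z/2$ via $z^2=1$, while the trivial action leaves the $\Z/2$ factors alone; after reconciling the leading $\Z/2$ with the normal subgroup (it becomes dependent in the abelianization) the quotient is $(\Z/2)^a$ as claimed. The case $g=r=2$ requires separate treatment because the simple-connectivity argument for $\A^{s,\tau}$ fails there.
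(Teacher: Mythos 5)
Your overall strategy coincides with the paper's: both identify $\pi_1(M(r,\xi)^\tau)$ with $\pi_0$ of a reduced fixed-determinant real gauge group, using the fact that the unstable strata have codimension $g(r-1)\geq 3$ except when $g=r=2$. However, the core computation of that $\pi_0$ in your Step~3 contains two genuine errors, and you acknowledge that the semidirect product structure is not established.

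First, the local model on odd circles is wrong. For a \emph{Real} bundle ($\ttau^2=+1$) the restriction of the real gauge group to a real circle $S\subset\Sigma^\tau$ is the gauge group of the $\R^r$-bundle $E^{\ttau}|_S$; an odd circle means this bundle is nonorientable (M\"obius type), so the relevant group is the twisted loop group $L_{-1}SO(r)$, not $\mathrm{Map}(S,Sp(r/2))$. Compact symplectic groups arise only for Quaternionic structures ($\ttau^2=-1$), which are excluded here; moreover odd circles occur for every rank, including odd $r$ where $Sp(r/2)$ does not exist. The decisive input you are missing is the computation $\pi_0(LSO(2))\cong\Z$ versus $\pi_0(L_{-1}SO(2))\cong\Z/2$, obtained as cokernels of the boundary map $\pi_1(SO(2))\to\pi_0(\Omega SO(2))$ in the fibration $\Omega SO(r)\to LSU(r)^{\tau_i}\to SO(r)$; this is exactly what produces the $(\Z/2)^b\times\Z^{a-b}$ dichotomy for $r=2$. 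Second, quotienting by the center circle-by-circle via $\mathrm{Map}(S,K_S/Z^\tau)$ is structurally incorrect: the center is a single global constant subgroup and does not distribute over the boundary circles, and the formula gives the wrong answer already for $r=4$, where $\pi_1(SO(4)/\{\pm I\})=\pi_1(PSO(4))\cong\Z/2\times\Z/2$, contradicting your claim of $\Z/2$ for all $r\geq 3$. The correct bookkeeping keeps the determinant-one group $S\G_\R$ (whose boundary restrictions are the full real loop groups, with no central quotient) and treats the extension $1\to S\G_\R\to C\G_\R\to\R^*\to 1$ separately.

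Finally, the semidirect product and the $\pm1$ action, which you flag as the main obstacle, are handled in the paper by an explicit splitting: decompose $E=E'\oplus L$ with $L$ a Real line subbundle and lift $\lambda\in\R^*$ to the gauge transformation acting by $\lambda$ on $L$ and trivially on $E'$. Conjugation by the resulting negative-determinant constant matrix acts trivially on every $\Z/2$ factor and by $-1$ on each $\Z\cong\pi_0(LSO(2))\cong\pi_1(SO(2))$ factor, since an orientation-reversing element of $O(2)$ inverts loops in $SO(2)$. Without this, and without the corrections above, the proposal does not yield the stated group.
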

Note that the conclusion of Theorem \ref{thm3} does not extend to the case $g=2$ and $r=2$. This case is completely worked out in \cite{BCH} where in some examples $H_*(M(r,\xi)^{\tau}; \Z)$ is torsion-free.

The strategy for proving all of the above results is to study the real Harder-Narasimhan stratification introduced in \cite{LS,B}, which is a real analogue of the complex Harder-Narasimhan stratification studied by Atiyah and Bott \cite{AB}. This stratification relates the topology of $M(r,d)^{\tau}$ with the topology of a group of real gauge transformations, $\G_\R$, and was used in \cite{LS,B} to compute $\Z_2$-Betti numbers of $M(r,d)^{\tau}$ and in \cite{B3} to compute the $\Q$-Betti numbers of $M(2,d)^\tau$. In similar fashion, we relate the topology of $M(r,\xi)^{\tau}$ with $C \G_R$, the group of real gauge transformations with constant determinant.

One motivation for studying these fixed determinant moduli spaces is that they form a rich and geometrically interesting class of real Lagrangian submanifolds of $M(r,\xi)$ endowed with the standard Atiyah-Bott symplectic form. In \S \ref{Orientability and Monotonicity}, we prove a couple results that ensure these $M(r,\xi)^{\tau}$ have well-defined Lagrangian Floer cohomology over $\Z_2$-coefficients and thus determine objects in the appropriate Fukaya category \cite{BC,FOOO}.

\begin{thm}\label{orientmonotone}
$M(r,\xi)$ is orientable and monotone with minimal Maslov index a positive multiple of two.
\end{thm}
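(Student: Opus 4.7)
The plan is to decouple the three claims — orientability, monotonicity, and even minimal Maslov index — handling monotonicity and Maslov-evenness together via doubling and tackling orientability through a real structure on the canonical bundle.

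First I would record the symplectic/K\"ahler side: $M(r,\xi)$ is Fano with Picard group $\Z\langle \mathcal{L}\rangle$ for $\mathcal{L}$ the determinant-of-cohomology line bundle (Drezet--Narasimhan), $K_{M(r,\xi)} \iso \mathcal{L}^{-2h}$ for some positive integer $h$, and the Atiyah--Bott symplectic class $[\omega]$ is a positive rational multiple of $c_1(\mathcal{L})$. In particular $[\omega] = \lambda\, c_1(TM(r,\xi))$ in $H^2(M(r,\xi);\R)$ for some $\lambda>0$, so $M(r,\xi)$ is monotone as a symplectic manifold. Since $M(r,\xi)$ is simply-connected and $\pi_2(M(r,\xi)) \iso \Z$ is generated by a class of positive $c_1$, there exist holomorphic spheres with positive Chern number.

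Next I would apply the standard doubling construction. Given a smooth disc $u \colon (D^2, \partial D^2) \to (M(r,\xi), M(r,\xi)^\tau)$, define $\hat u \colon S^2 \to M(r,\xi)$ by gluing $u$ on the northern hemisphere to $\tau \circ u \circ c$ on the southern, where $c$ is complex conjugation; antiholomorphicity of $\tau$ makes $\hat u$ smooth. A standard index calculation (cf.\ \cite{BC}) gives $\mu_L(u) = 2\,c_1(TM(r,\xi))[\hat u]$, manifestly even. Antisymplecticity of $\tau$ yields $\omega[\hat u] = 2\omega(u)$, and combining with the previous step produces $\omega(u) = (\lambda/4)\,\mu_L(u)$, so $M(r,\xi)^\tau$ is monotone with positive constant $\lambda/4$. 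Pushing any class in $\pi_2(M(r,\xi))$ of positive Chern number through the canonical map $\pi_2(M(r,\xi)) \to \pi_2(M(r,\xi), M(r,\xi)^\tau)$ produces a disc of positive even Maslov index, so the minimal Maslov index is a positive multiple of two.

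Finally, for orientability, the plan is to show that the canonical bundle $K_{M(r,\xi)} \iso \mathcal{L}^{-2h}$ admits a $\tau$-equivariant square root, so that the restriction of $\det T_\R M(r,\xi)^\tau$ to the real locus is trivializable. The line bundle $\mathcal{L}$ has an evident real structure induced by the $\tau$-equivariant universal bundle on $\Sigma \times M(r,\xi)$ after passing to the determinant of cohomology; raising to the $(-h)$-th power yields a compatible real structure on $\mathcal{L}^{-h}$, which squares to $K_{M(r,\xi)}$. Trivializing the corresponding real line on $M(r,\xi)^\tau$ gives the required orientation. The main obstacle I foresee is precisely this last step: verifying that the real structure on $\mathcal{L}$ really does descend to $\mathcal{L}^{-h}$ rather than only to $\mathcal{L}^{-2h}$. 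The obstruction lives in $H^1(\langle \tau \rangle; \Z/2)$ and must be shown to vanish using the explicit topological data $(\Sigma, \tau, \xi)$. If this descent proves delicate, a back-up strategy is to invoke Theorem \ref{thm3}, which computes $\pi_1(M(r,\xi)^\tau)$ explicitly, and then verify that the $w_1$ homomorphism vanishes on each generator by a local normal-form calculation near a standard real bundle.
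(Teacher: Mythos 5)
Your proposal follows essentially the same route as the paper: orientability is obtained by exhibiting the canonical bundle of $M(r,\xi)$ as the square of a Real line bundle, so that its restriction to $M(r,\xi)^{\tau}$ is the square of a real line bundle and hence trivial; monotonicity and evenness of the Maslov index come from the antisymplectic doubling formula together with the fact that $c_1(TM(r,\xi))$ is twice the ample generator. Two remarks. First, the paper produces the Real structure on the ample generator $\mathcal{L}$ more cheaply than you do: by Drezet--Narasimhan \cite{DN} it is represented by the theta divisor $\Theta = \{\mathcal{E}\,:\,H^0(\Sigma,\mathcal{E}\otimes\mathcal{F})\neq 0\}$, which is defined independently of choices and hence satisfies $\tau(\Theta)=\Theta$ on the nose (Proposition \ref{linesreal}); this sidesteps both the existence of a Real universal bundle (delicate when $\Sigma^{\tau}=\emptyset$) and the descent worry you raise at the end, which is in fact a non-issue --- once $\mathcal{L}$ is Real, every power $\mathcal{L}^{-h}$ is Real, and the real locus of its square is automatically trivial, so no obstruction in $H^1(\langle\tau\rangle;\Z/2)$ needs to be computed. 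Second, your doubling formula $\mu_L(u) = 2\,c_1(TM)[\hat u]$ is off by a factor of two (for $\R P^n\subset\C P^n$ it would give minimal Maslov index $2(n+1)$ instead of $n+1$); the correct statement, \cite{MS} Theorem C.3.6, is $\mu_L(u)=c_1(TM)[\hat u]$, so evenness is not ``manifest'' from the doubling alone but follows, exactly as in the paper, from $c_1(TM)=2[\Omega]$ being an even integral class --- a fact you already record. Neither point damages your conclusions; the monotonicity constant merely changes from $\lambda/4$ to $\lambda/2$.
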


We have not been able to prove that $M(r,\xi)^\tau$ is relatively spin in general, so the Fukaya category is defined only with $\Z_2$ coefficients. However in \cite{BCH} we prove that when $r=2$, $M(2,\xi)^\tau$ is relatively spin in $M(2,\xi)$ so it determines an object in a $\Z$-Fukaya category.

We summarize the contents of the paper. In \S \ref{Strategy for proving Theorems} we outline the basic strategy relating $M(r,\xi)$ to the real Harder-Narsimhan filtration and the classifying space of the real gauge group $C \G_\R$. The technical heart of the paper is \S \ref{Topology of BCG} where we compute the Betti numbers of $BC \G_\R$ using an Eilenberg-Moore spectral sequence and also compute the fundamental group, yielding proofs of Theorems \ref{thm6} and \ref{thm3}. In \S \ref{Equivariant perfection} we prove that the real Harder-Narasimhan filtration is $C\G_\R$-equivariantly perfect with respect to mod 2 coefficients, completing the proof of Theorem \ref{thm1}. In \S \ref{Proof of thm4} we prove Theorem \ref{thm4} by showing that the Thom spaces of the unstable strata are $\F$-acyclic. In the remaining sections we illustrate our results with some examples and prove Theorem \ref{orientmonotone}.

\textbf{Notational conventions:}  
If $G$ is a topological group acting on a topological space $X$ we denote $X_{hG} = EG \times_G X$ the homotopy quotient.  Denote the Poincar\'e series $P_t(X;\F) = \sum_{i=0}^{\infty} \dim(H^i(X;\F))t^i$.

\section{Basic strategy}\label{Strategy for proving Theorems}

We recall the construction of $M(r,d)_w^{\tau}$ from \cite{ BHH, Sch}. We no longer require $gcd(r,d) =1$.

Fix a real curve $(\Sigma, \tau)$. Topologically, real curves are classified (see \cite{W}) by invariants $(g,a,\epsilon)$ where $g \geq 0$ is the genus of $\Sigma$, $a = \pi_0(\Sigma^\tau)$ is the number of real circles, and $\epsilon =1$ if $\Sigma \setminus \Sigma^{\tau}$ is connected and $\epsilon =0$ if $\Sigma \setminus \Sigma^{\tau}$ is disconnected. A real curve with invariants $(g,a,\epsilon)$ exists if and only if $1-\epsilon \leq a \leq g+1-\epsilon$ and $ g+1 \equiv a~mod~2$ if $\epsilon= 0$. 

Fix a smooth complex vector bundle $\pi: E\rightarrow \Sigma$ of rank $r$ and degree $d$ endowed with an anti-linear bundle $\ttau: E \rightarrow E$ such that $\pi \circ \ttau = \tau \circ \pi$. We call $(E,\ttau)$ a $C^{\infty}$-Real vector bundle over $(\Sigma, \tau)$. The fixed point set $E^{\ttau} \rightarrow \Sigma^{\tau}$ is a $\R^r$-bundle and we require that $w = w_1(E^{\ttau})$. Topologically, $(E,\ttau)$ is classified (\cite{BHH}) by $d$ and $w$, subject to the condition that 
\begin{equation}\label{d=w}
 d \equiv w(\Sigma^{\tau})~mod~2.
\end{equation} 
and $w(\Sigma^{\tau})$ is equal the number of odd circles for $w$ (see \cite{BHH}).

Denote by $\pC =\pC(E) = \pC(r,d)$ (the Sobolev completion of) the space holomorphic structures on $E$, represented by $L^2_s$-Cauchy-Riemann operators $\bar{\partial}$ on $E$ for some fixed $s >1$. Denote by $\pC^{\ttau}$ the subspace of holomorphic structures that commute with $\ttau$, which we call Real holomorphic structures. As topological spaces both $\pC$ and $\pC^{\ttau}$ are contractible Banach manifolds. $\pC^{\ttau}$ is acted upon by the \emph{real gauge group} $$\G_{\R}= \G(E)_\R =  \G^{\ttau},$$ consisting of $L_{s+1}^2$-gauge transformations that commute with $\ttau$. 

In case $E= L$ is a line bundle, there is a natural isomorphism $\G(L) \cong Maps(\Sigma, \C^*)$, so the isomorphism type of $\G(L)$ is independent of $L$. If $(L,\ttau)$ is a Real line bundle over $(\Sigma, \tau)$, then $\G(L)^{\ttau}$ is identified with maps that are equivariant with respect to involutions on $\Sigma$ and $\C^*$, so $\G(L)^{\ttau} = Maps_{\Z/2}(\Sigma, \C)$ is also independent of $(L,\ttau)$.  We write $$ \G(1)_{\R} = \G(L)^{\ttau}$$ to make this independence explicit.

$\pC^{\ttau}$ admits a $\G_\R^{\ttau}$-equivariant stratification $\bigcup_\mu \pC^{\ttau}_{\mu}$  according to real Harder-Narasimhan type (\cite{B} \S 2). This stratification is equivariantly perfect with respect to the $\G_{\R}$-action and $\Z_2$-coefficients. This means that
$$ P_t(\pC^{\ttau}_{h\G_\R};\Z_2) = \sum_{\mu} t^{d_\mu} P_t((\pC^{\ttau}_\mu)_{h\G_\R};\Z_2)  $$
where $d_\mu$ is the codimension of $\pC_\mu$ in $\pC$. Since the central subgroup of scalars $ \R^* \leq \G_{\R}$ acts trivially it is sometimes preferable to work with the quotient group $\overline{\G}_{\R}  = \G_{\R}/\R^*$ which acts effectively. The stratum $\pC^{\ttau}_{ss}$ consisting of those Real holomorphic structures that are geometrically semistable is dense and open. The $\G_{\R}$-action restricts to  $\pC^{\ttau}_{ss}$ with orbit space $$\pC^{\ttau}_{ss}/\G_{\R} = \pC^{\ttau}_{ss}/\overline{\G}_{\R} = M(r,d)^\tau_w.$$

If $gcd(r,d)=1$, then $\overline{\G}_\R$ acts freely on $\pC^{\ttau}_{ss}$ and the quotient exact sequence $1 \rightarrow \R^* \rightarrow \G_{\R} \rightarrow \overline{\G}_{\R} \rightarrow 1$  splits (\cite{B}, Lemma 7.1), so we have a non-canonical isomorphism 
\begin{equation}\label{noncansplit}
\G_{\R} \cong \overline{\G}_{\R} \times \R^*.
\end{equation}

Consider now the subgroup $C\G_{\R} \leq \G_{\R}$ of real gauge transformations with constant determinant. These are the gauge transformations of $E$ that act as a constant scalar multiplication on the determinant line bundle $\Lambda^rE$, so $C\G_{\R} $ fits into a short exact sequence
\begin{equation}\label{defineCG}
1 \rightarrow  C\G_{\R} \rightarrow \G_{\R} \rightarrow \overline{\G(1)}_{\R} \rightarrow 1,
\end{equation}
where surjectivity of $\G_{\R} \rightarrow \overline{\G(1)}_{\R} = \overline{\G( \Lambda^rE)}_{\R}$ follows by considering a Whitney sum decomposition of $E$ into Real line bundles (see (\ref{decompofrealbundles})). We will later need the following.

\begin{lem}\label{homtype1}
The group of path components $ \pi_0( \overline{ \G(1)}_\R)$ is isomorphic to $\Z^g$ and the identity component of $ \overline{ \G(1)}_\R$ is contractible. Therefore $B \overline{ \G(1)}_\R \cong (S^1)^g$.
\end{lem}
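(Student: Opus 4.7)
The plan is to identify $B\overline{\G(1)}_{\R}$ directly with the real Picard component $M(1,0)_0^{\tau}$ via a standard Borel quotient argument, thereby reducing the problem to reading off homotopy groups of a torus. Recall from the introduction that $M(1,0)_0^{\tau} \cong U(1)^g$ as a Lie group, hence is a $K(\Z^g, 1)$.

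Fix any $C^{\infty}$-Real line bundle $L$ over $(\Sigma, \tau)$ of degree $0$ with trivial first Stiefel--Whitney class. The space $\pC(L)^{\ttau}$ of Real holomorphic structures on $L$ is a contractible affine space, and by construction its quotient by the gauge action of $\G(L)^{\ttau} = \G(1)_{\R}$ is precisely $M(1,0)_0^{\tau}$. Every line bundle is automatically stable, so the stabilizer of each point under $\G(1)_{\R}$ is the central subgroup $\R^{*}$ of constant real scalars; consequently $\overline{\G(1)}_{\R} = \G(1)_{\R}/\R^{*}$ acts freely. The standard abelian gauge-theoretic slice theorem in the Sobolev setup fixed earlier in this section makes $\pC(L)^{\ttau} \to M(1,0)_0^{\tau}$ into a principal $\overline{\G(1)}_{\R}$-bundle, so the Borel construction gives
\begin{equation*}
B\overline{\G(1)}_{\R} \;\simeq\; (\pC(L)^{\ttau})_{h\overline{\G(1)}_{\R}} \;\simeq\; \pC(L)^{\ttau}/\overline{\G(1)}_{\R} \;=\; M(1,0)_0^{\tau} \;\cong\; (S^1)^g.
\end{equation*}

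Taking homotopy groups yields $\pi_0(\overline{\G(1)}_{\R}) \cong \pi_1((S^1)^g) = \Z^g$ and $\pi_n(\overline{\G(1)}_{\R}) \cong \pi_{n+1}((S^1)^g) = 0$ for $n \geq 1$. Since $\overline{\G(1)}_{\R}$ is a Banach Lie group and hence has the homotopy type of a CW complex, its identity component has trivial homotopy groups and is therefore contractible by Whitehead's theorem. The only real technical point is verifying that the free $\overline{\G(1)}_{\R}$-action is principal, so that the Borel and naive quotients agree; this is routine for abelian gauge theory on a curve and follows from the same slice/ellipticity considerations already invoked in this section to define $M(r,d)^{\tau}_{w}$, and in \cite{B}.
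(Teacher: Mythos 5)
Your proposal is correct and follows essentially the same route as the paper: both identify $B\overline{\G(1)}_{\R}$ with $M(1,0)^{\tau}_w \cong (S^1)^g$ via the free action of $\overline{\G(1)}_{\R}$ on the contractible space $\pC(L)^{\ttau}$, and then read off that this is a $K(\Z^g,1)$. The extra care you take over the slice theorem and Whitehead's theorem is fine but not a substantive difference.
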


\begin{proof}
Since $\overline{\G(1)}_{\R}$ acts freely on the contractible space $C(L)^{\ttau} = C(L)^{\ttau}_{ss}$ it follows that  $$B \overline{\G(1)}_{\R} = C(L)^{\ttau}/ \overline{\G(1)}_{\R} = M(1,0)^\tau_w. $$ Since $M(1,0)_w^{\tau}$ is homeomorphic to $(S^1)^g$ it follows that $B \overline{\G(1)}_{\R}$ is a $K(\Z^g, 1)$ and thus that the quotient map $ \overline{\G(1)}_{\R} \rightarrow \pi_0( \overline{\G(1)}_{\R} ) \cong \Z^g$ is a homotopy equivalence.
\end{proof}

The scalar transformations are contained in  $C \G_{\R}$ so defining $ \overline{C\G}_{\R} = C \G_{\R}/\R^*$, gives rise to a short exact sequence
$$1 \rightarrow  \overline{C\G}_{\R} \rightarrow \overline{\G}_{\R} \rightarrow \overline{\G(1)}_{\R} \rightarrow 1.$$

 If $gcd(r,d) =1$ we have with a non-canonical isomorphism
 \begin{equation} \label{noncansplit}
C \G_{\R} \cong \overline{C\G}_{\R} \times \R^*.
\end{equation}

\begin{lem}\label{lemstuff}
Let $(E,\ttau)$ be a  Real $C^{\infty}$-vector bundle  of  rank $r$ and degree $d$ with $w := w_1(E^{\ttau})$, and let $\xi \in M(1,d)_{w}^{\tau}$. Then there is a homotopy equivalence $ M(r, \xi)  \cong \pC^{\ttau}_{ss}/C\G_{\R} \cong  \pC^{\ttau}_{ss}/\overline{C\G}_{\R}$. 
\end{lem}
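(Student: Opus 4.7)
The plan is to realize $\pC^{\ttau}_{ss}/C\G_{\R}$ as (homotopy equivalent to) the pullback of the determinant fibration $\det: M(r,d)^{\tau}_w \to M(1,d)^{\tau}_w$ along the principal $\overline{\G(1)}_{\R}$-bundle $\pC(\Lambda^r E)^{\ttau} \to M(1,d)^{\tau}_w$, and then conclude using the contractibility of $\pC(\Lambda^r E)^{\ttau}$.

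First I observe that the determinant operator $\det: \pC(E)^{\ttau} \to \pC(\Lambda^r E)^{\ttau}$ is an affine-linear surjection (its linear part being the pointwise trace $\Omega^{0,1}(\End E) \to \Omega^{0,1}(\C)$), hence a trivial affine bundle. It is $\G_{\R}$-equivariant with target action factored through the determinant homomorphism $\G_{\R} \to \G(1)_{\R}$; since scalar gauge transformations commute with Cauchy--Riemann operators, this action descends further through $\overline{\G(1)}_{\R} = \G_{\R}/C\G_{\R}$. In particular $C\G_{\R}$ acts trivially on $\pC(\Lambda^r E)^{\ttau}$, so $\det$ descends to a continuous map
$$\overline{\det}: \pC^{\ttau}_{ss}/C\G_{\R} \lra \pC(\Lambda^r E)^{\ttau}.$$

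Combining $\overline{\det}$ with the further projection $\pC^{\ttau}_{ss}/C\G_{\R} \to \pC^{\ttau}_{ss}/\G_{\R} = M(r,d)^{\tau}_w$ yields a continuous map
$$\Phi: \pC^{\ttau}_{ss}/C\G_{\R} \lra \pC(\Lambda^r E)^{\ttau} \times_{M(1,d)^{\tau}_w} M(r,d)^{\tau}_w, \qquad [\bar\partial] \mapsto (\det\bar\partial,\,[\bar\partial]),$$
which I claim is a homeomorphism. Set-theoretic bijectivity rests on two facts already in the text: surjectivity of $\G_{\R} \to \overline{\G(1)}_{\R}$ (the Whitney decomposition argument following (\ref{defineCG})), which lets us translate any $\bar\partial$ so that its determinant equals any prescribed lift $\bar\partial_{\xi}$ of $[\det\bar\partial]$; and freeness of the $\overline{\G(1)}_{\R}$-action on $\pC(\Lambda^r E)^{\ttau}$ (used in the proof of Lemma \ref{homtype1}), which guarantees the translation is unique modulo $C\G_{\R}$. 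Continuity of $\Phi^{-1}$ then follows from the local triviality of the affine bundle $\det$ together with local slices for the Banach Lie group action of $\overline{\G(1)}_{\R}$ on $\pC(\Lambda^r E)^{\ttau}$.

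Since $\pC(\Lambda^r E)^{\ttau}$ is a contractible Banach affine space, the pullback in the codomain of $\Phi$ is a fibration over a contractible base with fiber $\det^{-1}(\xi) = M(r,\xi)^{\tau}$, so the total space is homotopy equivalent to $M(r,\xi)^{\tau}$. This yields the first equivalence. The second equivalence $\pC^{\ttau}_{ss}/C\G_{\R} \iso \pC^{\ttau}_{ss}/\overline{C\G}_{\R}$ is immediate, since the central $\R^* \leq C\G_{\R}$ acts trivially on $\pC^{\ttau}$ (scalars commute with $\bar\partial$-operators), so the two quotients coincide set-theoretically and as topological spaces. The main technical obstacle is verifying that $\Phi$ is a homeomorphism rather than merely a bijection, which amounts to controlling the topology of $C\G_{\R}$-orbits on $\pC^{\ttau}_{ss}$; this is standard in the Sobolev-completed setup but must be invoked rather than re-derived.
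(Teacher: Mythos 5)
Your proposal is correct and follows essentially the same route as the paper: both identify $\pC^{\ttau}_{ss}/\overline{C\G}_{\R}$ with the pullback of $\det: M(r,d)^{\tau}_w \to M(1,d)^{\tau}_w$ along the contractible space $\pC(\Lambda^r E)^{\ttau}$, and then read off the fibre $M(r,\xi)^{\tau}$. Your version merely spells out the point-set verification (bijectivity and continuity of the identification) that the paper leaves implicit.
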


\begin{proof}
Consider the determinant map $\det: \pC^{\ttau} \rightarrow \pC(\Lambda^rE)^{\ttau}$. This is equivariant with respect to $\overline{\G}_{\R}$ and $\overline{C\G}_{\R}$ is the stabilizer of every point in $\pC(\Lambda^rE)^{\ttau}$. Consequently, we can identify $(\pC^{\ttau}_{ss})/\overline{C\G}_{\R} $ as the pull-back of the diagram

$$ \xymatrix{ &    \pC^{\ttau}_{ss}/\overline{\G}_{\R} = M(r,d)_{w}^\tau\ar[d] \\
\pC(\Lambda^rE)^{\ttau}\ar[r]  &  \pC(\Lambda^rE)^{\ttau}/\overline{\G}_{\R}  = M(1,d)_{w}^\tau}.$$
Since both morphisms in the diagram are fibre bundles, the pull-back is homotopy equivalent to the homotopy pull-back.  Since $\pC(\Lambda^rE)^{\ttau}$ is contractible, we conclude that $\pC^{\ttau}_{ss}/\overline{C\G}_{\R}$ is homotopy equivalent to the fibre of the determinant map $M(r,d)_{w}^\tau \rightarrow M(1,d)_{w}^\tau$.

\end{proof}

\begin{cor}\label{homotequivMC}
With notation as in Lemma \ref{lemstuff}, if $gcd(r,d)=1$ then we have a homotopy equivalences  $M(r,\xi) \cong (C_{ss})_{h \overline{C\G}_\R}$ and  $M(r,\xi) \times B \R^* \cong (C_{ss})_{h C\G_\R}$.
\end{cor}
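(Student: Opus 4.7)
The plan is to derive both equivalences from two observations valid when $\gcd(r,d) = 1$: (i) the group $\overline{C\G}_\R$ acts freely on $\pC^\ttau_{ss}$, and (ii) the central scalars $\R^* \leq C\G_\R$ act trivially on $\pC^\ttau_{ss}$.

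For (i), freeness of the $\overline{\G}_\R$-action on $\pC^\ttau_{ss}$ in the coprime case is recorded in the discussion preceding (\ref{noncansplit}), and this property passes to the subgroup $\overline{C\G}_\R$. A standard slice argument (as in \cite{AB}, transported to the real locus as in \cite{B}) then promotes the orbit map $\pC^\ttau_{ss} \to \pC^\ttau_{ss}/\overline{C\G}_\R$ to a principal $\overline{C\G}_\R$-bundle, so its base is homotopy equivalent to the Borel construction $(\pC^\ttau_{ss})_{h\overline{C\G}_\R}$. Combined with the identification of this quotient with $M(r,\xi)^\tau$ from Lemma \ref{lemstuff}, this yields the first equivalence. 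The main technical point is to check that the slice theorem used for $\overline{\G}_\R$ in \cite{B} restricts nicely to the closed subgroup $\overline{C\G}_\R$; once that is settled, the rest is immediate.

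For (ii), a constant scalar $\lambda \in \R^*$ conjugates any Cauchy-Riemann operator to itself, so $\R^* \leq C\G_\R$ acts trivially on $\pC^\ttau_{ss}$. Using the non-canonical splitting $C\G_\R \cong \overline{C\G}_\R \times \R^*$ from (\ref{noncansplit}) together with the general fact that for a product group $H \times K$ acting on $X$ with $K$ acting trivially one has $X_{h(H \times K)} \simeq X_{hH} \times BK$, it follows that
$$ (\pC^\ttau_{ss})_{hC\G_\R} \simeq (\pC^\ttau_{ss})_{h\overline{C\G}_\R} \times B\R^* \simeq M(r,\xi)^\tau \times B\R^*,$$
which is the second claimed equivalence.
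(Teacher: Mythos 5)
Your proposal is correct and follows essentially the same route as the paper: the paper's proof likewise combines the splitting $C\G_\R \cong \overline{C\G}_\R \times \R^*$ with the observations that $\R^*$ acts trivially and $\overline{C\G}_\R$ acts freely, and then invokes Lemma \ref{lemstuff}. You have merely spelled out the standard details (the slice argument giving the principal bundle structure and the product formula for homotopy quotients) that the paper leaves implicit.
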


\begin{proof}
If $gcd(r,d)=1$ then $C\G_\R \cong \overline{C\G}_\R \times \R^*$ where $\R^*$ acts trivially and $\overline{C\G}_\R$ acts freely. The result now follows from Lemma \ref{lemstuff}.
\end{proof}

The strategy for proving Theorem \ref{thm1} is as follows. We have diagram of homotopy quotients

$$ \xymatrix{  (\pC^{\ttau}_{ss})_{h \overline{C\G}_\R} \ar[r] \ar[d] &   (\pC^{\ttau})_{h \overline{C\G}_\R}  \ar[d]\\
  (\pC^{\ttau}_{ss})_{h \overline{\G}_\R} \ar[r] &  (\pC^{\ttau})_{h \overline{\G}_\R} .} $$ 
where arrows are induced by inclusions $\pC^{\ttau}_{ss} \hookrightarrow  \pC^{\ttau}$ and $\overline{C\G}_\R \hookrightarrow \overline{\G}_\R$. If $gcd(r,d) =1$, then this diagram is equivalent up to homotopy to
\begin{equation}\label{LHdiagram}
 \xymatrix{ M(r,\xi)^\tau \ar[r] \ar[d]^{i}  &   B\overline{C\G}_\R \ar[d]\\
 M(r,d)_w^{\tau} \ar[r] & B\overline{\G}_\R .} 
 \end{equation}
Here $i$ can be identified with the fibre inclusion (\ref{determap}). We will show that all of the maps in (\ref{LHdiagram}) induce $\Z_2$-cohomology surjections. Theorem \ref{thm1} then follows by the Leray-Hirsch Theorem.

To prove our results on odd characteristic cohomology, we use the following.

\begin{cor}\label{toprovetheorem3}
If $gcd(r,d) =1$ then the map (\ref{LHdiagram}) $M(r,\xi) \rightarrow B\overline{C\G}_\R$ induces a surjection on $\pi_k$ for $k\leq g(r-1)-1$ and an isomorphism for $k \leq g(r-1)-2$. Consequently $$H_k(M(r,\xi);\Z) \rightarrow  H_k(BC\G_\R; \Z)$$and 
$$ H_k(BC\G_\R; \F) \rightarrow H_k(M(r,\xi);\F) $$ 
are isomorphisms for all $k \leq g(r-1)-2$ and coefficient fields $\F$.
\end{cor}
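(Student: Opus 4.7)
The plan is to identify the map $M(r,\xi)^\tau \to B\overline{C\G}_\R$ with the Borel construction of the $\overline{C\G}_\R$-equivariant open inclusion $\pC^{\ttau}_{ss} \hookrightarrow \pC^{\ttau}$, and then to bound its connectivity using codimension estimates on the real Harder-Narasimhan stratification. By Lemma \ref{lemstuff} together with the contractibility of $\pC^{\ttau}$, we have homotopy equivalences $M(r,\xi)^\tau \simeq (\pC^{\ttau}_{ss})_{h \overline{C\G}_\R}$ and $B\overline{C\G}_\R \simeq (\pC^{\ttau})_{h \overline{C\G}_\R}$, under which the map in question is precisely the one induced on Borel constructions by this open inclusion. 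Comparing the two Borel fibrations over $B\overline{C\G}_\R$ by the five lemma, the connectivity of the induced map on Borel constructions agrees with that of the fibre inclusion $\pC^{\ttau}_{ss} \hookrightarrow \pC^{\ttau}$.

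Next I would invoke the real Harder-Narasimhan stratification from \cite{B,LS}, which decomposes $\pC^{\ttau} \setminus \pC^{\ttau}_{ss}$ as a disjoint union of locally closed smooth submanifolds $\pC^{\ttau}_\mu$ indexed by non-trivial Real HN types $\mu = ((r_1, d_1), \ldots, (r_s, d_s))$. The real codimension of $\pC^{\ttau}_\mu$ in $\pC^{\ttau}$ equals the complex codimension of the underlying complex stratum $\pC_\mu \subset \pC$, given by the Atiyah-Bott formula $\sum_{i<j}(r_j d_i - r_i d_j + r_i r_j(g-1))$. A direct case analysis over all HN types compatible with $\gcd(r,d)=1$ shows this quantity is minimized by the extremal two-step type $((r-1, 1), (1, 0))$, yielding a uniform codimension lower bound of the expected order $g(r-1)$. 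Combining this bound with a general-position / normal-bundle Thom-space connectivity argument then shows that $\pC^{\ttau}_{ss} \hookrightarrow \pC^{\ttau}$ induces a surjection on $\pi_k$ for $k \le g(r-1)-1$ and an isomorphism for $k \le g(r-1)-2$.

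The cohomology consequences then follow by standard arguments: the integral homology surjection $H_k(M(r,\xi)^\tau;\Z) \to H_k(BC\G_\R;\Z)$ for $k \le g(r-1)-2$ comes from the relative Hurewicz theorem applied to the mapping cylinder of the $\pi_k$-equivalence, and the field cohomology isomorphism $H^k(BC\G_\R;\F) \to H^k(M(r,\xi)^\tau;\F)$ in the same range follows from the universal coefficient theorem. The passage from $\overline{C\G}_\R$ to $C\G_\R$ is harmless because, by the non-canonical splitting (\ref{noncansplit}), $BC\G_\R$ differs from $B\overline{C\G}_\R$ by a factor of $B\R^*$, whose $\F$-cohomology in odd characteristic is trivial outside degree zero and whose integral contribution is absorbed by the range under consideration via a K\"unneth argument. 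The main obstacle is the second paragraph: carrying out the codimension case analysis uniformly across all unstable Real HN types and extracting the sharp numerical range claimed, rather than one degree weaker, requires a careful analysis of the stratification that leverages the specific structure of the minimal stratum.
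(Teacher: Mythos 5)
Your proposal is correct and follows essentially the same route as the paper: identify the map with the Borel construction of the $\overline{C\G}_\R$-equivariant inclusion $\pC^{\ttau}_{ss}\hookrightarrow\pC^{\ttau}$, bound the codimension of the unstable real Harder--Narasimhan strata below by $g(r-1)$, and deduce the homology and cohomology statements via Hurewicz and universal coefficients. The codimension estimate, which you correctly single out as the only substantive step and propose to settle by a case analysis over HN types, is exactly what the paper dispatches as ``an easy exercise given the codimension formula (2.4) in \cite{B}.''
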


\begin{proof}
The codimension of all unstable strata is always greater than or equal to $g(r-1)$ (an easy exercise given the codimension formula (2.4) in \cite{B}). Therefore the induced map
$$  (\mathcal{C}_{ss}^{\ttau})_{h \overline{C\G}_\R} \rightarrow  (\pC^{\ttau})_{h \overline{C\G}_\R} \cong  B\overline{C\G}_\R$$ must be be a surjection on $\pi_k$ for $k = g(r-1)-1$ and an isomorphism for $k \leq g(r-1)-2$. The result follows from Corollary \ref{homotequivMC}, the Hurewicz Theorem, and the Universal Coefficient Theorem.
\end{proof}

\section{Topology of $BC\G_\R$}\label{Topology of BCG}

Let $\G_\R = \G(E)_\R$. In this section, we compute the Betti numbers of $BC\G_\R$ in all characteristics $p$. We begin with material that is independent of $p$ and then treat $p=2$ and $p>2$ in turn. Much of this section is adapted from calculations in \cite{B} and \cite{B3}, to which we sometimes refer for details.

Recall that $C\G$ is the group of gauge transformations of $E$ with constant determinant.  This fits into a short exact sequence
$$1 \rightarrow S\G \rightarrow C\G \rightarrow \C^* \rightarrow 1 $$
where $S\G$ is the group of gauge transformations with determinant 1.  Likewise we have a short exact sequence
\begin{equation}\label{SESCSR}
 1 \rightarrow S\G_\R \rightarrow C\G_\R \rightarrow \R^* \rightarrow 1
 \end{equation}
where $S\G_\R$ and $C\G_\R$ are the subgroups of $S\G$ and $C\G$ respectively that commute with $\ttau$. 

From the classification of $C^{\infty}$-Real vector bundles over a real curve in \cite{BHH}, it is always possible to decompose into Real subbundles
\begin{equation}\label{decompofrealbundles}
E = E' \oplus L
\end{equation} where $L$ is a Real line bundle. Define a splitting of (\ref{SESCSR}) by lifting $\lambda \in \R^*$ to the real gauge transformation which is trivial on $E'$ and scalar multiplication by $\lambda$ on $L$. This implies that $C \G_{\R}$ is isomorphic to a semi-direct product $ \R^* \ltimes S\G_\R$.

Suppose now that $E$ is endowed with a $\ttau$-equivariant Hermitian metric and let $S\mathcal{U}_\R \leq S\G_\R$ be the subgroup of elements that act unitarily. This inclusion is a homotopy equivalence, because $S\G_\R/S\mathcal{U}_\R$ can be identified with the convex space of $\tau$-compatible Hermitian metrics, so it induces a homotopy equivalence $$B S\mathcal{U}_{\R} \cong B S\G_\R.$$
For technical reasons to do with compactness, it is preferable to work with $S\mathcal{U}_{\R}$. 

Let $X$ denote a compact orientable 2-manifold of genus $\hat{g}$ with $n$ boundary components, where $2\hat{g} +n-1 = g$.  Consider the pull-back diagram of groups
\begin{equation}\label{gcdiag}
\xymatrix{ S\mathcal{U}(X , r; \tau_1,...,\tau_n) \ar[r] \ar[d] & Maps(X , SU_r) \ar[d]^{\pi}\\
\prod_{i=1}^n LSU_r^{\tau_i} \ar[r]^{\iota} &  \prod_{i=1}^n LSU_r  }
\end{equation}
where $Maps(X , SU_r)$ is the space of continuous maps from $X$ to $SU(r)$ with pointwise multiplication, $LSU_r$ is the space of continuous maps from $S^1 = \R/2 \pi \Z$ into $SU_r$, $\pi$ is restriction onto the boundary circles numbered $1$ to $n$, and $\iota$ is the product of inclusions of some choice of real loop groups $LSU_r^{\tau_i} \leq LSU_r$ that will be introduced shortly. Applying the classifying space functor yields a homotopy pull-back diagram 
\begin{equation}\label{hpd}
\xymatrix{ BS\mathcal{U}(X , r; \tau_1,...,\tau_n) \ar[r] \ar[d] & B Maps(X , SU_r) \ar[d]^{B\pi}\\
\prod_{i=1}^n BLSU_r^{\tau_i} \ar[r]^{B\iota} &  \prod_{i=1}^n BLSU_r  .}
\end{equation}

We must now describe the Real loop groups $LSU_r^{\tau_i}$. These are subgoups of $LSU_r$ and come in three types:

\begin{itemize}
\item[($\alpha$)] $LSU_r^{{\ttau}_\alpha} = LSO_r$ sitting inside $LSU_r$ in the standard way,
\item[($\beta$)] $LSU_r^{{\ttau}_\beta} = L_{-1}SO_r$ is the group of locally orientation preserving gauge transformations of a M\"obius bundle $\R^r \rightarrow M \rightarrow S^1$. It injects into $LSU_r$ via an isomorphism $M \otimes_\R \C \cong \C^r \times S^1$. 
\item[($\gamma$)] $LSU_r^{{\ttau}_\gamma} = \{ g: S^1 \rightarrow U_r |  g(\theta) = \overline{g(\theta + \pi)}\}$ where the bar means entry-wise complex conjugation. 
\end{itemize}

\begin{lem}\label{gaugegroupconstr}
For some choice of $\tau_1,...,\tau_n \in \{ \tau_\alpha, \tau_\beta,\tau_\gamma\}$, there is an isomorphism $S\mathcal{U}_\R \cong S\mathcal{U}(X , r; \tau_1,...,\tau_n) $ that induces a homotopy equivalence 
$$ BS\G_\R \cong BS\mathcal{U}_\R \cong BS\mathcal{U}(X , r; \tau_1,...,\tau_n) .$$ 
There is one real loop group of type ($\alpha$) for each real component of $\Sigma^{\tau}$ over which $E^{{\ttau}}$ is trivial,  one of type ($\beta$) for each real component for which $E^{{\ttau}}$ is nonorientable, and a positive number of type ($\gamma$) if and only if $\Sigma \setminus \Sigma^{\tau}$ is connected.
\end{lem}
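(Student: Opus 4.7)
The plan is to realise $S\mathcal{U}_\R$ as the pull-back group in \eqref{gcdiag} by cutting $\Sigma$ along a $\tau$-invariant collection of circles to produce the oriented surface with boundary $X$, and then to read off the three loop-group types from the local model of $\ttau$ near each boundary component.

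First, I would construct $X$ as follows. If $\Sigma \setminus \Sigma^{\tau}$ is disconnected ($\epsilon = 0$), take $X$ to be the closure of one of its two components, so $\partial X = \Sigma^{\tau}$ consists of the $n = a$ real circles. If $\Sigma \setminus \Sigma^{\tau}$ is connected ($\epsilon = 1$), choose a nonempty $\tau$-invariant family of smoothly embedded circles in $\Sigma \setminus \Sigma^{\tau}$ on which $\tau$ acts freely and whose union disconnects $\Sigma$, and let $X$ be the closure of one $\tau$-invariant complementary component. Then $\partial X$ is $\Sigma^{\tau}$ together with a positive number of cutting circles (which will supply the type $\gamma$ boundary). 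In either case doubling $X$ along $\partial X$ recovers $\Sigma$, and a short Euler characteristic count gives $2\hat{g} + n - 1 = g$ as required.

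Second, I would identify $S\mathcal{U}_\R$ with the pull-back group. Choose a $\ttau$-compatible unitary trivialisation of $E|_X$ (using that $X$ deformation retracts onto a $1$-complex). Restriction from $\Sigma$ to $X$ then gives an isomorphism from $S\mathcal{U}^{\ttau}$ onto the subgroup of $Maps(X, SU_r)$ whose boundary values satisfy the equivariance condition prescribed by $\ttau|_{E|_{\partial X}}$; this is exactly the pull-back \eqref{gcdiag}, with $\tau_i$ the involution of $LSU_r$ induced by $\ttau$ along the $i$th boundary circle. Matching the three types then reduces to classifying the local form of $\ttau$ on $E|_{S}$ for each boundary component $S$: on a real circle $S \subset \Sigma^{\tau}$ with $E^{\ttau}|_S$ orientable one can pick a $\ttau$-invariant orthonormal frame so that $\ttau$ acts by entrywise complex conjugation, giving the condition $g|_S \in LSO_r$ (type $\alpha$); on a real circle with $E^{\ttau}|_S$ a M\"obius bundle no such global frame exists and the best available frame forces a single sign reversal, giving $g|_S \in L_{-1}SO_r$ (type $\beta$); and on a cutting circle in the $\epsilon = 1$ case $\ttau$ identifies antipodal points of the circle together with complex conjugation of the fibre, yielding $g(\theta) = \overline{g(\theta + \pi)}$ (type $\gamma$).

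Finally, to pass to classifying spaces, I would observe that restriction to the boundary $\pi: Maps(X, SU_r) \to \prod_i LSU_r$ is a Serre fibration of topological groups (its kernel is the group of maps trivial on $\partial X$, and surjectivity is immediate from $X$ being a handlebody relative to $\partial X$), so applying the classifying space functor to \eqref{gcdiag} converts this pull-back of groups into the homotopy pull-back \eqref{hpd} of classifying spaces. Combined with the homotopy equivalence $BS\G_\R \simeq BS\mathcal{U}_\R$ noted earlier in this section, this produces the claimed homotopy equivalence. The main technical obstacle is the second step: one must carefully choose the unitary trivialisation of $E|_X$ compatibly with $\ttau$ on every component of $\partial X$ and verify that the induced loop-group involution is the advertised one, invoking the $C^{\infty}$-classification of Real bundles over $(\Sigma,\tau)$ from \cite{BHH} to conclude that the $\alpha$, $\beta$, $\gamma$ types arise in exactly the numbers stated.
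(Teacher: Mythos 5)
Your proposal is correct and follows essentially the same route as the paper, which simply cites Proposition~6.2 of \cite{B} — the cut-along-$\tau$-invariant-circles argument you reconstruct, with $U(r)$ replaced by $SU(r)$. One wording slip: in the case $\Sigma\setminus\Sigma^{\tau}$ connected, $X$ must be the closure of a complementary component that is \emph{interchanged} with its $\tau$-image (so that $\Sigma = X\cup\tau(X)$ and $X$ is a fundamental domain), not a ``$\tau$-invariant'' component — otherwise restriction to $X$ would not identify $S\mathcal{U}_{\R}$ with the unconstrained mapping group $Maps(X,SU_r)$ subject only to boundary conditions.
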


\begin{proof}
This proven the same way as (\cite{B} Proposition 6.2) except that $U(r)$ is replaced by $SU(r)$. 
\end{proof}

Our plan is to compute $H^*(BS\G(X , r; \tau_1,...,\tau_n))$ using the Eilenberg-Moore spectral sequence (EMSS) associated to (\ref{hpd}). 

\begin{lem}\label{blsu}
Over any coefficient field we have an isomorphism $ H^*(BLSU_r)  \cong \wedge(\bar{c}_2,...,\bar{c}_r) \otimes S(c_2,...,c_r) $, where the generators have degrees $|\bar{c}_{k}| = 2k-1$ and $|c_{k}| = |c_k| = 2k$.
\end{lem}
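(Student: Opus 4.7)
The plan is to identify $BLSU_r$ with the free loop space $\mathcal{L}BSU_r := \mathrm{Map}(S^1, BSU_r)$ and then compute the cohomology of the latter via the Eilenberg--Moore spectral sequence. For the identification, I would observe that evaluation at the basepoint of $S^1$ gives a short exact sequence of topological groups $1 \to \Omega SU_r \to LSU_r \to SU_r \to 1$, split by the inclusion of constant loops; applying the classifying space functor yields a fibration $B\Omega SU_r \to BLSU_r \to BSU_r$ with a section. Since $B\Omega SU_r \simeq SU_r$ via the path-loop fibration, this matches the free loop evaluation fibration $SU_r \simeq \Omega BSU_r \to \mathcal{L}BSU_r \to BSU_r$, giving a natural homotopy equivalence $BLSU_r \simeq \mathcal{L}BSU_r$.

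Next, I would realize $\mathcal{L}BSU_r$ as the homotopy pullback of the diagonal $\Delta: BSU_r \to BSU_r \times BSU_r$ against itself, since a free loop is a path together with a coincidence of endpoints. The Eilenberg--Moore spectral sequence for this pullback reads
$$ E_2 = \mathrm{Tor}_{H^*(BSU_r \times BSU_r;\F)}\bigl(H^*(BSU_r;\F),\, H^*(BSU_r;\F)\bigr) \Longrightarrow H^*(\mathcal{L}BSU_r;\F). $$
Over any field $\F$ we have $H^*(BSU_r;\F) \cong \F[c_2,\ldots,c_r]$ with $|c_k|=2k$, hence $H^*(BSU_r \times BSU_r;\F) \cong \F[c_k^{(1)}, c_k^{(2)}]_{k=2}^{r}$, and the kernel of the multiplication map is generated by the regular sequence $\{c_k^{(1)} - c_k^{(2)}\}_{k=2}^{r}$. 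The associated Koszul resolution computes
$$ \mathrm{Tor} \cong \F[c_2,\ldots,c_r] \otimes \Lambda(\bar{c}_2,\ldots,\bar{c}_r), \qquad |\bar{c}_k| = 2k-1, $$
where the $\bar{c}_k$ arise from the Koszul generators with a cohomological shift of $-1$ due to their homological degree.

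The remaining step, which I expect to be the main potential obstacle if one wants a fully self-contained treatment, is the collapse of the Eilenberg--Moore spectral sequence at $E_2$. For this I would invoke the well-known fact that the EMSS for $\mathcal{L}X$ collapses at $E_2$ whenever $H^*(X;\F)$ is a polynomial algebra: the Koszul model of $\mathrm{Tor}$ is concentrated in bidegrees $(-s,t)$ with $0 \leq s \leq r-1$, so every higher differential has a zero target for bidegree reasons. Combining this collapse statement with the $\mathrm{Tor}$ computation above yields the stated isomorphism $H^*(BLSU_r;\F) \cong \Lambda(\bar{c}_2,\ldots,\bar{c}_r) \otimes \F[c_2,\ldots,c_r]$ in every characteristic.
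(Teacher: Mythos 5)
Your route is genuinely different from the paper's. The paper works directly with the Serre spectral sequence of the basepoint-restriction fibration $SU_r \to BLSU_r \to BSU_r$, maps it into the corresponding fibration $U_r \to BLU_r \to BU_r$, imports the collapse of the latter from Proposition 4.3 of \cite{B}, and then uses surjectivity of $H^*(U_r) \to H^*(SU_r)$ together with Leray--Hirsch to conclude. Your approach instead identifies $BLSU_r$ with the free loop space $\mathcal{L}BSU_r$ and runs the Eilenberg--Moore spectral sequence of the diagonal pullback, computing $\mathrm{Tor}$ over $H^*(BSU_r \times BSU_r;\F)$ via the Koszul resolution of the regular sequence $c_k^{(1)} - c_k^{(2)}$. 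What your approach buys is self-containedness (no appeal to the earlier paper) and uniformity in the characteristic from the start, since $H^*(BSU_r;\F)$ is polynomial over every field; it also fits naturally with the Koszul--Tate/EMSS machinery the paper deploys later in \S 3, so it is arguably more in the spirit of the surrounding arguments. What the paper's approach buys is brevity and a ring-level statement obtained by comparison with an already-established computation. One caveat on the identification $BLG \simeq \mathcal{L}BG$: matching two fibrations with sections over the same base does not by itself produce an equivalence of total spaces; the clean argument is that $\mathrm{Map}(S^1, EG) \to \mathcal{L}BG$ is a principal $LG$-bundle with contractible total space. The paper itself uses this identification (for $BLSO(r)$ in Proposition \ref{oddrealloops}), so it is available to you.

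There is one step whose justification, as written, is not correct, although the conclusion is. You assert that the EMSS collapses because ``every higher differential has a zero target for bidegree reasons,'' citing that $E_2$ is concentrated in columns $-s$ with $0 \le s \le r-1$. But $d_k$ maps column $-s$ to column $-s+k$, and for $r \ge 4$ and $s \ge k$ the target column lies in the nonzero range (e.g.\ $d_2 : E_2^{-3,t} \to E_2^{-1,t-1}$), so the targets are not all zero. The correct bidegree argument uses multiplicativity: the algebra generators of $E_2$, namely the polynomial classes in column $0$ and the Koszul (exterior) classes in column $-1$, are permanent cycles because $d_k$ sends columns $0$ and $-1$ into columns $k$ and $k-1$, which are positive and hence zero; the Leibniz rule then forces all differentials to vanish. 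With that correction (or with a citation to the standard collapse theorem for free loop spaces of spaces with polynomial cohomology), your argument is complete.
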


\begin{proof}
Restricting to the basepoint determines a fibration sequence
\begin{equation}\label{sfibseq1}
 SU_r \rightarrow B LSU_r  \rightarrow BSU_r,
\end{equation}
where we have identified $B\Omega SU_r$ with $SU_r$.
The inclusion $LSU_r \hookrightarrow LU_r$ induces a morphism of fibration sequences of (\ref{sfibseq1}) into 
\begin{equation}\label{fibwithu1}
  U_r \rightarrow B L U_r \rightarrow BU_r.
  \end{equation}
It was proven  in \cite{B} Proposition  4.3 (stated for $\Z_2$ coefficients, but the proof is valid in any characteristic) that the Serre spectral sequence of (\ref{fibwithu}) collapses yielding a ring isomorphism
$$ H^*(LU_r) \cong  H^*(U_r) \otimes  H^*(BU_r).  $$ 
Because $SU(r) \subseteq U(r)$ determines a surjection on cohomology, Leray-Hirsch yields a ring isomorphism
$$ H^*( B LSU_r)  \cong H^*(SU_r) \otimes H^*(BSU_r). $$
\end{proof}

For the rest of this section we use index sets, $i \in \{1,...,n\}$, $i' \in \{2,...,n\}$, $k \in \{2,...,r \}$.  We use the notational convention that the appearance of one of these subscripts means to include the full range of that index set. For example $\wedge(\bar{c}_2,...,\bar{c}_r) \otimes S(c_2,...,c_r)  = \wedge(\bar{c}_k) \otimes S(c_k)$.  

\begin{lem}\label{aolercubalcroe}
Over any field $\F$, we have an isomorphisms
\begin{equation}\label{kunn}
H^*(\prod_{i=1}^n BLSU_r) \cong \bigotimes_{i=1}^n H^*(BLSU(r)) \cong \wedge( \bar{c}_{i,k}) \otimes S(c_{i,k})
\end{equation}
and

\begin{equation}\label{mapspace}
H^*(B Maps(X , SU_r)) \cong \frac{\wedge( \bar{c}_{i,k}) }{(c_{1,k} +....+c_{n,k})}\otimes S(c_{k}) \otimes A.
 \end{equation}
where the generators have degrees $|\bar{c}_{i,k}| = 2k-1$ and $|c_{i,k}| = |c_k| = 2k$ and $A$ is an exterior algebra with Poincar\'e series $$P_t(A) = \prod_{k=2}^r (1+t^{2k-1})^{2 \hat{g}}.$$ 
In terms of these generators, the map $$B\pi^*:  H^*(\prod_{i=1}^n BLSU_r) \rightarrow H^*(B Maps(X , SU_r)) $$ is determined by $B\pi^*(\bar{c}_{i,k}) = \bar{c}_{i,k}$, and $B\pi^*(c_{i,k}) = c_k$.
\end{lem}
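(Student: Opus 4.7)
The first isomorphism (\ref{kunn}) follows immediately from the K\"unneth theorem over $\F$ together with Lemma \ref{blsu}; under this identification the generators $\bar{c}_{i,k}$ and $c_{i,k}$ are the pullbacks of $\bar{c}_k, c_k \in H^*(BLSU_r)$ along projection onto the $i$-th factor.

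For (\ref{mapspace}), the plan is to use the adjunction $B Maps(X, SU_r) \simeq Maps(X, BSU_r)$ and exploit that $X$, being a compact orientable 2-manifold with $n \geq 1$ boundary components, deformation retracts to its 1-skeleton, hence to a wedge $\vee^g S^1$ with $g = 2\hat{g} + n - 1$. I would choose the retraction so that $n$ of the basic loops represent the boundary circles $\partial_1, \ldots, \partial_n$, which satisfy the single relation $\sum_{i=1}^n [\partial_i] = 0$ in $H_1(X; \Z)$, while $2\hat{g}$ further loops span a complementary ``genus'' summand. Under this model $Maps(X, BSU_r)$ is the homotopy pullback of the diagonal $\Delta: BSU_r \to (BSU_r)^g$ along the $g$-fold product of evaluation fibrations $LBSU_r \to BSU_r$, each of which admits a section by constant loops.

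Each such evaluation fits into the same fibration $SU_r \to LBSU_r \to BSU_r$ treated in Lemma \ref{blsu}, so $H^*(LBSU_r)$ is a free $H^*(BSU_r)$-module with basis $\wedge(\bar{c}_k)$ by Leray--Hirsch. The Eilenberg--Moore spectral sequence of the pullback square therefore collapses, yielding
$$ H^*(Maps(X, BSU_r)) \cong S(c_k) \otimes \wedge(\bar{c}_{j,k})_{j=1,\ldots,g}, $$
with $g(r-1)$ exterior generators in degrees $2k-1$ and $r-1$ polynomial generators in degrees $2k$. Partitioning the $g$ exterior generators per $k$ into $n$ ``boundary'' generators $\bar{c}_{i,k}$ (subject to the relation $\bar{c}_{1,k} + \ldots + \bar{c}_{n,k} = 0$, inherited from $\sum_i [\partial_i] = 0$) and $2\hat{g}$ ``genus'' generators assembled into the exterior algebra $A$ produces the claimed form.

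The description of $B\pi^*$ is then automatic: the restriction map $B\pi$ corresponds under the adjunction to restriction $Maps(X, BSU_r) \to (LBSU_r)^n$, which factors through projection onto the $n$ boundary loops among our chosen $g$, so $\bar{c}_{i,k}$ on the source maps to the matching boundary class in the target, while all $c_{i,k}$ (being pulled back from the shared base $BSU_r$) map to $c_k$. The main technical step I expect to handle carefully is the collapse of the Eilenberg--Moore spectral sequence together with compatible matching of the generator bases across the pullback square; the key input is the freeness of $H^*(LBSU_r)$ over $H^*(BSU_r)$ furnished by Lemma \ref{blsu}, while the relation $\sum_i \bar{c}_{i,k} = 0$ has the geometric content $c_k / \sum_i [\partial_i] = 0$.
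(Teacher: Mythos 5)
Your proof of (\ref{kunn}) is the same as the paper's (K\"unneth plus Lemma \ref{blsu}). For (\ref{mapspace}) you take a genuinely different, but correct, route. The paper restricts to the basepoint to get the fibration $SU(r)^{g} \to BMaps(X,SU_r) \to BSU_r$, maps it into the corresponding $U(r)$-fibration whose Serre spectral sequence is already known to collapse (\cite{B}, Lemma 4.4), and transfers the collapse via Leray--Hirsch using the surjection $H^*(U(r)^g) \to H^*(SU(r)^g)$; the generators and $B\pi^*$ are then read off from the wedge model exactly as you do. You instead realize $BMaps(X,SU_r)\simeq Maps(\vee^{g}S^1,BSU_r)$ as the $g$-fold fibre product of the evaluation fibrations $LBSU_r \to BSU_r$ over $BSU_r$ and run the Eilenberg--Moore spectral sequence, which collapses because Lemma \ref{blsu} makes $H^*(LBSU_r)$ free over $H^*(BSU_r)$, so $\mathrm{Tor}$ is concentrated in the zeroth column. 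Your version is self-contained modulo Lemma \ref{blsu}, avoids the detour through $U(r)$ and the external citation, and dovetails with the Koszul--Tate/EMSS machinery used immediately afterwards for the real loop groups; the paper's version is shorter given that the $U(r)$ collapse is already on record. One phrasing to tighten: you cannot choose the retraction so that all $n$ boundary circles become basic loops of the wedge --- only $n-1$ of them can be free wedge summands, the last being determined by $\sum_i[\partial_i]=0$ --- but since you impose exactly this relation on the classes $\bar c_{i,k}$ when matching generators, the count ($n-1$ boundary generators plus $2\hat g$ genus generators equals $g$) and the description of $B\pi^*$ come out right. (Your reading also confirms that the ideal in (\ref{mapspace}) should be generated by $\bar c_{1,k}+\cdots+\bar c_{n,k}$ rather than $c_{1,k}+\cdots+c_{n,k}$, consistent with the Koszul--Tate differential $\delta(z_k)$ used later.)
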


\begin{proof}
Equation (\ref{kunn}) follows from Lemma \ref{blsu} by the Kunneth Theorem.

To prove (\ref{mapspace}), first observe the homotopy equivalence
$$ X \sim \vee_{g} S^1 $$
between $X$ and a wedge of $g = 2\hat{g}+n-1$ circles. Thus
$$ BMaps(X,SU_r) \cong BMaps( \vee_{g} S^1 ,SU_r).$$
Restricting to the basepoint determines a fibration sequence
\begin{equation}\label{sfibseq}
 SU(r)^{g} \rightarrow B Maps(X , SU_r)  \rightarrow BSU(r).
\end{equation}
The inclusion $Maps(X, SU(r)) \hookrightarrow Maps(X,U(r))$ induces a morphism of fibration sequences of (\ref{sfibseq}) into 
\begin{equation}\label{fibwithu}
  U(r)^{g} \rightarrow B Maps(X , U_r)  \rightarrow BU(r).
  \end{equation}
It was proven (stated for $\Z_2$ coefficients, but the proof is valid in any characteristic) in \cite{B} Lemma  4.4 that the Serre spectral sequence of (\ref{fibwithu}) collapses yielding a ring isomorphism
$$ H^*(B Maps(X,U_r)) \cong  H^*( U(r)^{g}) \otimes  H^*(BU(r)).  $$ 
Because $SU(r) \leq U(r)$ determines a surjection on cohomology, Leray-Hirsch yields a ring isomorphism
$$ H^*( B Maps(X , SU_r))  \cong H^*( SU(r)^{g} ) \otimes H^*(BSU(r)). $$
Under the homotopy equivalence between $X$ and a wedge of circles, $(n-1)$ of the boundary circles of $X$ are sent to circles in the wedge, while the sum of the boundary circles is a boundary. The induced map on cohomology follows.
\end{proof}

The Koszul-Tate complex for the homomorphism $B\pi^*$ is identified with the bigraded complex $(K^{*,*},\delta)$ where
\begin{equation}\label{ktc}
K^{*,*} := \Gamma(z_k) \otimes \wedge(x_{i',k}) \otimes \wedge(\bar{c}_{i,k}) \otimes S(c_{i,k}) \otimes A
\end{equation}
with bidegrees and differentials

\begin{tabular}{|c|c|c|}
	\hline
generator   & bi-degree  & $\delta$-derivative \\
\hline
$\bar{c}_{i,k}$ & $(0,2k-1)$ & $0$ \\
$c_{i,k}$ & $(0,2k)$ & $0$\\
$x_{i',k}$ & $(-1,2k)$ & $c_{i',k}-c_{1,k}$ \\
$z_k$ & $(-1,2k-1)$ & $\bar{c}_{1,k}+...+\bar{c}_{n,k} $\\
\hline	
\end{tabular}

Note in particular that $K^{*,*}$ is a free extension over 
$$R^* := H^*( \prod_{i=1}^n BLSU_r ) \cong \wedge(\bar{c}_{i,k}) \otimes S(c_{i,k})$$ and the cohomology $H(K^{*,*},\delta)$ is isomorphic to $H^*(BMaps(X,SU_r))$ as a graded $R$-module, where we understand elements in $H^d(BMap(X,SU_r))$ to have bi-degree $(0,d)$. By homotopy pullback (\ref{hpd}) gives rise to a Eilenberg-Moore spectral sequence (EMSS), for which $EM_2^{*,*}$ is isomorphic as a bi-graded algebra to the homology of the complex 

\begin{equation}\label{ComplexEMSS}
 ( K^{*,*} \otimes_{R^*} H^{*}(\prod_{i=1}^n BLSU_r^{\tau_i}),~ \delta \otimes_{R^*} 1).
 \end{equation}

\subsection{Characteristic $2$}

\subsubsection{Cohomology of the loop groups over $\Z_2$.}

It follows from surjectivity into the non-fixed determinant case that the loop groops
$$ B\Omega SO_r  \rightarrow BL_{\sigma} SO_r \rightarrow BSO_r. $$
and
$$ B\Omega SU_r \rightarrow BLSU_r^{\tau} \rightarrow BSU_r $$
have Serre Spectral sequences that collapse.  Consequently,

\begin{prop}
We have isomorphisms
$$ H^*(BL_{\sigma} SO_r; \Z_2) \cong  H^*(SO_r;\Z_2) \otimes H^*(BSO_r;\Z_2)  $$
as modules over $H^*(BSO_r;\Z_2)$ and
$$ H^*(BLSU_r^{\tau};\Z_2) \cong  H^*(SU_r;\Z_2) \otimes H^*(BSU_r;\Z_2)  $$
as modules over $H^*(BSU_r;\Z_2)$.
\end{prop}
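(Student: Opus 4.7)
The plan is to deduce both isomorphisms from the collapse of the Serre spectral sequences noted immediately above the proposition, via the Leray--Hirsch theorem. First I would identify the relevant fibrations by evaluation at a basepoint, obtaining $SU_r \to BLSU_r^\tau \to BSU_r$ with fibre $B\Omega SU_r \simeq SU_r$, and analogously $SO_r \to BL_\sigma SO_r \to BSO_r$ with fibre $SO_r$. The Serre $E_2$-page for each is then $H^*(BSU_r;\Z_2) \otimes H^*(SU_r;\Z_2)$, respectively $H^*(BSO_r;\Z_2) \otimes H^*(SO_r;\Z_2)$, and collapse produces the desired equality of Poincar\'e series.

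To upgrade collapse to a genuine module isomorphism, Leray--Hirsch requires a lift of a fibre basis to cohomology classes in the total space. Here I would exploit the natural subgroup inclusions $LSU_r^\tau \hookrightarrow LU_r^\tau$ and $L_\sigma SO_r \hookrightarrow L_\sigma O_r$ arising from $SU_r \subseteq U_r$ and $SO_r \subseteq O_r$. Collapse for the non-fixed-determinant fibrations is already established in \cite{B} (Proposition 4.3 and Lemma 4.4, as used in the proof of Lemma \ref{blsu}), so one can lift generators of $H^*(U_r;\Z_2)$ and $H^*(O_r;\Z_2)$ to classes in $H^*(BLU_r^\tau;\Z_2)$ and $H^*(BL_\sigma O_r;\Z_2)$, respectively. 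Pulling these lifts back along the induced maps $BLSU_r^\tau \to BLU_r^\tau$ and $BL_\sigma SO_r \to BL_\sigma O_r$ yields classes in $H^*(BLSU_r^\tau;\Z_2)$ and $H^*(BL_\sigma SO_r;\Z_2)$ whose fibre restrictions are, by naturality, the images of the $U_r$ and $O_r$ generators under the surjective restriction maps $H^*(U_r;\Z_2) \to H^*(SU_r;\Z_2)$ and $H^*(O_r;\Z_2) \to H^*(SO_r;\Z_2)$. Selecting a subset of lifted classes that restrict to a basis of the fibre cohomology, Leray--Hirsch then yields the stated module isomorphisms.

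The main obstacle beyond routine bookkeeping is verifying that the comparison map of fibrations is genuinely compatible on both base and fibre, which in turn requires checking that the involutions used to cut out the real loop groups ($\tau$ on $LSU_r$, the M\"obius-twist involution on $LSO_r$, etc.) restrict correctly to the subgroups $SU_r \subseteq U_r$ and $SO_r \subseteq O_r$. In each case this follows directly from the definitions recalled earlier in the section, since the determinant-preserving condition is stable under the relevant involutions. Once this compatibility is in hand, the entire argument becomes a parallel of the proof of Lemma \ref{blsu} with the roles of $U_r$ and $O_r$ played, respectively, by $SU_r$ and $SO_r$.
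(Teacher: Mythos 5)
Your proposal is correct and follows essentially the same route as the paper, which justifies the collapse in a single line by exactly this comparison with the non-fixed-determinant real loop groups from \cite{B}, followed by Leray--Hirsch using surjectivity of $H^*(U_r;\Z_2)\to H^*(SU_r;\Z_2)$ and $H^*(O_r;\Z_2)\to H^*(SO_r;\Z_2)$. The only slip is bibliographic: the collapse you need in the non-fixed-determinant case is for the real loop groups $BLO_r$, $BL_{-1}O_r$ and $BLU_r^{\tau_\gamma}$ (established in the corresponding sections of \cite{B}), not Proposition 4.3 and Lemma 4.4 of \cite{B}, which concern the ordinary loop group $BLU_r$ and $B\,Maps(X,U_r)$.
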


\begin{cor}
$H^*(BL_{\sigma} SO_r;\Z_2)$ is a free module over $H^*(BSU_r;\Z_2)$ with Poincar\'e polynomial
$$ P_t(BSU_r) \prod_{k=2}^{r} (1+t^{k-1})(1+t^{k}) , $$
and $H^*(BLSU_r^{\tau};\Z_2)$ is a free module over $H^*(BSU_r;\Z_2)$ with Poincar\'e polynomial
$$  P_t(BSU_r) \prod_{k=2}^{r} (1+t^{2i-1}) .$$
\end{cor}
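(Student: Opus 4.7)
The plan is to combine the collapsing Serre spectral sequences established in the previous proposition with standard $\Z_2$-Poincar\'e polynomial computations for $SO_r$, $SU_r$, and their classifying spaces, and then, in the $SO_r$ case, transfer scalars from $H^*(BSO_r;\Z_2)$ to $H^*(BSU_r;\Z_2)$.

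For $BLSU_r^{\tau}$ the previous proposition directly gives freeness over $H^*(BSU_r;\Z_2)$ with fibre basis $H^*(SU_r;\Z_2)\cong \wedge(x_3,x_5,\ldots,x_{2r-1})$. Multiplying by $P_t(BSU_r;\Z_2)$ and using $P_t(SU_r;\Z_2)=\prod_{k=2}^{r}(1+t^{2k-1})$ yields the claimed formula immediately, so almost nothing to do here.

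For $BL_\sigma SO_r$ the previous proposition gives freeness over $H^*(BSO_r;\Z_2)$ with fibre basis $H^*(SO_r;\Z_2)$ of Poincar\'e polynomial $\prod_{k=1}^{r-1}(1+t^k)=\prod_{k=2}^{r}(1+t^{k-1})$. To upgrade to freeness over the subring $H^*(BSU_r;\Z_2)$, I would show that $H^*(BSO_r;\Z_2)=\Z_2[w_2,\ldots,w_r]$ is itself free over $H^*(BSU_r;\Z_2)=\Z_2[c_2,\ldots,c_r]$ with basis $\{w_2^{\epsilon_2}\cdots w_r^{\epsilon_r}:\epsilon_i\in\{0,1\}\}$. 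The map $BSO_r\to BSU_r$ induced by the standard inclusion $SO_r\hookrightarrow SU_r$ is complexification, so the pull-back of the universal bundle is $E\oplus E$ as an underlying real bundle; combining $w(E\oplus E)=w(E)^2$, the identity $c_k\equiv w_{2k}\pmod 2$ for complex bundles, and Frobenius squaring in characteristic $2$, gives the Wu-type formula $c_k\mapsto w_k^2$. This realises $\Z_2[w_2,\ldots,w_r]$ as the claimed free $\Z_2[w_2^2,\ldots,w_r^2]$-module of graded rank $\prod_{k=2}^{r}(1+t^k)$, and freeness of $H^*(BL_\sigma SO_r;\Z_2)$ over $H^*(BSU_r;\Z_2)$ follows by transitivity. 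Its Poincar\'e series is then the product of the two graded ranks, which simplifies after multiplication by $P_t(BSU_r;\Z_2)$ to the stated $P_t(BSU_r;\Z_2)\prod_{k=2}^{r}(1+t^{k-1})(1+t^k)$.

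The main obstacle I anticipate is not the computation itself but the bookkeeping: one has to verify that the $H^*(BSU_r;\Z_2)$-module structure implicit in the statement is the one induced by the composite $BL_\sigma SO_r\to BSO_r\to BSU_r$, and is compatible with whatever module structure will be exploited downstream in the Eilenberg--Moore computation for $BC\G_\R$. Once this compatibility is nailed down, the Wu identification $c_k\mapsto w_k^2$ is routine and the remainder of the argument is direct Poincar\'e-series arithmetic.
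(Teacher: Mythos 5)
Your proposal is correct and follows the same route the paper intends: the collapse of the two Serre spectral sequences gives freeness over $H^*(BSO_r;\Z_2)$ resp.\ $H^*(BSU_r;\Z_2)$, and the only extra content of the corollary is the change of base ring via $c_k\mapsto w_k^2$, which you supply correctly (including the check that the module structure is the one coming from $L_\sigma SO_r\to LSU_r\to SU_r$, which is what the Eilenberg--Moore argument later uses). The Poincar\'e-series arithmetic matches the stated formulas, modulo the paper's typo $t^{2i-1}$ for $t^{2k-1}$ in the second product.
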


\subsubsection{ Cohomology of $BC\G_\R$ over $\Z_2$}

\begin{thm}\label{surjf}
The inclusion $C\G_\R \leq \G_\R$ induces a surjection in cohomology $$ H^*(B\G_\R; \Z_2) \rightarrow  H^*(BC\G_\R; \Z_2).$$
\end{thm}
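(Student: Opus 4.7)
The plan is to study the Serre spectral sequence of the fibration
\[
BC\G_\R \xrightarrow{\;i\;} B\G_\R \xrightarrow{\;p\;} B\overline{\G(1)}_\R
\]
coming from the short exact sequence (\ref{defineCG}), and to show that it collapses at $E_2$ with trivial local system on $\Z_2$-cohomology of the fiber. The surjectivity of $i^*$ in the statement will then follow from the standard edge-homomorphism argument. By Lemma \ref{homtype1}, the base is homotopy equivalent to $(S^1)^g$, so $H^*(B\overline{\G(1)}_\R; \Z_2) = \wedge(a_1, \ldots, a_g)$ is an exterior algebra on $g$ degree-one generators.

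First I would construct a group-theoretic section of (\ref{defineCG}) using the Whitney decomposition $E = E' \oplus L$ from (\ref{decompofrealbundles}). Fix $p_0 \in \Sigma^\tau$, and embed
\[
\G(L)^{p_0}_\R := \{ f \in \G(L)_\R : f(p_0)=1 \} \hookrightarrow \G_\R
\]
as transformations acting trivially on $E'$ and by scalar multiplication by $f$ on $L$. The composition with the quotient $\G_\R \twoheadrightarrow \overline{\G(1)}_\R$ is an isomorphism, with inverse $[f] \mapsto f/f(p_0)$, producing a group-level section $\sigma : \overline{\G(1)}_\R \to \G_\R$. Passing to classifying spaces, $B\sigma$ lifts each $a_i$ to a class $\tilde{a}_i \in H^1(B\G_\R; \Z_2)$, and by the Leibniz rule, the entire base row $E_r^{*,0}$ survives to $E_\infty$.

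For the fiber column, I would argue collapse via a Poincar\'e series count. The central $\R^*$ inside $\G_\R$ (global scalars on $E$) acts by conjugation trivially up to homotopy, since the identity component $\R_{>0}$ is contractible and the remaining component $-1 \in \R^*$ acts by an involution that is trivial on $\Z_2$-cohomology. The analogous statement holds for the conjugation action of $\overline{\G(1)}_\R$ on $BC\G_\R$ arising from the semi-direct product $\G_\R \cong C\G_\R \rtimes \sigma(\overline{\G(1)}_\R)$: identity components are contractible, and the $\pi_0 = \Z^g$ action acts by signs which vanish mod $2$. Thus all relevant local systems are trivial, and the auxiliary fibrations $BS\G_\R \to BC\G_\R \to B\R^*$ and $BS\G_\R \to B\G_\R \to B\G(1)_\R$ admit sections (constructed by the same Whitney-decomposition recipe restricted to $L$), forcing collapse of their Serre spectral sequences. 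This yields the identity
\[
P_t(B\G_\R; \Z_2) = (1+t)^g \cdot P_t(BC\G_\R; \Z_2),
\]
which matches the total dimension of the $E_2$-page of the main spectral sequence; collapse at $E_2$ and hence surjectivity of $i^*$ follow.

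The main obstacle is verifying the collapses of the auxiliary spectral sequences and the accompanying Poincar\'e series identity rigorously. The sign-triviality of the monodromies is formal, but confirming collapse requires either working through the explicit splittings for each auxiliary fibration or performing a direct EMSS computation of $H^*(B\G_\R; \Z_2)$ modeled on that for $BS\G_\R$ developed elsewhere in this section (replacing $SU_r$ by $U_r$ throughout in diagram (\ref{hpd})); the latter approach has the advantage that the EMSS machinery is already in place, and the comparison is then purely bookkeeping.
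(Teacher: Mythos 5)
Your high-level mechanism is the right one: over the base $(S^1)^g$, the Poincar\'e series identity $P_t(B\G_\R;\Z_2)=(1+t)^g\,P_t(BC\G_\R;\Z_2)$ really does force the degeneration that makes restriction to the fibre surjective (this is exactly the mechanism of Lemma \ref{savetroub}). But your route to that identity has a genuine gap. The load-bearing claim is that the group-level splittings of (\ref{defineCG}) and (\ref{SESCSR}) ``force collapse'' of the auxiliary Serre spectral sequences. A section of $BG\to BQ$ only shows that $H^*(BQ)\to H^*(BG)$ is split injective, i.e.\ that the base row survives; it says nothing about surjectivity of $H^*(BG)\to H^*(BN)$, which is the fibre-column statement you actually need (for split extensions the restriction to the normal subgroup frequently fails to be surjective). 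Nor is triviality of the local systems formal: the $\pi_0(\overline{\G(1)}_\R)\cong\Z^g$ monodromy on $H^*(BC\G_\R;\Z_2)$ is induced by conjugation by elements \emph{outside} $C\G_\R$, so Segal's theorem on inner automorphisms does not apply, and the assertion that it ``acts by signs'' presupposes the explicit computation you are deferring. (Your section also requires a real point $p_0\in\Sigma^\tau$, so it breaks down when $a=0$, a case the theorem covers.)

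The proposed fallback does not close the gap either: an EMSS computation with $U_r$ in diagram (\ref{hpd}) reproduces the already-known $P_t(B\G_\R;\Z_2)$ from \cite{B}, whereas the missing quantity is $P_t(BC\G_\R;\Z_2)$ — equivalently $P_t(BS\G_\R;\Z_2)$ together with the (again unproved) collapse of $BS\G_\R\to BC\G_\R\to B\R^*$. The paper's proof avoids every one of these issues: Lemma \ref{savetroub} uses the minimal cell structures on $(S^1)^g$ and $\R P^\infty$ to obtain the \emph{unconditional} inequalities $P_t(B\G_\R)\le (1+t)^g P_t(BC\G_\R)$ and $P_t(BC\G_\R)\le P_t(BS\G_\R)/(1-t)$, with equality in the first precisely when the theorem holds — no sections and no local-system analysis — and then the EMSS of the $SU_r$-pullback supplies the upper bound $P_t(BS\G_\R)\le P_t(EM_2^{*,*})$ whose explicit value closes the sandwich against the known $P_t(B\G_\R)$. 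If you want to salvage your approach, the piece you must actually supply is that fixed-determinant EMSS bound on $P_t(BS\G_\R;\Z_2)$, not another computation of $P_t(B\G_\R;\Z_2)$.
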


The short exact sequence  (\ref{defineCG}) gives rise to a fibration sequence
\begin{equation}\label{fibrseq}
 BC\G_\R \rightarrow B \G_\R \rightarrow B\overline{\G(1)}_\R.
 \end{equation}
  
We can save some work by using the following lemma. For a formal power series $p(t) = \sum_{i=0}^{\infty} a_i t^i$ and $q(t) = \sum_{i=0}^{\infty} b_i t^i$, introduce the partial order $ p(t) \leq q(t)$ if and only if $ a_i \leq b_i$ for all $i \in \{0,1,2,...\}$.  

\begin{lem}\label{savetroub}
Suppose that $F \rightarrow E \rightarrow B$ is a Serre fibration such that $H^*(F)$ and $H^*(B)$ are finite dimensional in every degree and $B$ is homotopy equivalent to a connected cell complex such that for every $d\geq 0$, the number of $d$-cells equals $\dim(H^d(B))$.  Then 
\begin{equation}\label{ppp}
 P_t(E) \leq P_t(F) P_t(B)
 \end{equation}
with equality if and only if $H^*(E) \rightarrow H^*(F)$ is surjective.
\end{lem}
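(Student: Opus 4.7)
The natural tool is the cohomological Serre spectral sequence
$$E_2^{p,q} = H^p(B; \mathcal{H}^q(F)) \Rightarrow H^{p+q}(E),$$
where $\mathcal{H}^q(F)$ denotes the local coefficient system of fibrewise cohomology. The role of the minimal cell-decomposition hypothesis on $B$ is to provide a dimension bound for the $E_2$ page that is valid for arbitrary monodromy: for any local system $\mathcal{L}$ of finite rank on $B$, the twisted cellular cochain complex satisfies $\dim_{\F} C^p(B; \mathcal{L}) = (\#p\text{-cells}) \cdot \dim \mathcal{L} = \dim H^p(B) \cdot \dim \mathcal{L}$, hence $\dim H^p(B; \mathcal{L}) \leq \dim H^p(B) \cdot \dim \mathcal{L}$. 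Applied to $\mathcal{L} = \mathcal{H}^q(F)$ and combined with the standard chain
$$P_t(E) \;\leq\; \sum_{p,q}\dim E_\infty^{p,q}\, t^{p+q} \;\leq\; \sum_{p,q}\dim E_2^{p,q}\, t^{p+q},$$
this immediately produces inequality~(\ref{ppp}).

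For the equality statement I plan to argue both directions through the spectral sequence. Suppose first that $H^*(E) \to H^*(F)$ is surjective. The image of this restriction lies in the monodromy invariants $H^*(F)^{\pi_1(B)}$, so surjectivity already forces the monodromy action on $H^*(F)$ to be trivial, giving $E_2^{p,q} \cong H^p(B) \otimes H^q(F)$. Lifting a basis of $H^*(F)$ to classes in $H^*(E)$ produces permanent cycles in every column $(0,q)$; together with the permanent cycles $H^*(B)$ in the bottom row, multiplicativity of the spectral sequence differentials (which are derivations and $H^*(B)$-linear) forces collapse at $E_2$, yielding $P_t(E) = P_t(B) P_t(F)$. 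This step is essentially the Leray--Hirsch theorem.

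Conversely, if equality in~(\ref{ppp}) holds, then each of the three inequalities above must be tight. Tightness of the rightmost gives $\dim H^p(B; \mathcal{H}^q F) = \dim H^p(B) \cdot \dim H^q(F)$ for all $p,q$; specializing to $p=0$ yields $H^q(F)^{\pi_1(B)} = H^q(F)$, so the monodromy is trivial. Tightness of the middle inequality forces $E_\infty = E_2$, whence $E_\infty^{0,q} = H^q(F)$, and the edge homomorphism $H^q(E) \twoheadrightarrow E_\infty^{0,q} \hookrightarrow H^q(F)$, which coincides with the restriction to the fibre, is therefore surjective. The point of the lemma that requires the most care is really the local-coefficient dimension bound $\dim H^p(B; \mathcal{L}) \leq \dim H^p(B) \cdot \dim \mathcal{L}$; this is exactly what the minimal cell-structure hypothesis buys us, and is what lets the argument run without any assumption of trivial monodromy to begin with.
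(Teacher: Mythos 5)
Your argument is correct and follows essentially the same route as the paper: both use the Serre spectral sequence, exploit the minimal cell structure to bound the $E_2$ (equivalently $E_1$) page by $\dim H^p(B)\cdot\dim H^q(F)$ irrespective of monodromy, read off surjectivity from $E_\infty^{0,q}\cong H^q(F)$ in the equality case, and invoke Leray--Hirsch for the converse. You merely spell out the local-coefficient dimension bound and the Leray--Hirsch collapse in more detail than the paper does.
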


\begin{proof}
The $E_2$ page of the Serre spectral sequence is $E_2^{p,q} = H^p( B; H^q(F))$ which is the cohomology of a local system. However, using the cellular decomposition on $B$ we have $E_1^{p,q} = H^p(B) \otimes H^q(F)$.  It follows that $\dim E_{\infty}^{p,q} \leq  \dim ( H^p(B) \otimes H^q(F))$ which implies (\ref{ppp}). Equality only occurs if $E_{\infty}^{p,q}  \cong H^p(B) \otimes H^q(F)$ for all $p,q$ which implies that $E_{\infty}^{0,q}  \cong H^q(F)$ so that $H^*(E) \rightarrow H^*(F)$ is surjective. The converse is simply the Leray-Hirsch Theorem. 
\end{proof} 

By Lemma \ref{homtype1}, the base of (\ref{fibrseq}) is homotopy equivalent to $(S^1)^g$ which admits a cell decomposition satisfying the hypotheses of Lemma \ref{savetroub}.  The Poincar\'e series of  $B \G_\R$ was worked out in (\cite{B} Theorem 6.1)
\begin{eqnarray*}
P_t(B \G_\R;\Z_2) &=& \frac{1-t^{2r}}{(1+t^r)^{a}} \prod_{k=1}^r \frac{(1+t^k)^{2a} (1+t^{2k-1})^{g+1-a}}{(1-t^{2k})^2} \\ 
&=& \frac{(1+t)^g}{1-t} \prod_{k=2}^r \frac{(1+t^{k-1})^a(1+t^k)^a(1+t^{2k-1})^{g+1-a}}{(1-t^{2k})(1-t^{2k-2})}
\end{eqnarray*}
where $a$ is the number of real circles in $(\Sigma,\tau)$. Thus to prove Theorem \ref{surjf} it suffices to show that
\begin{equation}\label{ineqiseq2}
 P_t(B C \G_\R) \leq P_t(B\G_\R) / P_t( B\overline{\G(1)}_\R) = \frac{1}{1-t} \prod_{k=2}^r \frac{(1+t^{k-1})^a(1+t^k)^a(1+t^{2k-1})^{g+1-a}}{(1-t^{2k})(1-t^{2k-2})}
\end{equation}

The short exact sequence (\ref{SESCSR}) determines a fibration sequence $ BS\G_\R \rightarrow BC\G_{\R} \rightarrow B\R^*$ that also satisfies the conditions of Lemma \ref{savetroub} so we find that 

\begin{equation}\label{ineqiseq}
P_t(B C\G_\R) \leq P_t(B S\G_\R) P_t(B \R^*) = P_t(B S\G_\R)/ (1-t).
\end{equation}
Therefore to prove Theorem \ref{surjf} it suffices to prove the following.

\begin{prop}
The cohomology ring $H^*(BS\G_\R;\Z_2)$ has Poincar\'e series 
\begin{equation}\label{formulaforBSGR}
 P_t(BS\G_\R;\Z_2) = \prod_{k=2}^r \frac{(1+t^{k-1})^a(1+t^{k})^a (1+t^{2k-1})^{g+1-a}} {(1-t^{2k})(1-t^{2k-2})}
 \end{equation}
where $a = \pi_0(\Sigma^{\tau})$ and $g$ is the genus of $\Sigma$. 
\end{prop}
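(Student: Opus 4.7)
The strategy is to sandwich $P_t(BS\G_\R;\Z_2)$ between matching upper and lower bounds given by (\ref{formulaforBSGR}). The upper bound will come from the Eilenberg-Moore spectral sequence (EMSS) of the homotopy pullback (\ref{hpd}); the lower bound will come from the Serre spectral sequence of the fibration $BS\G_\R \to B\G_\R \to B\G(1)_\R$ associated to the short exact sequence $1 \to S\G_\R \to \G_\R \to \G(1)_\R \to 1$.

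For the lower bound I would apply Lemma \ref{savetroub} to this fibration. Since $\G(1)_\R$ is isomorphic to $\overline{\G(1)}_\R \times \R^*$, Lemma \ref{homtype1} gives $B\G(1)_\R \simeq (S^1)^g \times B\R^*$, and the standard product cell structure on $(S^1)^g \times \R P^\infty$ satisfies the minimality hypothesis of Lemma \ref{savetroub}, with $P_t(B\G(1)_\R;\Z_2) = (1+t)^g/(1-t)$. Substituting the formula for $P_t(B\G_\R;\Z_2)$ recalled just above (\ref{ineqiseq2}) from \cite{B} Theorem 6.1 into the resulting inequality $P_t(B\G_\R) \leq P_t(BS\G_\R) \cdot P_t(B\G(1)_\R)$ rearranges exactly to $P_t(BS\G_\R;\Z_2) \geq $ the right-hand side of (\ref{formulaforBSGR}).

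For the matching upper bound I would run the EMSS of (\ref{hpd}), using Lemma \ref{gaugegroupconstr} to identify $BS\G_\R$ with $BS\mathcal{U}(X, r; \tau_1,\dots,\tau_n)$. The $E_2$-page is the cohomology of the complex (\ref{ComplexEMSS}), with the fibre cohomology rings $H^*(BLSU_r^{\tau_i};\Z_2)$ substituted via the preceding corollary: the Poincaré contribution is $\prod_{k=2}^r (1+t^{k-1})(1+t^k)$ for each of the $a$ factors of type ($\alpha$) or ($\beta$) and $\prod_{k=2}^r (1+t^{2k-1})$ for each of the $n-a$ factors of type ($\gamma$), where $2\hat{g} + n - 1 = g$. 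The differentials $\delta x_{i',k} = c_{i',k} - c_{1,k}$ and $\delta z_k = \sum_i \bar c_{i,k}$ identify the fibre Chern classes across the product and impose the corresponding sum-of-odd-classes relations; combining with the $A$-factor $\prod_k (1+t^{2k-1})^{2\hat{g}}$ and the denominator contributions $\prod_k (1-t^{2k})^{-1}(1-t^{2k-2})^{-1}$ from the surviving polynomial generators and their Koszul-Tate duals yields exactly the right-hand side of (\ref{formulaforBSGR}). Since this coincides with the lower bound, the EMSS collapses at $E_2$ and equality holds.

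The main obstacle is the Koszul-Tate bookkeeping in the upper-bound step: one must trace how the differentials $\delta x_{i',k}$ and $\delta z_k$ propagate through the tensor product with the three different types of fibre cohomology rings and verify that the resulting graded quotient has the stated Poincaré polynomial. The calculation closely parallels \cite{B} Theorem 6.1, with the main substantive simplification being that $H^*(BSU_r;\Z_2)$ contains no degree-one Chern class, so the $k=1$ Koszul-Tate generators $\bar c_{i,1}, c_{i,1}, x_{i',1}, z_1$ present in the $U_r$ version are absent here — which is precisely why the product in (\ref{formulaforBSGR}) begins at $k=2$.
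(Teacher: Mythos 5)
Your proposal is correct and follows essentially the same route as the paper: the upper bound is the same Eilenberg--Moore computation of $EM_2^{*,*}$ for the homotopy pullback (\ref{hpd}) (including the correct accounting of the $k=1$ generators being absent), and the lower bound is the same appeal to Lemma \ref{savetroub} together with the known series $P_t(B\G_\R;\Z_2)$, which you package as the single fibration $BS\G_\R \to B\G_\R \to B\G(1)_\R$ where the paper factors it through $BC\G_\R \to B\G_\R \to B\overline{\G(1)}_\R$ and $BS\G_\R \to BC\G_\R \to B\R^*$. One small caution: the coefficientwise partial order on power series is not preserved under division by $(1+t)^g/(1-t)$, whose inverse has negative coefficients, so the ``rearranges exactly'' step in your lower bound should instead be run multiplicatively --- the EMSS upper bound times $(1+t)^g/(1-t)$ is squeezed between $P_t(B\G_\R)$ on both sides, forcing equality throughout --- a wrinkle that the paper's own phrase ``the reverse inequality was already known'' glosses over in the same way.
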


\begin{proof}

We refer the reader to (\cite{B} Appendix A) or McLeary (\cite{M} 7.1) for background on the Eilenberg-Moore spectral sequence.

Identify $BS\G_{\R} = BS\G(\hat{g}, n; {\ttau}_1,...,{\ttau}_n)$ from the homotopy pull-back diagram (\ref{hpd}). The associated Eilenberg-Moore spectral sequence  $EM_r^{*,*}$ converges to $H^*(BS\G_\R)$. The second page $EM_2^{*,*}$, equals the cohomology of the differential bi-graded algebra  $ (K^{*,*} \otimes_{R^*} M^*, \delta \otimes 1) $
where 
\begin{itemize}
	\item $(K^{*,*}, \delta)$ is the Koszul-Tate complex (\ref{ktc}),
	\item $M^* = M^{0,*} := \bigotimes_{i=1}^n H^*(BLSU(r)^{{\ttau}_i})$, and
	\item $R^* = R^{0,*}:= \bigotimes_{i=1}^n H^*(BLSU(r)) =   \wedge( \bar{c}_{i,k}) \otimes S(c_{i,k}).$
\end{itemize}

Applying Lemma \ref{aolercubalcroe}, we have an isomorphism of graded $R^*$-modules
$$M^* \cong V \otimes S(c_{i,k})$$
where $V$ is a graded vector space with Poincar\'e series $$P_t(V) = \prod_{k=2}^{r} (1+t^{k-1})^a(1+t^{k})^a (1+t^{2k-1})^{n-a}.$$

We have an isomorphism  $K^{*,*} \otimes_{R^*} M^* \cong \Gamma(z_k) \otimes V \otimes \wedge(x_{i',k}) \otimes S(c_{i,k}) \otimes A$ 
where $\delta(V) = \delta(A) = \delta(c_{i,k}) = \delta(z_k) = 0$ and $\delta(x_{i',k}) = c_{i',k} +c_{1,k}$. Therefore

$$ EM_2^{*,*} = (K^{*,*} \otimes_{R^*} M^*, \delta \otimes 1) \cong  \Gamma(z_k) \otimes V \otimes A \otimes S(c_k). $$

Thus $EM_2^{*,*}$ has Hilbert series with respect to the total grading equal to 
$$  P_t(EM_2^{*,*}) = \prod_{k=2}^r \frac{(1+t^{k-1})^a(1+t^{k})^a (1+t^{2k-1})^{n-a} (1+t^{2k-1})^{2 \hat{g}}}{(1-t^{2k})(1-t^{2k-2})},$$
which equals the right hand side of (\ref{formulaforBSGR}) because $ g = 2 \hat{g} +n - 1$.  It follows then that
$$  P_t(BS\G_\R; \Z_2) \leq \prod_{k=2}^r \frac{(1+t^{k-1})^a(1+t^{k})^a (1+t^{2k-1})^{g+1-a}} {(1-t^{2k})(1-t^{2k-2})}.$$
Since the reverse inequality was already known, the equality (\ref{formulaforBSGR}) holds and the spectral sequence collapses at $EM_2^{*,*}$.

\end{proof}

Consequently both inequalities (\ref{ineqiseq2}) and (\ref{ineqiseq}) are equalities, yielding

\begin{cor}
The cohomology ring $H^*(BC\G_\R;\Z_2)$ has Poincar\'e series 
\begin{equation}\label{formulaforBSGR}
 P_t(BC\G_\R;\Z_2) =   \frac{1}{1-t}\prod_{k=2}^r \frac{(1+t^{k-1})^a(1+t^{k})^a (1+t^{2k-1})^{(g+1-a)}} {(1-t^{2k})(1-t^{2k-2})}
 \end{equation}
where $a = \pi_0(\Sigma^{\tau})$ and $g$ is the genus of $\Sigma$. 
\end{cor}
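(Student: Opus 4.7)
The plan is to pinch $P_t(BC\G_\R;\Z_2)$ between matching upper and lower bounds coming from two fibration sequences involving $C\G_\R$, and then observe that both bounds are given by the formula claimed in the statement.

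For the upper bound, I would apply Lemma \ref{savetroub} to the fibration $BS\G_\R \to BC\G_\R \to B\R^*$ obtained by delooping (\ref{SESCSR}). Since $\R^* \cong \Z/2 \times \R_{>0}$, the base $B\R^*$ is homotopy equivalent to $B\Z/2$, which carries the standard CW structure with one cell in each non-negative dimension and mod-$2$ Poincar\'e series $1/(1-t)$, so the hypotheses of Lemma \ref{savetroub} are met. The lemma then gives $P_t(BC\G_\R;\Z_2) \leq P_t(BS\G_\R;\Z_2)/(1-t)$, and substituting the value of $P_t(BS\G_\R;\Z_2)$ from the preceding proposition produces exactly the product expression asserted in the corollary.

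For the matching lower bound, I would apply Lemma \ref{savetroub} to the fibration $BC\G_\R \to B\G_\R \to B\overline{\G(1)}_\R$ extracted from (\ref{defineCG}). By Lemma \ref{homtype1}, the base is homotopy equivalent to the $g$-torus $(S^1)^g$, whose standard product cell decomposition has $\binom{g}{d}$ cells in dimension $d$, matching $\dim H^d((S^1)^g;\Z_2)$; the lemma therefore applies, and rearranging yields $P_t(BC\G_\R;\Z_2) \geq P_t(B\G_\R;\Z_2)/(1+t)^g$. A direct calculation using the formula for $P_t(B\G_\R;\Z_2)$ recalled from \cite{B} shows that this lower bound coincides with the upper bound.

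Since the two bounds agree, equality holds throughout and the asserted Poincar\'e series formula follows. A welcome byproduct of equality in the second fibration is Theorem \ref{surjf} itself, via the equality clause of Lemma \ref{savetroub}. I do not anticipate any genuine obstacle in this argument: all the hard work has already been done in the Eilenberg-Moore spectral sequence computation that produced $P_t(BS\G_\R;\Z_2)$, and what remains is a routine algebraic comparison of two product expressions.
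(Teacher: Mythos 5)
Your proposal is correct and follows essentially the same route as the paper: the paper likewise pinches $P_t(BC\G_\R;\Z_2)$ between the upper bound from Lemma \ref{savetroub} applied to $BS\G_\R \to BC\G_\R \to B\R^*$ and the lower bound from the same lemma applied to $BC\G_\R \to B\G_\R \to B\overline{\G(1)}_\R$, using the Eilenberg--Moore computation of $P_t(BS\G_\R;\Z_2)$ and the known formula for $P_t(B\G_\R;\Z_2)$. The observation that equality in the second fibration delivers Theorem \ref{surjf} is also exactly how the paper proceeds.
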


\subsection{ Characteristic $\neq 2$}

Throughout this subsection, let $\F$ denote a field of odd or zero characteristic. Cohomology will always be taken with coefficients $\F$.  Since $S\G_\R \leq C\G_\R$ has index two, there is a natural identification of $H^*(BC\G_\R; \F)$ with the invariant ring $H^*( BS\G_\R;\F)^{C\G_\R/S\G_\R }$ which we will exploit in our calculation.

The action of $C\G_\R/S\G_\R \cong \Z/2$ on $H^*(BS\G_\R;\F)$ is induced by a group automormorphism of $S\G_\R$ determined by conjugating by an element  $g \in C\G_\R \setminus S\G_R$. Using the real decomposition $E = E' \oplus L$ described in (\ref{decompofrealbundles}), we may choose $g$ to be the gauge transformation which acts trivially on $E'$ and by $-1$ on $L$. This automorphism extends naturally to the diagram (\ref{gcdiag}) and therefore it acts on the spectral sequence (\ref{ComplexEMSS}). This automorphism restricts to an inner automorphism on $Maps(X, U_r)$ and $\prod_{i=1}^n LU_r$. By a theorem of Segal (\cite{S} \S 3), the induced action of $C\G_\R/S\G_\R$ on $BMaps(X, U_r)$ and on $\prod_{i=1}^n BLU_r$ is homotopically trivial so in particular $C\G_\R/S\G_\R$ acts trivially on $K^{*,*}$ and $R^*$. Therefore the only non-trivial contribution to the $C\G_\R/S\G_\R$ action on (\ref{ComplexEMSS}) comes from the action on $H^*(LSU_r^{\tau_i};\F)$ which we investigate next.

\subsubsection{ Cohomology of Real loop groups in odd or zero characteristic}

\begin{prop}\label{oddrealloops}
Let $r$ be a positive integer and let $r = 2r'+1$ or $r=2r'$ depending on whether $r$ is even or odd.
We have isomorphism
$$ H^*(BLSO(2r'+1);\F) \cong \wedge( \bar{p}_1,...,\bar{p}_{r'} ) \otimes S(p_1,...,p_{r'})$$
and 
$$ H^*(BLSO(2r');\F) \cong \wedge(\bar{p}_1,..., \bar{p}_{r'-1}, \bar{e}_{r'}) \otimes S(p_1,...,p_{r'-1}, e_{r'}) .$$
with degrees $|p_k| = 4k$, $|\bar{p}_k| = 4k-1$, $|e_{r'}| = 2r'$, and $|\bar{e}_{r'}| = 2r'-1$. 
In the even rank case denote $p_{r'} = e_{r'}^2$ and $\bar{p}_{r'} = 2 \bar{e}_{r'}e_{r'}$ for convenience. The inclusion $\iota: LSO(r) \hookrightarrow LSU(r)$ induces a morphism on cohomology from $H^*(BLSU(r)) = \wedge(\bar{c}_2,..., \bar{c}_r) \otimes S(c_2,...,c_r)$ to $H^*(BLSO(r))$ satisfying
\begin{itemize}
 \item $\iota^*(\bar{c}_{2k-1}) = \iota^*(c_{2k-1}) = 0$,  for all $k$
 \item $\iota^*(c_{2k}) = p_k$  and $\iota^*(\bar{c}_{2k}) = \bar{p}_k$ for all $k$,
 \end{itemize}
The conjugation action of $LO(r)/LSO(r) \cong \Z/2$ on $H^*(BLSO(r);\F)$ is trivial on generators $p_i$ and $\bar{p}_i$ for all $i$ and by $-1$ on $e_{r'}$ and $\bar{e}_{r'}$.
\end{prop}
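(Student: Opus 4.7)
The plan is to compute $H^*(BLSO(r);\F)$ via the Serre spectral sequence of the evaluation fibration $SO(r) \to BLSO(r) \to BSO(r)$, obtained as in Lemma \ref{blsu} by applying $B$ to $\Omega SO(r) \to LSO(r) \to SO(r)$ and identifying $B\Omega SO(r) \simeq SO(r)$. In characteristic $\neq 2$ the base cohomology $H^*(BSO(r);\F)$ is a polynomial algebra on Pontryagin classes (with the Euler class $e_{r'}$ adjoined when $r=2r'$), and the fiber cohomology $H^*(SO(r);\F)$ is the exterior algebra on the corresponding transgressions. So the proposition amounts to the assertion that this spectral sequence collapses at $E_2$ with the asserted multiplicative structure.

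The cleanest way to establish collapse is to identify $BLG \simeq L(BG) = \mathrm{Map}(S^1,BG)$ (valid for any topological group $G$) and invoke the classical free loop space computation: when $H^*(BG;\F)$ is a polynomial algebra, the evaluation fibration $\Omega BG \to L(BG) \to BG$ has collapsing Serre spectral sequence and $H^*(L(BG);\F) \cong H^*(BG;\F) \otimes \Lambda(\bar x_i)$, with $\bar x_i$ the suspensions of the polynomial generators. This is the content of V\'igu\'e-Poirrier--Sullivan rationally, and extends to any field in which $H^*(BG;\F)$ is polynomial. An alternative, more self-contained route is to mimic Lemma \ref{blsu} by comparing with the collapsing spectral sequence for $BLSU(r)$: the constant-loop inclusion $SO(r) \hookrightarrow LSO(r)$ yields a section of the evaluation fibration, and the comparison lifts all fiber generators coming from the $\bar c_{2k}$; recovering $\bar e_{r'}$ in the even rank case requires extra care, which is where this approach is trickiest.

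For the formula describing $\iota^*$: naturality of the above identification applied to $LSO(r) \hookrightarrow LSU(r)$ reduces the computation to the classical map $H^*(BSU(r);\F) \to H^*(BSO(r);\F)$, which on Chern classes sends $c_{2k-1} \mapsto 0$ and $c_{2k} \mapsto p_k$ (with sign conventions absorbed, and $c_{2r'} \mapsto p_{r'} = e_{r'}^2$ in the even rank case). By naturality of suspension, the bar versions then satisfy $\bar c_{2k-1} \mapsto 0$ and $\bar c_{2k} \mapsto \bar p_k$, as required.

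Finally, for the conjugation action of $LO(r)/LSO(r) \cong \Z/2$: the nontrivial element is represented by a constant loop at an orientation-reversing element of $O(r)$, acting on $SO(r)$ by the standard outer automorphism. This action fixes the Pontryagin classes on $H^*(BSO(r);\F)$ (they descend from $BO(r)$) and negates $e_{r'}$ since the Euler class depends on orientation. By naturality of the free loop space identification, the $\bar p_k$ are fixed and $\bar e_{r'} \mapsto -\bar e_{r'}$. The principal obstacle throughout is justifying collapse of the Serre spectral sequence rigorously, particularly so as to produce the generator $\bar e_{r'}$ in the even rank case; this is cleanest through the free loop space approach but requires the extension of V\'igu\'e-Poirrier--Sullivan to the odd positive characteristics at hand.
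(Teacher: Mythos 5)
Your proposal is correct and follows essentially the same route as the paper: both rest on the identification $BLSO(r)\simeq L(BSO(r))$, the fact that the free loop space of a space with polynomial $\F$-cohomology has cohomology equal to the base tensored with an exterior algebra on the suspensions of the polynomial generators, the definition of the generators $\bar{p}_k,\bar{e}_{r'}$ via the evaluation map, and naturality applied to $LSO(r)\hookrightarrow LSU(r)$ and to the constant-loop representative of $LO(r)/LSO(r)$. The collapse that you flag as the main obstacle in odd positive characteristic is exactly what the paper outsources to \cite{KMN} (Theorem 2), which applies whenever $H^*(BG;\F)$ is polynomial --- as it is here for every field of characteristic $\neq 2$, since $H^*(BSO(r);\Z)$ has only $2$-torsion --- so no new extension of Vigu\'e-Poirrier--Sullivan is actually required.
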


\begin{proof}
The formulas for $H^*(BLSO(r);\F)$ can be deduced from (\cite{KMN} Theorem 2) using the well known fact that $H^*(BSO(r);\F)$ is a polynomial ring generated by Pontryagin classes and (if $r$ is even) the Euler class. Using the identification $BLSO(r) \cong LBSO(r)$,  we get an evaluation map $ev: S^1 \times LBSO(r) \rightarrow BSO(r)$. The generators are defined by $p_i := \int_{pt} ev^*(p_i)$ and $\bar{p}_i := \int_{S^1} ev^*(p_i)$  where $\int$ denotes the slant product with respect to homology class in $H_*(S^1;\F)$ and $e_i, \bar{e}_i$ and $c_i, \bar{c}_i$ are defined similarly.  The formula for $i^*$ follows from the well known relationships between Chern classes, Pontryagin classes, and Euler classes described in \cite{MiSt}.  We refer to \cite{B} \S 4 where this construction is laid out in greater detail.     
\end{proof}

\begin{cor}\label{oddrealloops2}
The invariant subring of the $LO(r)/LSO(r) \cong \Z/2$ automorphism described above satisfies
$$  H^*(BLSO(2r');\F)^{\Z/2} \cong H^*(BLSO(2r'+1);\F)^{\Z/2} \cong H^*(BLSO(2r'+1);\F) \cong \wedge( \bar{p}_1,...,\bar{p}_{r'} ) \otimes S(p_1,...,p_{r'})  $$
with all isomorphism induced by the obvious inclusions. The induced map $H^*(BLSU_r;\F) \rightarrow H^*(BLSO(r);\F)^{\Z/2}$ sends
\begin{eqnarray*}
&  \iota^*(\bar{c}_{2k-1}) = \iota^*(c_{2k-1}) = 0, & \text{ for all $k$}\\
 & \iota^*(c_{2k}) = p_k \text{ and } \iota^*(\bar{c}_{2k}) = \bar{p}_k & \text{for all $k $.}
 \end{eqnarray*}

\end{cor}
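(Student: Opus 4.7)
The argument splits naturally into two cases depending on the parity of $r$. When $r=2r'+1$ is odd, Proposition \ref{oddrealloops} gives $H^*(BLSO(2r'+1);\F)\cong\wedge(\bar p_1,\dots,\bar p_{r'})\otimes S(p_1,\dots,p_{r'})$, and the $\Z/2$-action is trivial on every generator. Hence the invariant subring equals the whole ring, and both the ring isomorphism and the formula for $\iota^*$ are immediate from Proposition \ref{oddrealloops}.

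The main content is the even-rank case $r=2r'$. The plan is to write $H^*(BLSO(2r');\F)=A\otimes\wedge(\bar e_{r'})\otimes S(e_{r'})$, where $A:=\wedge(\bar p_1,\dots,\bar p_{r'-1})\otimes S(p_1,\dots,p_{r'-1})$ is $\Z/2$-invariant, and to decompose a general element as $a\cdot\bar e_{r'}^{\,i}\cdot e_{r'}^{\,j}$ with $a\in A$, $i\in\{0,1\}$, $j\ge 0$. Since the action sends this to $(-1)^{i+j}$ times itself, the invariants are spanned over $A$ by monomials with $i+j$ even, namely by $e_{r'}^{2m}$ and $\bar e_{r'}e_{r'}^{2m+1}$. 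Setting $p_{r'}:=e_{r'}^{\,2}$ and $\bar p_{r'}:=2\bar e_{r'}e_{r'}$, we obtain $e_{r'}^{2m}=p_{r'}^{\,m}$ and $\bar e_{r'}e_{r'}^{2m+1}=\tfrac12\bar p_{r'}\,p_{r'}^{\,m}$, where $2$ is invertible since $\mathrm{char}(\F)\ne 2$. Together with the relation $\bar p_{r'}^{\,2}=4\bar e_{r'}^{\,2}e_{r'}^{\,2}=0$ (from $\bar e_{r'}^{\,2}=0$) this shows the invariant subring is freely generated over $A$ by $p_{r'}$ and $\bar p_{r'}$, yielding the stated ring $\wedge(\bar p_1,\dots,\bar p_{r'})\otimes S(p_1,\dots,p_{r'})$.

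To realize the three rings as identified by \emph{inclusion-induced} maps, I would use the block inclusion $SO(2r')\hookrightarrow SO(2r'+1)$, which loops to $LSO(2r')\hookrightarrow LSO(2r'+1)$. The classical fact that pulling back an oriented rank $2r'+1$ bundle to an oriented rank $2r'$ bundle (via a trivial line summand) sends Pontryagin classes $p_k\mapsto p_k$ for $k<r'$ and $p_{r'}\mapsto e_{r'}^{\,2}$ extends by slant product to the loop versions, showing the induced map on cohomology sends the abstract generators of $H^*(BLSO(2r'+1);\F)$ to precisely the generators of the invariant subring computed above. Finally, the factorization of $\iota^*:H^*(BLSU_r;\F)\to H^*(BLSO(r);\F)$ through the invariants is automatic: Proposition \ref{oddrealloops} already places the image in the subring generated by Pontryagin classes, and in the even-rank case the definitions $p_{r'}=e_{r'}^{\,2}$, $\bar p_{r'}=2\bar e_{r'}e_{r'}$ make the formulas $\iota^*(c_{2k})=p_k$, $\iota^*(\bar c_{2k})=\bar p_k$ hold for all $k\leq r'$.

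The main obstacle is purely bookkeeping in the even-rank case: one must confirm that the definitions $p_{r'}=e_{r'}^{\,2}$ and $\bar p_{r'}=2\bar e_{r'}e_{r'}$ introduced in Proposition \ref{oddrealloops} are consistent both with the invariant calculation (requiring $\mathrm{char}(\F)\ne 2$ so that $\tfrac12$ exists) and with the pullback formula for the block inclusion $SO(2r')\hookrightarrow SO(2r'+1)$. Once these normalizations are matched, the three isomorphisms and the description of $\iota^*$ fall out uniformly.
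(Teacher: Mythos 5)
Your proposal is correct and matches the paper's intent: the corollary is stated without a separate proof precisely because it follows directly from Proposition \ref{oddrealloops} by the weight-space computation you carry out (monomials $a\,\bar e_{r'}^{\,i}e_{r'}^{\,j}$ with $i+j$ even, rewritten via $p_{r'}=e_{r'}^{\,2}$ and $\bar p_{r'}=2\bar e_{r'}e_{r'}$, using $\mathrm{char}(\F)\neq 2$), together with the standard pullback behaviour of Pontryagin and Euler classes under $SO(2r')\hookrightarrow SO(2r'+1)$.
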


\begin{prop}\label{abccohom}
If $r$ is odd, then
$$ H^*(BLSU(r)^{\tau_\alpha};\F) \cong  H^*(BLSU_{r}^{\tau_\beta}; \F)  \cong H^*(BLSU_{r}^{\tau_\gamma}; \F)  \cong H^*(BLSO(r);\F). $$ 
If $r$ is even, then 
\begin{eqnarray*}
 H^*(BLSU(r)^{\tau_\alpha};\F)  & \cong & H^*(BLSO(r);\F).  \\
 H^*(BLSU(r)^{\tau_\beta}; \F) & \cong & H^*(BLSO(r-1);\F) \\
 H^*(BLSU(r)^{\tau_\gamma};\F) & \cong & H^*(BLSO(r);\F)^{\Z/2}.
\end{eqnarray*}
In all three cases, the homomorphism $H^*(BLSU_r;\F) \rightarrow H^*(BLSU_r^{\tau};\F) $ agrees with the homomorphisms described in Propositions \ref{oddrealloops} and \ref{oddrealloops2} on generators, up to multiplication by a non-zero scalar.
\end{prop}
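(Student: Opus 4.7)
The plan is to treat the three involution types separately, in each case identifying $LSU(r)^{\tau}$ with a (possibly twisted) loop group whose cohomology has been computed in Propositions \ref{oddrealloops} and \ref{oddrealloops2}, and then verifying the statement on generators by naturality of Chern/Pontryagin/Euler classes under the natural inclusions. Throughout I would follow the template of \cite{B}, setting up Serre spectral sequences of evaluation fibrations $\Omega G \to LG^\sigma \to G^\sigma$ and arguing their collapse in odd or zero characteristic, either directly (when a section exists over the fixed subgroup) or by Poincar\'e series matching via Lemma \ref{savetroub}.

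Case $\alpha$ is essentially tautological: the equality $LSU(r)^{\tau_\alpha} = LSO(r)$ holds by definition, so Proposition \ref{oddrealloops} applies directly, and the induced map on generators is the standard restriction of Chern classes to Pontryagin/Euler classes along $SO(r) \hookrightarrow SU(r)$. For case $\beta$, I would use that $L_{-1}SO(r)$ is the twisted loop group associated with the automorphism $\mathrm{Ad}_A$ of $SO(r)$ for a reflection $A \in O(r) \setminus SO(r)$. When $r$ is odd this automorphism is inner, since $-I \in O(r) \setminus SO(r)$ and $\mathrm{Ad}_A = \mathrm{Ad}_{-A}$ with $-A \in SO(r)$; picking a path $\gamma \colon [0,2\pi] \to SO(r)$ from $I$ to $-A$, the map $g(\theta) \mapsto \gamma(\theta) g(\theta) \gamma(\theta)^{-1}$ gives an explicit isomorphism $L_{-1}SO(r) \cong LSO(r)$, reducing to case $\alpha$. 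When $r$ is even the automorphism $\mathrm{Ad}_A$ is genuinely outer, with fixed subgroup $SO(r)^{\mathrm{Ad}_A} \cong O(r-1)$; here I would exhibit the canonical embedding $LO(r-1) \hookrightarrow L_{-1}SO(r)$ of constant-in-the-fixed-subgroup-valued loops and argue it is an $\F$-cohomology equivalence by comparing the Serre spectral sequences of the two evaluation fibrations and matching Poincar\'e series against the formulas from Proposition \ref{oddrealloops} (using that $H^*(BLO(r-1);\F) \cong H^*(BLSO(r-1);\F)$ in characteristic $\neq 2$).

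Case $\gamma$ is analogous but more delicate. I would interpret $LSU(r)^{\tau_\gamma}$ as sections of the bundle of groups $S^1 \times_{\Z/2} SU(r) \to S^1/(\Z/2)$, where $\Z/2$ acts on $S^1$ by rotation by $\pi$ and on $SU(r)$ by complex conjugation, i.e.\ as a twisted loop group of $SU(r)$ with monodromy complex conjugation and half-period. When $r$ is odd, a path in $SU(r)$ trivializing the mod-center ambiguity of complex conjugation yields a homotopy equivalence with $LSO(r)$; when $r$ is even the outer twist persists, and by comparing the Serre spectral sequence of $LSU(r)^{\tau_\gamma} \to SU(r)$ with the $\Z/2$-equivariant structure on $LSO(r)$ described at the start of \S 3.2, one identifies $H^*(BLSU(r)^{\tau_\gamma};\F)$ with the $\Z/2$-invariant subring $H^*(BLSO(r);\F)^{\Z/2}$. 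The main obstacle in both even twisted cases ($\beta$-even and $\gamma$-even) is the collapse of the relevant Serre spectral sequence and the identification of the induced map from $H^*(BLSU_r;\F)$ with the expected image; I would handle this by a Poincar\'e series matching argument via Lemma \ref{savetroub}, using surjectivity of the characteristic class map (which is immediate from the standard splittings of $H^*(BU(r))$ and $H^*(BO(r))$ in odd characteristic).
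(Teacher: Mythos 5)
Your overall framing (identifying the $\beta$ and $\gamma$ groups as twisted loop groups, using evaluation fibrations, and reducing the odd-rank $\beta$ case to the untwisted case via innerness of $\mathrm{Ad}_A$) is in the spirit of the paper, and the odd-rank $\beta$ reduction is correct. But there are two genuine problems. First, your treatment of case $\gamma$ for odd $r$ is wrong: complex conjugation is an \emph{outer} automorphism of $SU(r)$ for every $r\geq 3$ (it realizes the nontrivial symmetry of the $A_{r-1}$ diagram), so no path in $SU(r)$ trivializes the twist; indeed $LSU(r)^{\tau_\gamma}$ is connected while $\pi_0(LSO(r))\cong\Z/2$, so the two groups are not homotopy equivalent. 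Only their classifying-space cohomologies agree in odd characteristic, and establishing that requires the same spectral sequence analysis as the even-rank case rather than a group-level identification.

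Second, and more seriously, the genuinely twisted cases are precisely the ones where the Serre spectral sequence of $G \to BL_\sigma G \to BG$ does \emph{not} collapse: the claimed answers ($H^*(BLSO(r-1);\F)$ in case $\beta$, $H^*(BLSO(r);\F)^{\Z/2}$ in case $\gamma$) are strictly smaller than $H^*(G;\F)\otimes H^*(BG;\F)$, so the relevant odd primitives ($\bar{e}_{r'}$, respectively the $\bar{c}_{2i+1}$) must transgress onto \emph{unit} multiples of the corresponding polynomial generators in every odd characteristic. Lemma \ref{savetroub} cannot deliver this: it gives only the upper bound $P_t(E)\leq P_t(F)P_t(B)$, with equality exactly when restriction to the fibre is surjective --- which fails here --- and it provides neither a lower bound nor any control over whether a transgression coefficient $\lambda_i$ is divisible by a given odd prime $p$. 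If $p\mid\lambda_i$ then $\bar{c}_{2i+1}$ and $c_{2i+1}$ would both survive and the cohomology would be too large. The paper closes this gap by (i) invoking the rational/large-characteristic computation of \cite{B2} together with the absence of odd torsion in $H^*(BLSO(r);\Z)$ to obtain surjectivity onto the invariant ring and transgressivity of the odd primitives, and then (ii) a rotation argument: rotating the loop parameter by $180$ degrees is isotopic to the identity on $BLSU(r)^{\tau_\gamma}$, hence acts trivially on its cohomology, yet it sends $\bar{c}_{2i+1}\mapsto -\bar{c}_{2i+1}$ on the fibre, so these classes cannot survive to $E_\infty$ in odd characteristic. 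Your proposal contains no substitute for this step, and your claim that the needed surjectivity is ``immediate from standard splittings'' also glosses over its dependence on \cite{B2}.
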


\begin{proof}
In case $\alpha$ we have an equality $LSU(r)^{\tau_\alpha} = LSO(r)$ so there is nothing to prove.

The cases $\beta$ and $\gamma$ can be identified with twisted loop groups, and their cohomology has already been calculated in \cite{B2} for characteristic greater than $r$. The remaining odd primes can be dealt with as follows. We treat only the case $\gamma$ in detail since $\beta$ is dealt with similarly. 

First note that since  $H^*(BLSU(r)^{\tau_c}; \Q) \rightarrow H^*(BLSO(r); \Q)^{\Z/2} $ is known to be surjective from  \cite{B2} and  $H^*(BLSO(r); \Z)$ does not contain $p$ torsion for any odd $p$, it follows that 
\begin{equation}\label{invoke soon}
H^*(BLSU(r)^{\tau_c}; \F) \rightarrow H^*(BLSO(r); \F)^{\Z/2} 
\end{equation} 
is surjective. 
We have a short exact sequence $1 \rightarrow \Omega SU(r) \rightarrow LSU(r)^{\tau_\gamma} \rightarrow SU(r) \rightarrow 1$ which gives rise to a fibration sequence $SU(r) \rightarrow BLSU(r)^{\tau_\gamma} \rightarrow BSU(r)$, where we have employed the homotopy equivalence $B \Omega SU(r) \cong SU(r)$. The Serre spectral sequence $(E_k, \delta_k)$, with $ \delta_k: E_k^{p,q}\rightarrow E_k^{p+k, q-k+1}$ converges to $BLSU(r)^{\tau_\gamma} $ and has $E_2^{p,q} = H^q(SU(r);\F) \otimes H^p(BSU(r))$ where $ H^*(SU(r);\F) \otimes H^*(BSU(r)) \cong  \wedge(\bar{c}_2,...,\bar{c}_r) \otimes S(c_2,...,c_r)$. By the surjectivity of (\ref{invoke soon}) the even generators $\bar{c}_{2i}$ survive to $E_{\infty}$ for all  $i$.  

We claim that the odd generators $\bar{c}_{2i+1}$ are all transgressive, meaning that $\delta_k(\bar{c}_i) = 0$ for $k < 2i$. Since $E_2^{*,*}$ is torsion free, it suffices to prove this for $\F = \Q$, when we know that (\ref{invoke soon}) is an isomorphism. Since $\bar{c}_2$ survives to infinity, the only class that can kill $c_3$ is $\bar{c}_3$. Since we know that $c_3$ is killed (when $\F = \Q$), it follows that $\bar{c}_3$ is transgressive, so $\delta_{6} (\bar{c}_3) = \lambda c_3$ for some non-zero scalar $\lambda$, hence $\delta_k(\bar{c}_3) = 0$ for $k <6$. By induction, this implies that the only class that can kill $c_5$ is $\bar{c}_5$ and so on.

Therefore, we know that for all $i$, $\delta_{4i+2}(\bar{c}_{2i+1}) = \lambda_i c_{2i+1}$ for some nonzero integer $\lambda_i$. It remains to show that the $\lambda_i$ is not divisible by any odd prime $p$. If it were, that would mean $\bar{c}_{2i+1}$ survives to $E_{\infty}$.  But this is not true by the following argument. Consider the family of automorphisms of $LSU(r)^{\tau_\gamma} \leq Maps(S^1, SU(r))$ obtained by rotating the the domain circle. Since this is a path connected family, they all act by isotopies on $BLSU(r)^{\tau_\gamma} $ and hence act trivially on cohomology.  However, if we rotate by 180 degrees, this has the effect on the fibre of (\ref{invoke soon}) of complex conjugating the matrix entry-wise. In terms of the cohomology ring $\wedge(\bar{c}_2,...,\bar{c}_r)$ this sends $\bar{c}_{2i} \mapsto \bar{c}_{2i}$ and $\bar{c}_{2i+1} \mapsto -\bar{c}_{2i+1}$. It follows that $\bar{c}_{2i+1}$ is not the restriction of a class in $H^*(BSU(r)^{\tau_\gamma};\Z_p)$ for $p$ odd hence it does not survive to $E_{\infty}$.  

The argument for case $\beta$ is similar, except it is only the primitive $ \bar{e}_r$ of the Euler class that must be shown to be transgressive and the rotation automorphism must also incorporate the twist coming from the Moebius bundle defining $L SU_r^{\tau_\beta}$. Lifting a $360$ degree rotation of the circle to the Moebius bundle determines an orientation reversal of the fibres and sends $\bar{e}_r$ to $-\bar{e}_r$ and the argument goes through as before.
\end{proof}

\subsubsection{Cohomology of $B C \G_\R$ in odd or zero characteristic}

\begin{thm}\label{BigLongThm}
Let $\F$ be a field of odd or zero characteristic. 

\textbf{Case 1} If the rank $r = 2r'+1$ is odd, then the Poincar\'e series  equals
\begin{equation}\label{oddoddpp}
P_t(BC\G_\R;\F) = P_t(BS\G_\R) =  \prod_{k'=1}^{r'} \frac{(1+t^{4k'-1})^{g} (1+t^{4k'+1})^{g}}{(1-t^{4k'})^2}
\end{equation}
which depends only on the rank $r$ and degree $g$.

\textbf{Case 2} If the rank $r= 2r'$ is even, then the Poincar\'e series factors 
$$ P_t(C \G_{\R};\F)  = F_t G_t $$
where 
$$F_t =   \prod_{k''=1}^{r'-1} \frac{(1+t^{4k''-1})^{g} (1+t^{4k''+1})^{g}}{(1-t^{4k''})^2}$$
depends only on the rank $r$ and the genus $g$ and $G_t$ is defined case by case below.

Let $a$ be the number real circles of $(\Sigma, \tau)$ of which $b$ are odd and $c$ are even with respect to $(E,\ttau)$. Then

\begin{itemize}
\item If $a=0$, then
$$ G_t =   \frac{(1+t^{2r-1})^{g}} {(1-t^{2r})}  $$

\item If $a > c \geq 0$ and $\Sigma \setminus \Sigma^{\tau}$ is connected then
$$ G_t = \frac{ (1+t^{r-1})^{c}(1+t^r)^{c} +(1-t^{r-1})^{c} (1-t^r)^{c} }{2} (1+t^{2r-1})^{g-c-1}$$

\item If $a = c > 0$ and $\Sigma \setminus \Sigma^{\tau}$ is connected then
$$ G_t = \frac{ (1+t^{r-1})^{c}(1+t^r)^{c-1} +(1-t^{r-1})^{c} (1-t^r)^{c-1} }{2(1-t^r)} (1+t^{2r-1})^{g-c}$$

\item If $a> c = 0$ and $\Sigma \setminus \Sigma^{\tau}$ is disconnected then
$$ G_t = \frac{(1+t^{2r-1})^{g}}{1-t^{2r-2}}$$

\item If $a>c > 0$, $c$ is odd, and $\Sigma \setminus \Sigma^{\tau}$ is disconnected then
$$ G_t =  \frac{ (1+t^{r-1})^{c} (1+ t^r)^{c} + (1- t^{r-1})^{c} (1- t^r)^{c}}{2} (1+t^{2r-1})^{g-c-1}  $$

\item If $a> c > 0$, $c$ is even, and $\Sigma \setminus \Sigma^{\tau}$ is disconnected then
$$ G_t =   \Big(  \frac{t^{r-1} (t^{r-1}+ t^r)^{c+1}}{(1-t^{2r-2})}+ \frac{(1+t^{r-1})^{c} (1+ t^r)^{c} + (1- t^{r-1})^{c} (1- t^r)^{c}}{2}\Big) (1+t^{2r-1})^{g-c-1}$$

\item If $a = c > 0$ and $\Sigma \setminus \Sigma^{\tau}$ is disconnected then
$$  G_t = \frac{ (1+t^{r-1})^c(1+t^r)^{c-1}+ (1-t^{r-1})^{c} (1-t^r)^{c-1}}{2(1-t^{2r})}(1+t^{2r-1})^{g-c}$$

\end{itemize}

\end{thm}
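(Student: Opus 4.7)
The plan is to compute $P_t(BS\G_\R;\F)$ by running the EMSS of (\ref{hpd}) over $\F$ in parallel with the characteristic-two argument of the previous subsection, then extract $P_t(BC\G_\R;\F)$ by taking $\Z/2$-invariants. Since $[C\G_\R : S\G_\R] = 2$ is coprime to the characteristic of $\F$, the transfer argument for the double cover $BS\G_\R \to BC\G_\R$ yields
$$ H^*(BC\G_\R;\F) \cong H^*(BS\G_\R;\F)^{\Z/2}, $$
so the problem reduces to (a) computing the EMSS over $\F$ and (b) identifying the $\Z/2$-action on its $E_\infty$-page.

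For (a) I would mimic the proof structure of Theorem \ref{surjf}: the $E_2$-page is the cohomology of (\ref{ComplexEMSS}) with $M^* = \bigotimes_i H^*(BLSU_r^{\tau_i};\F)$ described by Proposition \ref{abccohom}. The crucial simplification, supplied by Corollary \ref{oddrealloops2}, is that the odd Chern classes $\bar c_{2j+1,i}$ and $c_{2j+1,i}$ restrict to zero in each $H^*(BLSU_r^{\tau_i};\F)$, which decouples much of the Koszul--Tate differential after tensoring over $R^*$. The resulting $E_2$ is an explicit tensor product of a polynomial algebra (in the $p_k$ and, in the even rank case, in each $e_{r',i}$ coming from a type-$\alpha$ or type-$\gamma$ circle), an exterior algebra (in the $\bar p_{i,k}$ and each $\bar e_{r',i}$), and the divided-power algebra $\Gamma(z_k)$. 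Collapse then follows from sandwich bounds analogous to (\ref{ineqiseq2}) and (\ref{ineqiseq}) via Lemma \ref{savetroub} applied to (\ref{fibrseq}) and to $BS\G_\R \to BC\G_\R \to B\R^*$, combined with the analogous odd-characteristic Poincar\'e series for $B\G_\R$ obtained by the same EMSS method as in \cite{B}.

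For (b), the $\Z/2$-action on $E_2$ is induced by conjugation by an element $g \in C\G_\R \setminus S\G_\R$ that acts as $-1$ on the distinguished Real line summand $L$ in (\ref{decompofrealbundles}). By Segal's theorem the induced action on $BMaps(X,SU_r)$ and on each $BLSU_r$ is trivial, so the only nontrivial part of the action on $E_2$ comes from Corollary \ref{oddrealloops2}: it negates each Euler pair $(e_{r',i}, \bar e_{r',i})$ attached to a type-$\alpha$ boundary circle and fixes all Pontryagin generators together with every type-$\beta$ and type-$\gamma$ contribution (the latter because $H^*(BLSU_r^{\tau_\gamma};\F)$ is already the $\Z/2$-invariant subring of $H^*(BLSO(r);\F)$). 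In the odd rank case there are no Euler classes, the $\Z/2$-action is trivial, and formula (\ref{oddoddpp}) follows immediately from the $E_2$ computation.

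In the even rank case the Pontryagin factor $F_t$ is $\Z/2$-fixed, and $G_t$ collects the remaining Euler contribution to $E_2^{\Z/2}$. The symmetrization $\tfrac12\bigl((1+t^{r-1})^c(1+t^r)^c + (1-t^{r-1})^c(1-t^r)^c\bigr)$ enumerates $\Z/2$-invariant monomials in the $c$ Euler pairs attached to even circles, which accounts for the numerator in every subcase. The main obstacle I anticipate is the case-by-case bookkeeping for the denominators, which have three distinct origins: the Koszul--Tate relation $\delta x_{i',r'} = c_{i',r'} - c_{1,r'}$ forces a linear dependency among Euler classes that, when $a = c$, produces the extra $(1-t^r)^{-1}$ or $(1-t^{2r})^{-1}$ factor after $\Z/2$-averaging; the divided-power generator $z_{r'}$ contributes a further denominator when $\Sigma \setminus \Sigma^\tau$ is disconnected and type-$\gamma$ circles are present; and the identity $p_{r'} = e_{r'}^2$ in the $\gamma$-contribution couples with the parity of $c$ to produce the $(1-t^{2r-2})^{-1}$ factor in the two remaining subcases. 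Once these interactions are sorted subcase by subcase, each closed form for $G_t$ becomes a direct generating-function identity.
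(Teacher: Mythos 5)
Your overall architecture matches the paper's: run the EMSS of (\ref{hpd}) over $\F$, reduce to $H^*(BC\G_\R;\F)\cong H^*(BS\G_\R;\F)^{\Z/2}$, and identify the $\Z/2$-action as negation of the Euler pairs attached to the type-$\alpha$ circles. The gap is in your collapse argument. The sandwich bounds (\ref{ineqiseq2}) and (\ref{ineqiseq}) cannot close in odd characteristic: Lemma \ref{savetroub} gives equality in $P_t(BC\G_\R)\leq P_t(BS\G_\R)\,P_t(B\R^*)=P_t(BS\G_\R)$ only if $H^*(BC\G_\R;\F)\to H^*(BS\G_\R;\F)$ is surjective, and that map factors through the invariant subring, so surjectivity fails precisely when the $\Z/2$-action you describe is nontrivial --- i.e.\ in every even-rank case with $c>0$, which is the content of the theorem. (You would also need the exact odd-characteristic Poincar\'e series of $B\G_\R$ as an input; it is not in \cite{B}, and computing it faces the same collapse question, and moreover $B\R^*=\R P^\infty$ does not satisfy the cell hypothesis of Lemma \ref{savetroub} over $\F$.) The paper degenerates the spectral sequence by a different mechanism: over $\Q$ one has $\Gamma(z)\cong S(z)$, so $EM_2^{*,*}$ is multiplicatively generated by classes in the columns $(0,*)$ and $(-1,*)$, which kills all higher differentials; the universal coefficient theorem then transfers the collapse to every odd characteristic. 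You need this, or some other degeneration argument, in place of the sandwich.

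Separately, the hardest step --- the actual source of the case distinctions in $G_t$ --- is not the generating-function bookkeeping you describe but the computation of $H(T^{*,*})$ when $a=n$, i.e.\ when $\Sigma\setminus\Sigma^{\tau}$ is disconnected. There the paper filters $T^{*,*}$ by exterior degree in the $\bar e_i$, must pass to an $E_3$-page, and computes $H(S)=\mathrm{Ann}(\bar p)/\bar p S$ for $\bar p=\sum_i\bar e_ie_i$ by a weight-space argument under $(\Z/2)^{n-b}$: the answer is $\bigotimes_i\F\{\bar e_i,e_i\}$ when $b>0$ and $0$ when $b=0$, and this dichotomy is exactly what produces the $(1-t^{2r-2})^{-1}$ term in one subcase and not the other. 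Your attribution of that denominator to ``$z_{r'}$ \ldots\ when $\Sigma\setminus\Sigma^{\tau}$ is disconnected and type-$\gamma$ circles are present'' is vacuous, since type-$\gamma$ circles occur precisely when $\Sigma\setminus\Sigma^{\tau}$ is connected; the surviving divided powers $z_r^{[d]}$ pair with $H(S)$ in the disconnected case with $b>0$, and that pairing is where the factor actually comes from.
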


\begin{proof}

Denote $BS\G_{\R} = BS\G(\hat{g}, n; {\ttau}_1,...,{\ttau}_n)$ from the homotopy pull-back diagram (\ref{hpd}). The associated Eilenberg-Moore spectral sequence  $EM_r^{*,*}$ converges to $H^*(BS\G_{\R});\F)$. The second page $EM_2^{*,*}$, equals the cohomology of the  differential bi-graded algebra  $ (K^{*,*} \otimes_{R^*} M^*, \delta \otimes 1) $ described in (\ref{ComplexEMSS})
where 
\begin{itemize}
	\item $(K^{*,*}, \delta)$ is the Koszul-Tate complex (\ref{ktc}),
	\item $M^* = M^{0,*} := \bigotimes_{i=1}^n H^*(BLSU(r)^{{\ttau}_i})$, and
	\item $R^* = R^{0,*}:= \bigotimes_{i=1}^n H^*(BLSU(r)) =   \wedge( \bar{c}_{i,k}) \otimes S(c_{i,k}).$
\end{itemize}

\textbf{Case 1: $r = 2r'+1$ is odd}

In this case $C\G_\R \cong S\G_\R \times \R^*$, so $H^*(BC\G_\R;\F) \cong  H^*(BS\G_\R \times B\R^*;\F) \cong H^*(BS\G_\R;\F)$, because $B\R^* = \R P^{\infty}$ is $\F$-acyclic. So it suffices to compute $H^*(BS\G_\R;\F)$.

Recall that we have index sets $i \in \{1,...,n\}$, $i' \in \{2,...,n\}$, $k \in \{2,...,r \}$ and introduce a further index set $k' \in \{1,...,r'\}$. We have 
$$
K^{*,*} \otimes_{R^*} M^* =   K^{*,*} := \Gamma(z_k) \otimes \wedge(x_{i',k}) \otimes \wedge(\bar{p}_{i,k'}) \otimes S(p_{i,k'}) \otimes A
$$
with bidegrees and differentials

\begin{tabular}{|c|c|c|}
	\hline
generator   & bi-degree  & $\delta$-derivative \\
\hline
$\bar{p}_{i,k'}$ & $(0,4k'-1)$ & $0$ \\
$p_{i,k'}$ & $(0,4k')$ & $0$\\
$x_{i',2k'}$ & $(-1,4k')$ & $p_{i',k'}-p_{1,k'}$ \\
$x_{i',2k'+1}$ & $(-1,4k'+2)$ & $0$ \\
$z_{2k'}$ & $(-1,4k'-1)$ & $\bar{p}_{1,k'}+...+\bar{p}_{n,k'} $\\
$z_{2k'+1}$ & $(-1,4k'+1)$ & $0$\\
\hline	

\end{tabular}

Taking cohomology yields
\begin{equation}\label{oddcase}
EM_2^{*,*} = \Gamma(z_{2k'+1}) \otimes \wedge(x_{i',2k'+1}) \otimes \frac{ \wedge (\bar{p}_{i,k'})}{(\bar{p}_{i,1} +...+\bar{p}_{i,r'})} \otimes S(p_{k'}) \otimes A
\end{equation}
where we abuse notation as usual and denote cohomology classes by representative cocycles.

Over the rational cofficients, $\Gamma(z_{2k'-1})  \cong S(z_{2k'-1})$ so the bigraded ring $EM_2^{*,*}$ is generated by elements in the $-1$ and $0$ columns, which implies that $EM_2^{*,*} = EM_{\infty}^{*,*}$.  By the universal coefficient theorem, the spectral sequence must collapse for all fields under consideration. Therefore (\ref{oddcase}) is isomorphic to an associated graded ring of $H^*(BS\G_\R; \F)$, yielding (\ref{oddoddpp}).

\textbf{Case 2: $r=2r'$ is even} 

We suppose that the first $0\leq a \leq n$ boundary circles are real circles, and that the first $0 \leq b \leq a$ have SW class one and the remaining have zero.  

We introduce another index set $k''\in \{1,...,r'-1\}$. Applying Proposition \ref{abccohom} we have
\begin{eqnarray*} K^{*,*} \otimes_{R^*} M^* & =&   \Gamma(z_k) \otimes \wedge ( x_{i',k}) \otimes \wedge (p_{i,k''}) \otimes S(p_{i,k''}) \otimes A \\
& &  \otimes \wedge (\bar{e}_{b+1}, ..., \bar{e}_a) \otimes S(e_{b+1}, ..., e_a) \otimes \wedge (\bar{p}_{a+1,r'}, ...,\bar{p}_{n,r'}) \otimes S(p_{a+1, r'},...,p_{n,r'})
\end{eqnarray*}
with bidegrees and differentials

\begin{tabular}{|c|c|c|}
	\hline
generator   & bi-degree  & $\delta$-derivative \\
\hline
$\bar{p}_{i,k'}$ & $(0,4k'-1)$ & $0$ \\
$p_{i,k'}$ & $(0,4k')$ & $0$\\
$e_i$ & $(0,r)$ &$0$\\
$\bar{e}_i$ & $(0,r-1) $ &$0$\\
$x_{i',2k''}$ & $(-1,4k')$ &   $ p_{i',k''} -p_{1,k''}$  \\
$x_{i',2r'}$ & $(-1,4r')$ &   $ \begin{cases}  p_{i',r'} -p_{1,r'} & \text{if $b=0$} \\ p_{i',r'} & \text{if $i' > b >0$} \\ 0 & \text{if $b \geq i'$} \end{cases}$ \\
$x_{i',2k''+1}$ & $(-1,4k''+2)$ & $0$ \\
$z_{2k''}$ & $(-1,4k''-1)$ & $ \bar{p}_{1,k''}+...+\bar{p}_{n,k''} $\\
$z_{2r'}$ & $(-1,4r'-1)$ & $  \bar{p}_{b+1,r'}+...+\bar{p}_{n,r'} $\\
$z_{2k''+1}$ & $(-1,4k''+1)$ & $0$\\
\hline	
\end{tabular}

where recall we denote $e_i^2 = p_{i, r'}$ and $2\bar{e}_i e_i = \bar{p}_{i,r'}$ for $i \in \{b+1,...,a\}$. 

This decomposes as a tensor product of dgas, 
$$ K^{*,*} \otimes_{R^*} M^* = S^{*,*}\otimes T^{*,*}  $$
where
\begin{eqnarray*}
S^{*,*} & =& \Gamma( z_k| k<r) \otimes \wedge(x_{i',k}| k<r) \otimes \wedge(p_{i,k''}) \otimes S(p_{i,k''}) \otimes A\\
T^{*,*} &=& \Gamma(z_r) \otimes \wedge(x_{i',r}) \otimes \wedge (\bar{e}_{b+1}, ..., \bar{e}_a) \otimes S(e_{b+1}, ..., e_a) \otimes \wedge (\bar{p}_{a+1,r'}, ...,\bar{p}_{n,r'}) \otimes S(p_{a+1, r'},...,p_{n,r'})
\end{eqnarray*}
so we may use the Kunneth formula
$$ EM_2^{*,*} = H(K^{*,*} \otimes_{R^*} M^*) = H(S^{*,*}) \otimes H(T^{*,*}) .$$

Note that $S^{*,*}$ is independent of $a$ or $b$ with cohomology easily computed

$$  H(S^{*,*}) \cong \Gamma(z_{2k''+1})  \otimes \frac{\wedge (\bar{p}_{i,k''})}{(\bar{p}_{1,k''}+...+\bar{p}_{n,k''})}\otimes \wedge (x_{i',2k''+1}) \otimes S(p_{k''}) \otimes A. $$
and Poincar\'e series
\begin{eqnarray*}
P_t( H(S^{*,*})) &=&  \Big( \prod_{k''=2}^{r'-1} \frac{(1+t^{4k''-1})^{n-1} (1+t^{4k''+1})^{n-1}}{(1-t^{4k''})^2}\Big) \prod_{k=2}^{2r'} (1+t^{2k-1})^{2 \hat{g}}\\
&=&  (1+t^{2r-1})^{2 \hat{g}}  \prod_{k''=2}^{r'-1} \frac{(1+t^{4k''-1})^{g} (1+t^{4k''+1})^{g}}{(1-t^{4k''})^2}
\end{eqnarray*}

Our next task is to calculate the Betti numbers of $H(T^{*,*})$. Since we are ultimately interested in $H(T^{*,*})$ as a graded ring with $\Z/2$-action, we will consider $P_t(H(T^{*,*}))$ with coefficients lying in the ring of characters for $\Z/2$ where $1$ denotes the character of the trivial irrep and  $\chi$ of the non-trivial irrep of $\Z/2$.  

To calculate the Betti numbers of $H(T^{*,*})$ we use a filtration of $T^{*,*}$ and consider the associated trigraded spectral sequence $E^{*,*,*}_*$ converging to $H(T^{*,*})$.  Consider the filtration by bigraded dga ideals $$T^{*,*} = F^0 \supset F^1 \supset ... \supset F^{a-b+1} = 0$$ where $F_k := \wedge^{\geq k}(\bar{e}_{b+1},..., \bar{e}_a) T^{*,*}$. Taking subquotients determines a differential tri-graded algebra $E_1^{*,*,*}$ such that $E^{*,*,k}_1 = F^k/F^{k+1}$ and $\delta: E_1^{p,q,k} \rightarrow E_1^{p+1,q,k}$. If we ignore the third grading, then there is an isomorphism of bigraded algebras $E_1^{*,*,*} \cong T^{*,*}$, but it does not respect differentials.
For $\bar{e}_I \in \wedge^k(\bar{e}_{b+1}, ..., \bar{e}_a)$, the differential on $E^{*,*,*}_1$ is determined by the identities
$$ \delta(\bar{e}_I z_r) = \bar{e}_I (\bar{p}_{a+1} +...+\bar{p}_{n,r'})$$
$$ \delta(\bar{e}_I x_{i',r}) = \begin{cases} \bar{e}_I( p_{i',r'} - p_{1,r'} )& \text{if $a=0$} \\  \bar{e}_I p_{i',r'} & \text{if $i' >a > 0$} \\ 
\bar{e}_I e_{i',r'}^2 & \text{if $a \geq i'> b$} \\ 
0 &  \text{if $b \geq i' $} \end{cases} .$$
Define $E_2^{*,*,*} := H(E_1^{*,*,*} ,\delta)$. We consider three different cases in order of increasing difficulty.

\textbf{Case i:} $a=0$

In this case $F_1 =0$ and the filtration is trivial. We have
$$T^{*,*} =  \Gamma(z_r) \otimes \wedge(x_{i',r}) \otimes \wedge (\bar{p}_{1,r'}, ...,\bar{p}_{n,r'}) \otimes S(p_{1, r'},...,p_{n,r'})$$
and $$H(T^{*,*}) = \frac{\wedge (\bar{p}_{1,r'}, ...,\bar{p}_{n,r'})}{(\bar{p}_{1,r'}+...+\bar{p}_{n,r'} )} \otimes S(p_{r'}).$$
so $$ P_t(H(T^{*,*})) = \frac{(1+t^{2r-1})^{n-1}} {(1-t^{2r})} .$$

\textbf{Case ii} $ 0< a<n$
In this case
$$  E_1^{*,*,*} =  \Gamma(z_r) \otimes \wedge(x_{i',r}) \otimes \wedge (\bar{e}_{b+1}, ..., \bar{e}_a) \otimes S(e_{b+1}, ..., e_a) \otimes \wedge (\bar{p}_{a+1,r'}, ...,\bar{p}_{n,r'}) \otimes S(p_{a+1, r'},...,p_{n,r'}). $$
If $b >0$ we get
$$ E_2^{*,*,*}  :=  H(E_1^{*,*,*} ,\delta) \cong  \wedge( x_{2,r},..., x_{b,r}) \otimes \wedge^{k} (\bar{e}_{b+1}, ..., \bar{e}_a) \otimes \frac{S(e_{b+1}, ..., e_a)}{(e_{b+1}^2,...,e_a^2)} \otimes \frac{ \wedge (\bar{p}_{a+1,r'}, ...,\bar{p}_{n,r'})}{(\bar{p}_{a+1,r'}+ ...+\bar{p}_{n,r'})} $$
and if $b=0$ we get 
$$ E_2^{*,*,*} = \wedge^{k} (\bar{e}_{1}, ..., \bar{e}_a) \otimes S(e_1) \otimes \frac{S(e_{2}, ..., e_a)}{(e_{2}^2,...,e_a^2)} \otimes \frac{ \wedge (\bar{p}_{a+1,r'}, ...,\bar{p}_{n,r'})}{(\bar{p}_{a+1,r'}+ ...+\bar{p}_{n,r'})} .$$

Notice that in both cases, the classes in $E_2^{*,*,*}$ are represented by cycles in $T^{*,*}$. It follows that $E_2^{*,*,*} = E_{\infty}^{*,*,*}$ and that we get an isomorphism of bigraded vector spaces $H(T^{p,q})  = \sum_k E_2^{p,q,k}$ yielding
$$P_t(H(T^{*,*})) = (1+ \chi t^{r-1})^{a-b} (1+\chi t^r)^{a-b}(1+t^{2r-1})^{n-a+b-2} $$
if $b>0$ and
$$P_t(H(T^{*,*})) = \frac{(1+\chi t^{r-1})^{a} (1+\chi t^r)^{a-1}(1+t^{2r-1})^{n-a-1}}{1-t^r} $$
if $b=0$. Furthermore, $H(T^{*,*})$ is generated as ring by elements lying in columns $(0,*)$ and $(1,*)$.

\textbf{Case iii:} $a=n$

If $b=n$ then $T^{*,*} =  \Gamma(z_r) \otimes \wedge(x_{i',r})$ and the coboundary map is trivial so $T^{*,*} = H(T^{*,*})$. 

If $b \neq n$ then
$$ E_1^{*,*,*} =  \Gamma(z_r) \otimes \wedge(x_{i',r}) \otimes \wedge (\bar{e}_{b+1}, ..., \bar{e}_n) \otimes S(e_{b+1}, ..., e_n). $$
If $n>b>0$, then
$$ E_2^{*,*,*} \cong \Gamma(z_r) \otimes \wedge(x_{2,r},..., x_{b}) \otimes \wedge (\bar{e}_{b+1}, ..., \bar{e}_n)  \otimes  \frac{S(e_{b+1}, ..., e_n)}{(e_{b+1}^2,...,e_n^2)} $$
and if $b=0$, then 
$$ E_2^{*,*,*} \cong \Gamma(z_r) \otimes \wedge(\bar{e}_{1}, ..., \bar{e}_n)  \otimes S(e_1) \otimes \frac{S(e_{2}, ..., e_n)}{(e_{2}^2,...,e_n^2)}. $$
We must now calculate $E_3^{*,*,*}$. The boundary map for $E_2^{*,*,*}$ is determined by $\delta(z_r) = \bar{p} := \bar{e}_{b+1} e_{b+1}+...+ \bar{e}_n e_n$. Observe that $\bar{p}^2=0$. 

Define $$S :=  \begin{cases} \wedge(\bar{e}_{b+1}, ..., \bar{e}_n)  \otimes  \frac{S(e_{b+1}, ..., e_n)}{(e_{b+1}^2,...,e_n^2)} & \text{if $b>0$}\\  \wedge(\bar{e}_{1}, ..., \bar{e}_n)  \otimes S(e_1) \otimes \frac{S(e_{2}, ..., e_n)}{(e_{2}^2,...,e_n^2)} & \text{ if $b=0$} \end{cases}$$  
and denote $Ann(S):= \{ s \in S|ps=0 \}$ the annihilator of $\bar{p}$.  Consider the chain complex

$$ ... \rightarrow^{\delta} S z_r^{[3]} \rightarrow^{\delta} S z_r^{[2]} \rightarrow^{\delta} S z_r \rightarrow^{\delta} S  $$
where the boundary map is $\delta (s z_r^{[d]}) = s \bar{p} z_r^{[d-1]}$ for any $s \in S$.  It is clear from this point of view that
\begin{eqnarray}\label{E3a}
E_3^{*,*,*} & \cong & \wedge(x_{2,r},..., x_{b,r}) \otimes \Big(  \frac{S}{\bar{p}S}  \oplus (\bigoplus_{d=1}^{\infty}  H(S) z_r^{[d]} )  \Big) \\ & =& \wedge(x_{2,r},..., x_{b,r}) \otimes \Big( \frac{ S + Ann(\bar{p}) \otimes \Gamma(z_r) }{\bar{p}S} \Big)
\end{eqnarray}
where $H(S) := Ann(\bar{p})/\bar{p}S$.  Since these generators lift to cycles in $T^{*,*}$ it follows that $E_3^{*,*,*} =E_{\infty}^{*,*,*}$ and that 
\begin{equation}\label{E3}
H(T^{*,*}) = \sum_k E_3^{*,*,k}.
\end{equation}
Furthermore, note that if $\F = \Q$, then $\Gamma(z_r) = S(z_r)$, so we can choose generators of $H(T^{*,*})$ lying in columns $(0,*)$ and $(-1,*)$.

\begin{lem}
$$H(S) = \begin{cases} \F \{\bar{e}_{b+1}, e_{b+1}\} \otimes ... \otimes \F \{\bar{e}_{n}, e_{n}\} & \text{ if $b>0$} \\ 0 & \text{ if $b=0$.}  \end{cases} $$ 
.
 \end{lem}
\begin{proof}  
The group $G :=  \{\pm 1\}^{\{b+1,.., n \}} \cong  (\Z/2)^{n-b}$ acts by automorphisms on $S$ where $g \in G$ acts by 
$$ g \cdot e_i  = g(i) e_i$$ 
$$g \cdot \bar{e}_i  = g(i) \bar{e}_i$$
Since $G$ stabilizes $\bar{p}$, the action restricts to both $Ann(\bar{p})$ and $\bar{p}S$ and thus descends to $H(S)$. Let $g_q \in G$ be the element $$g_q(i) = \begin{cases} 1 & \text{ if $i \neq q$} \\ -1 & \text{ if $i = q$}  \end{cases}$$
and denote $S^{g_q}$ the subring of $g_q$ invariants. Let $\bar{p}_q = \bar{e}_qe_q$. 

Assume that $n> b>0$. Then
$$S^{g_n} = \wedge ( \bar{e}_{b+1},..., \bar{e}_{n-1}) \otimes \frac{S(e_{b+1},...,e_{n-1})}{(e_{b+1}^2,...,e_{n-1}^2)} \otimes \wedge(\bar{p}_n) \cong   \frac{S(e_{b+1},...,e_{n-1})}{(e_{b+1}^2,...,e_{n-1}^2)} \otimes \wedge(\bar{p}) $$  
where in the last step we have changed variables to replace $\bar{p}_n$ with $ \bar{p} = \bar{e}_{b+1} e_{b+1}+...+ \frac{1}{2} \bar{p}_n$. It is clear then that $Ann(\bar{p})^{g_n} = (\bar{p}S)^{g_n}$ so $H(S)^{g_n}$=0. Similarly, $H(S)^{g_q}=0$ for all $q \in \{b+1,...,n\}$. 
It follows that every non-zero element of $H(S)$ transforms by $-1$ under $g_q$ for every $q \in \{b+1,...,n\}$. The corresponding weight space in $S$ is
$$ S_{(-1,..,-1)} = \F \{\bar{e}_{b+1}, e_{b+1}\} \otimes ... \otimes \F \{\bar{e}_{n}, e_{n}\} $$  
which is annihilated by $\bar{p}$, so we have $H(S) = S_{(-1,...,-1)} $.

Next assume that $b=0$. Applying the analogous argument, we deduce that $H(S) = H(S)_{(-1,...,-1)}$. But now
$$ S_{(-1,..,-1)} = \F \{\bar{e}_{1} e_1^{2k}, e_{1}^{2k+1} | k \geq 0\} \otimes  \F \{\bar{e}_{2}, e_{2}\} ... \otimes \F \{\bar{e}_{n}, e_{n}\}. $$
Clearly $ Ann(\bar{p})_{(-1,...,-1)} = Ann(\bar{e}_{1} e_1)_{(-1,...,-1)}  = (\bar{e}_{1} e_1) S = \bar{p} S$ so we conclude $H(S)=0$. 

\end{proof}

Next observe that 
\begin{eqnarray*} 
P_t(S) &=& \frac{1}{t^{2r-1}} P_t(p S) + P_t(Ann(S))\\
&=&  \frac{1+t^{2r-1}}{t^{2r-1}} P_t(p S) + P_t(H(S))\\
\end{eqnarray*}
so
$$ P_t(S/pS) = P_t(S) -P_t(pS) = \frac{P_t(S) - P_t(H(S)t^{2r-1}}{1+t^{2r-1}} .$$
If $n> b>0$, then combining with (\ref{E3a}) and (\ref{E3}) yields 
\begin{eqnarray*}
P_t(H(T^{*,*})) & =& (1+t^{2r-1})^{b-1} ( \frac{t^{2r-2}}{1-t^{2r-2}} P_t(H(S)) + P_t(S/pS) ) \\
 &=& (1+t^{2r-1})^{b-1} \Big(  \frac{t^{2r-2}(\chi t^{r-1}+\chi t^r)^{n-b}}{1-t^{2r-2}}+ \\ && \frac{(1+\chi t^{r-1})^{n-b} (1+\chi t^r)^{n-b} -t^{2r-1}(\chi t^{r-1}+\chi t^r)^{n-b} }{1+t^{2r-1}}  \Big) \\
 &=& (1+t^{2r-1})^{b-2} \Big(  \frac{(t^{2r-2} + t^{2r-1}) (\chi t^{r-1}+\chi t^r)^{n-b}}{(1-t^{2r-2})}+ (1+\chi t^{r-1})^{n-b} (1+\chi t^r)^{n-b}   \Big)
 \end{eqnarray*}
and if $b=0$ we get 
\begin{eqnarray*}
P_t(H(T^{*,*})) &=&  \frac{P_t(S)}{1+t^{2r-1}}  \\
&=&   \frac{ (1+ \chi t^{r-1})^n(1+ \chi t^r)^{n-1}}{(1+t^{2r-1})(1- \chi t^r)} \\
\end{eqnarray*}

In all  cases we see that if $\F = \Q$, then $EM_{2}^{*,*} = H(R^{*,*})\otimes H(T^{*,*})$ is generated by elements in the $(0,*)$ and $(-1,*)$ columns, which implies $EM_2^{*,*}= EM_{\infty}^{*,*} $. The case for general $\F$ in odd characteristic follows by the universal coefficient theorem. This means in particular that
$$ P_t( BS\G_\R) = P_t( EM_2^{*,*}) = P_t(H(S^{*,*}))P_t(H(T^{*,*})). $$

Finally we must consider the action of $C \G_\R/S\G_\R \cong \Z/2$ on $H^*(BS\G_\R)$. Since $\Z/2$ is semisimple over $\F$ (terminology) we have an isomorphism $H^*(BS\G_\R) \cong EM_2^{*,*}$ as graded $\Z/2$-representations.  The action on $EM_2^{*,*}$ sends $e_i \rightarrow -e_i$ and $\bar{e}_i \mapsto -\bar{e}_i$ for all $i \in \{b+1,...,a\}$ and acts trivially on the remaining generators.  This action is trivial on $R^{*,*}$, so we have
$$(EM_2^{*,*})^{\Z/2} = H^*(S^{*,*}) \otimes H(T^{*,*})^{\Z/2}. $$
and 
$$ P_t(BC\G_\R)= \frac{1}{2}P_t(H(S^{*,*}))\Big( P_t^{\chi =1} (H(T^{*,*})) + P_t^{\chi =-1} (H(T^{*,*}))  \Big) .$$

Finally, we define
$$ F_t = P_t(H(S^{*,*})) \frac{1}{(1+ t^{2r-1})^{2\hat{g}}}$$
$$ G_t =   \frac{(1+ t^{2r-1})^{2\hat{g}}}{2} \Big( P_t^{\chi =1} (H(T^{*,*})) + P_t^{\chi =-1} (H(T^{*,*}))  \Big) $$
\end{proof}

\begin{cor}\label{Bettis2r-2}
Let $(\Sigma,\tau)$ be a real curve with $a$ real circles and let $\xi$ be a real line bundle for which $c$ real circles are even and let $r$ be even. Then the polynomial $G_t$ appearing in Theorem \ref{BigLongThm} satisfies
$$ G_t = \beta_{2r-2}t^{2r-2} + \beta_{2r-1} t^{2r-1} +O(t^{2r}) $$ 
where $$ \beta_{2r-2} = \begin{cases}   { c \choose 2}  & \text{ if $c \geq 1$} \\ 1 & \text{ if $c = 0$ and $a \geq 1$} \\ 0 & \text{ if $c=a=0$}  \end{cases} $$
$$ \beta_{2r-1} = \begin{cases}  g+ c^2-c-1 & \text{ If $a > c > 0$ } \\ 
 g-1  & \text{ if $a> c = 0$ and $\Sigma \setminus \Sigma^{\tau}$ is connected}  \\ 
 g & \text{ if $a> c = 0$ and $\Sigma \setminus \Sigma^{\tau}$ is disconnected}  \\ 
 g+ c^2-c &  \text{ if $a= c > 0$ and $\Sigma \setminus \Sigma^{\tau}$ is connected}  \\ 
 g+ c^2-2c &  \text{ if $a= c > 0$ and $\Sigma \setminus \Sigma^{\tau}$ is disconnected}  \\ 
  g &  \text{ if $a= c = 0$}  \\ 
 \end{cases} $$ 
\end{cor}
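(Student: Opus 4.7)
The proof is by direct Taylor expansion of $G_t$ in each of the seven cases of Theorem \ref{BigLongThm} up to degree $2r-1$. The key computational device is the identity
\begin{equation*}
\frac{(1+t^{r-1})^{\alpha}(1+t^{r})^{\beta}+(1-t^{r-1})^{\alpha}(1-t^{r})^{\beta}}{2}=\sum_{i+j\text{ even}}\binom{\alpha}{i}\binom{\beta}{j}\,t^{i(r-1)+jr},
\end{equation*}
which shows that each symmetrized numerator appearing in Theorem \ref{BigLongThm} expands as $1+\binom{\alpha}{2}\,t^{2r-2}+\alpha\beta\,t^{2r-1}+O(t^{2r})$, the only non-constant contributions below $t^{2r}$ coming from $(i,j)\in\{(2,0),(1,1),(0,2)\}$.

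The plan is then the following for each case. First, expand the symmetrized numerator for the appropriate $(\alpha,\beta)$ (or, in degenerate subcases where no symmetrization appears, use the given numerator directly). Second, expand the denominator---one of $1$, $(1-t^{r})$, $(1-t^{2r})$, or $(1-t^{2r-2})$---as a geometric series. Third, multiply by $(1+t^{2r-1})^{g-?}$, whose only contribution to the window $[2r-2,2r-1]$ beyond the leading $1$ is an additive $(g-?)$ at $t^{2r-1}$. Among the denominators, only $(1-t^{2r-2})$ contributes a new term in the window, namely a $1$ at degree $2r-2$; the denominators $(1-t^{r})$ and $(1-t^{2r})$ produce new terms only at or above degree $2r$ for $r\geq 2$ even, so they contribute nothing to $\beta_{2r-2}$ or $\beta_{2r-1}$. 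The extra term $t^{r-1}(t^{r-1}+t^{r})^{c+1}/(1-t^{2r-2})$ appearing in the disconnected, even-$c$ subcase contributes in the window only when $c=0$, since for $c\geq 2$ its leading degree $(r-1)(c+2)$ already exceeds $2r-1$.

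With these observations in hand, each case reduces to a short bookkeeping exercise. For example, in the case $a>c>0$ with $\Sigma\setminus\Sigma^{\tau}$ connected, one substitutes $(\alpha,\beta)=(c,c)$, uses trivial denominator, and multiplies by $(1+t^{2r-1})^{g-c-1}$, producing $\beta_{2r-2}=\binom{c}{2}$ and $\beta_{2r-1}=c^{2}+(g-c-1)=g+c^{2}-c-1$, as claimed. The remaining six cases are handled identically, reading off the correct $(\alpha,\beta)$, denominator, and exponent from Theorem \ref{BigLongThm}.

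The main obstacle is organizational rather than mathematical: all seven cases must be verified separately, and their edge-case variations (the parity of $c$, whether $a=c$ or $a>c$, and whether $\Sigma\setminus\Sigma^{\tau}$ is connected or disconnected) must be tracked carefully, but no single case is individually difficult.
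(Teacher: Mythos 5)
Your method---term-by-term expansion of each case of Theorem \ref{BigLongThm} through degree $2r-1$---is exactly the (implicit) argument the paper relies on, since no separate proof of the corollary is given, and your expansion of the symmetrized numerators is correct. However, one of your blanket simplifications is false: $\frac{1}{1-t^{r}}=1+t^{r}+t^{2r}+\cdots$ begins contributing at degree $r$, not at degree $2r$. For $r=2$ the term $t^{r}=t^{2r-2}$ lands squarely in the window and adds $1$ to $\beta_{2r-2}$ in the case $a=c>0$ connected, while for $r\geq 4$ it produces a nonzero coefficient at degree $r<2r-2$ (harmless for the two coefficients at issue, but contradicting the reason you gave for discarding it).

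The more serious gap is that you verify only one of the seven cases and assert the rest "are handled identically," when in fact carrying out your own procedure on the remaining cases does \emph{not} reproduce the stated table. In the case $a>c=0$ with $\Sigma\setminus\Sigma^{\tau}$ connected, the symmetrized numerator is $1$ and there is no denominator, so $G_t=(1+t^{2r-1})^{g-1}$ and $\beta_{2r-2}=0$, not the claimed $1$ (only the disconnected $c=0$ case, with its $\frac{1}{1-t^{2r-2}}$, yields $\beta_{2r-2}=1$). In the case $a=c>0$ connected, the numerator is the $(\alpha,\beta)=(c,c-1)$ symmetrization, whose $(1,1)$ term contributes $c(c-1)$ at $t^{2r-1}$, so $\beta_{2r-1}=c(c-1)+(g-c)=g+c^{2}-2c$, not $g+c^{2}-c$; indeed the connected and disconnected $a=c$ formulas in Theorem \ref{BigLongThm} differ only in a denominator that cannot affect degree $2r-1$, which is consistent with the proof of Theorem \ref{thm6} (where the first discrepancy between connected and disconnected for $c>0$ is asserted to occur in degree $4r-4$) but not with the corollary's table. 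So either Theorem \ref{BigLongThm} or Corollary \ref{Bettis2r-2} contains misprints, and a correct write-up must either carry out all seven expansions and record the values they actually produce, or identify and repair the misprint; the unverified claim that every case matches the printed table is not a proof.
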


\subsection{Fundamental groups and the proof of Theorems \ref{thm6} and \ref{thm3}}

In this section we compute $\pi_1(BC\G_\R) = \pi_0(C\G_\R) $ and $\pi_1(B\G_\R) = \pi_0(\G_\R)$. 

We begin with $\pi_0(S\G_\R) = \pi_0(S\mathcal{U}_\R)$.  By Lemma \ref{gaugegroupconstr} we have a short exact sequence
$$ Maps_*(X/\partial X, SU(r) ) \rightarrow S\mathcal{U}_\R \rightarrow \prod_{i=1}^n LSU(r)^{\tau_i}  $$
Observe that $X/\partial X$ is a 2-dimensional cell complex and $SU(r)$ is 2-connected, so $Maps_*(X/\partial X, SU(r) )$ is path connected. It follows that
$$ \pi_0( S\G_\R ) \cong  \pi_0(S\mathcal{U}_\R ) \cong \prod_{i=1}^n \pi_0( LSU(r)^{\tau_i}) .$$

For real loop groups of type a and b we have a fibration sequence
$$ \Omega SO(r) \rightarrow LSU(r)^{\tau_i} \rightarrow SO(r). $$
Since $SO(r)$ is connected, we see $\pi_0(LSU(r))$ is the cokernel of a homomorphism $\pi_1(SO(r)) \rightarrow \pi_0(\Omega SO(r))= \pi_1(SO(r))$.
For $r\geq 3$ this is the cokernel of a map $ \Z/2 \rightarrow \Z/2$ which must be $\Z/2$ since $H^1(BLSO(r);\F/2) = \Z_2$.
For $r = 2$, we get the cokernel of a map from $\pi_1(SO(2)) \cong \Z$ to itself which the reader can check gives $\Z$ for type a and $\Z/2$ for type b.

For type c we have $\Omega SU(r) \rightarrow LSU(r)^{\tau_c} \rightarrow SU(r)$  which implies $\pi_0(LSU(r)) =1$. Therefore
\begin{prop}\label{pizerosg}
Suppose $(\Sigma,\tau)$ is a real curve with $a$ real circles and $\xi$ is a real line bundle for which $b$ circles are odd.
We have an isomorphism
$$  \pi_0(S\G_\R) = \pi_1(BS\G_\R) \cong    \begin{cases}  (\Z/2)^a & \text{if $r\geq 3$}\\ (\Z/2)^b \times (\Z)^{a-b} & \text{if $r = 2$}.\\ \end{cases} $$
\end{prop}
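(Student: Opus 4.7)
The strategy is to use the pullback description of $S\mathcal{U}_\R$ from Lemma \ref{gaugegroupconstr} to reduce the problem to computing $\pi_0$ of a product of real loop groups, and then to compute each of these using the evaluation fibration sequence at a basepoint.

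First I would invoke Lemma \ref{gaugegroupconstr} to identify $S\G_\R \simeq S\mathcal{U}_\R \cong S\mathcal{U}(X, r; \tau_1,\dots,\tau_n)$, and combine it with the fibration of topological groups
\[
\mathrm{Maps}_*(X/\partial X, SU(r)) \to S\mathcal{U}(X,r;\tau_1,\dots,\tau_n) \to \prod_{i=1}^n LSU(r)^{\tau_i}.
\]
Because $X/\partial X$ is a 2-dimensional CW complex and $SU(r)$ is $2$-connected, the fiber is path connected, so the long exact sequence of homotopy groups collapses to give
\[
\pi_0(S\G_\R) \cong \prod_{i=1}^n \pi_0(LSU(r)^{\tau_i}).
\]

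Next, I would compute $\pi_0$ of each of the three types of real loop group using the evaluation fibration sequence at the basepoint. For types $\alpha$ and $\beta$ the fibration is $\Omega SO(r) \to LSU(r)^{\tau_i} \to SO(r)$, so $\pi_0(LSU(r)^{\tau_i})$ is the cokernel of a boundary map $\pi_1(SO(r)) \to \pi_0(\Omega SO(r)) = \pi_1(SO(r))$. For $r \geq 3$ both source and target are $\Z/2$; the cokernel must be $\Z/2$ because the already-computed $H^1(BL_\sigma SO(r);\Z_2)\cong \Z_2$ forces $\pi_1(BLSU(r)^{\tau_i}) = \pi_0(LSU(r)^{\tau_i})$ to surject onto $\Z/2$, so the boundary is zero. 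For $r = 2$, both groups are $\Z$: for type $\alpha$ the boundary is trivial (the standard loop group of $SO(2)=S^1$ has $\pi_0 = \Z$ of winding numbers), giving cokernel $\Z$; for type $\beta$ the Möbius twist contributes a factor of $2$ in the boundary, giving cokernel $\Z/2$. For type $\gamma$ the fibration is $\Omega SU(r) \to LSU(r)^{\tau_\gamma} \to SU(r)$ with simply connected base, so $\pi_0(LSU(r)^{\tau_\gamma}) = 0$.

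Finally, Lemma \ref{gaugegroupconstr} assigns a factor of type $\alpha$ to each even real circle (where $E^{\ttau}$ is orientable, hence trivial) and a factor of type $\beta$ to each odd real circle (where $E^{\ttau}$ is nonorientable); type $\gamma$ factors contribute trivially. Writing $a$ for the total number of real circles and $b$ for the number of odd ones, the product formula assembles to $(\Z/2)^a$ when $r\geq 3$ and to $\Z^{a-b} \times (\Z/2)^b$ when $r=2$. The main subtlety is the $r=2$ type $\beta$ computation: one must identify the boundary map in the loop-space fibration for the Möbius real structure as multiplication by $2$, reflecting the fact that windings double when the Möbius band is pulled back to its orientation double cover. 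All other steps are routine given the preparatory results and the material already assembled in this section.
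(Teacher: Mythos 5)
Your proposal is correct and follows essentially the same route as the paper: the short exact sequence with path-connected fiber $\mathrm{Maps}_*(X/\partial X, SU(r))$ reducing to $\prod_i \pi_0(LSU(r)^{\tau_i})$, the evaluation fibrations $\Omega SO(r) \to LSU(r)^{\tau_i} \to SO(r)$ (resp. over $SU(r)$ for type $\gamma$), and the same case analysis of the boundary map, including the appeal to $H^1(BL_\sigma SO(r);\Z_2)\cong\Z_2$ for $r\geq 3$. The only difference is that you spell out the $r=2$ type $\beta$ boundary map (multiplication by $2$ from the M\"obius twist), which the paper leaves as an exercise to the reader.
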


\begin{prop}\label{pizerocg}
 We have an isomorphism 
 $$    \pi_0(C\G_\R ) \cong \Z/2 \ltimes \pi_0(S\G_\R) $$
 where $\Z/2$ acts on $\pi_0(S\G_\R)$ diagonally: trivially on the $\Z/2$ factors and by $-1$ on the $\Z$ factors.
\end{prop}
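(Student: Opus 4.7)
The plan is to leverage the semidirect-product splitting $C\G_\R \cong \R^* \ltimes S\G_\R$ established earlier from the real decomposition $E = E' \oplus L$: recall that $\lambda \in \R^*$ lifts to the real gauge transformation which acts trivially on $E'$ and by scalar multiplication $\lambda$ on $L$. Taking $\pi_0$ of this semidirect product yields immediately
\[
\pi_0(C\G_\R) \cong \pi_0(\R^*) \ltimes \pi_0(S\G_\R) \cong \Z/2 \ltimes \pi_0(S\G_\R),
\]
where the $\Z/2$-action on $\pi_0(S\G_\R)$ is by conjugation by any chosen lift $g$ of $-1 \in \R^*$. This action is well-defined on $\pi_0$ because any two such lifts differ by an element of $S\G_\R$, whose conjugation action is inner and hence trivial on path components.

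The content is then to compute the conjugation action of $g$ on each factor of $\pi_0(S\G_\R) \cong \prod_{i=1}^{n} \pi_0(LSU(r)^{\tau_i})$ identified in Proposition \ref{pizerosg}. Since $g$ comes from a global decomposition of $(E,\ttau)$, its restriction to each boundary circle induces an automorphism of the corresponding real loop group, and one reads off the induced map on $\pi_0$ type-by-type via a local framing analysis of $g$ near the real circle.

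For $r \geq 3$ every factor of $\pi_0(S\G_\R)$ is $\Z/2$, and $\mathrm{Aut}(\Z/2)$ is trivial, so conjugation by $g$ acts trivially on every factor, producing $\Z/2 \ltimes (\Z/2)^a$ with the trivial action as claimed. For $r=2$, odd circles still contribute $\Z/2$ factors on which the action is automatically trivial. On an even circle $S$, $\pi_0(LSU(2)^{\tau_\alpha}) = \pi_0(LSO(2))$ is identified with $\pi_1(SO(2)) = \Z$, and the crucial computation is that $g$ acts as $-1$: in a local $\ttau$-invariant orthonormal framing in which $L^{\ttau}|_S$ contributes the last coordinate, $g$ is pointwise $\mathrm{diag}(1,-1) \in O(2)$, and conjugation by $\mathrm{diag}(1,-1)$ on $SO(2)$ is the inversion $R_\theta \mapsto R_{-\theta}$, which is multiplication by $-1$ on $\pi_1(SO(2))$.

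The main obstacle I anticipate is handling the subcase where $L^{\ttau}|_S$ restricts over the even circle $S$ to a M\"obius bundle (forcing $(E')^{\ttau}|_S$ also to be M\"obius, since their sum $E^{\ttau}|_S$ must be orientably trivial). There the local decomposition does not globally trivialize, so one must choose an oriented global trivialization of $E^{\ttau}|_S$ compatible with the local framing and verify that $g$ retains the pointwise form $\mathrm{diag}(1,-1)$ in that trivialization, after which the $\pi_1(SO(2))$ calculation is unchanged. Assembling these local contributions gives the diagonal $\Z/2$-action that is trivial on every $\Z/2$ factor and multiplication by $-1$ on every $\Z$ factor, completing the proof.
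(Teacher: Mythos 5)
Your proof is correct and follows essentially the same route as the paper: the splitting of $1 \to S\G_\R \to C\G_\R \to \R^* \to 1$ coming from $E = E' \oplus L$ gives the semidirect product on $\pi_0$, and the conjugation action is computed factor-by-factor, trivially on the $\Z/2$ factors (since $\mathrm{Aut}(\Z/2)$ is trivial) and by $-1$ on the $\Z$ factors $\pi_0(LSO(2)) \cong \pi_1(SO(2))$ via conjugation by a pointwise reflection. The M\"obius subcase you flag as an obstacle is in fact harmless: conjugating $R_\theta \in SO(2)$ by \emph{any} determinant $-1$ element of $O(2)$ yields $R_{-\theta}$, so the induced map on $\pi_0(LSO(2)) \cong \Z$ is $-1$ no matter how the pointwise form of $g$ varies in a global trivialization.
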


\begin{proof}[Proof of \ref{pizerocg}]
Since (\ref{SESCSR}) splits we know $ \pi_0(C\G_\R ) \cong \Z/2 \ltimes \pi_0(S\G_\R) $ is a semi-direct product.  In terms of the isomorphism in Proposition \ref{pizerosg}, $\Z/2$ acts by conjugating each $LU(r)^{\tau_i}$ by a constant real matrix with negative determinant, producing an automorphism of  $\pi_0( LSU_{r}^{\tau_i})$. Clearly the automorphism is trivial whenever $\pi_0( LSU_{r}^{\tau_i}) \cong \Z/2$. The one remaining case is when $ LSU_{r}^{\tau_i} = LSO_2$ in which case $\pi_0(LSO_2) = \pi_0(\Omega SO_2) \cong \Z$ and the involution acts by $-1$. 
\end{proof}

\begin{proof}[Proof of Theorem \ref{thm3}]
According to Corollary \ref{toprovetheorem3}, we have an isomorphism $\pi_1( B\overline{C \G}_{\R} ) \cong \pi_1( M(2,\xi)^{\tau})$ and according to (\ref{noncansplit}) we have $\pi_1( BC \G_{\R} )  \cong  \pi_1( B\overline{C \G}_{\R} ) \times \Z/2$. The abelianization of $\pi_1( B C \G_{\R} ) $ is $(\Z/2)^{a+1}$ so $H_1(M(2,\xi)^{\tau}) \cong (\Z/2)^a$. 
\end{proof}

\begin{proof}[Proof of Theorem \ref{thm6}]
Assume that $H(M(r, \xi_1)^{\tau_1};\Z)\cong H(M(r, \xi_2)^{\tau_2};\Z)$. Since $M(r, \xi_i)^{\tau_i}$ is a closed manifold of dimension $(r^2)(2g_i-2)$ it follows that $g_1=g_2$. The equality $a_1=a_2$ follows from Theorem \ref{thm3}. Denote $g:=g_1=g_2$ and $a:= a_1=a_2$.

Assume further that $r$ is even and either $r=2$ and $g_1 \geq 5$ or $r\geq 4$ and $g_1 \geq 3$. Recall that by assumption $gcd(r,d_i)=1$, so $d_i$  is odd.  By (\ref{oddSW}) $a-c_i$ must also be odd, so in particulary $a> c_i$ and $c_1-c_2$ is even. By Corollary \ref{toprovetheorem3} we have an isomorphism $H^k(M(r, \xi)^{\tau};\F) \cong H^k( B\overline{C\G}_\R; \F )$ for all $k \leq g(r-1)-2 \leq 2r-1$. By the universal coefficient theorem and (\ref{noncansplit}) we have isomorphisms
$$ H^k(M(r, \xi)^{\tau}; \F ) \cong  H^k(  B\overline{C\G}_\R; \F ) \cong  H^k(  BC\G_\R; \F) $$
where $\F$ has characteristic $\neq 2$ and $k \leq 2r-1$. Assume without loss of generality that $c_1 \geq c_2$.  From the formula for $\beta_{2r-2}$ in Corollary \ref{Bettis2r-2}, it follows that either $c_1=c_2$, or $c_1 \in \{1,2\}$ and $c_2=0$. The case $c_1=1$ can be dismissed because $c_1-c_2$ must be even. If $c_1=2$ then $\beta_{2r-1} = g+1$ and if $c_2 =0$ then  $\beta_{2r-1} = g$ or $g-1$ so this case also leads to a contradiction.

Assume further that $c= c_1 =c_2$ is even and $g \geq 6$. By the coprime condition and (\ref{d=w}), the number of odd circles must be odd so $a$ must also be odd. For fixed odd $g$ and odd $a$ there is only one possible topological type of $(\Sigma, \tau)$ so the result holds. For fixed even $g$ and odd $a$ there are two topological types for $(\Sigma,\tau)$ distinguished by whether $\Sigma \setminus \Sigma^{\tau}$ is connected or disconnected. By Theorem \ref{BigLongThm}, if $c= 0$ then the corresponding $B C \G_\R$ have different Betti numbers in degree $2r-2$ while if $c>0$ they have different Betti numbers in degree $4r-4$.  Since $2r-2 \leq 4r-4 \leq g(r-1)-2$, it follows that the moduli spaces have different Betti numbers in degree $4r-4$. 
\end{proof}

\section{Equivariant perfection and the proof of Theorem \ref{thm1}}\label{Equivariant perfection}

The goal of this section is to prove the following theorem.

\begin{thm}\label{eqperf}
The real-Harder Narsimhan stratification,
 \begin{equation}\label{RHNstrat}
 \pC^{\ttau} = \cup_{\mu} C_{\mu}^{\ttau}
 \end{equation} 
 is $C\G_\R$-equivariantly perfect with respect to $\Z_2$-coefficients. Consequently the induced map $H_{C\G_\R}(\pC^{\ttau}; \Z_2) \rightarrow H_{C\G_\R}(\pC^{\ttau}_{ss}; \Z_2)$ is surjective.
\end{thm}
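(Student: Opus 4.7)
The plan is to deduce $C\G_\R$-equivariant perfection from the known $\G_\R$-equivariant perfection of \cite{B} by comparing Poincar\'e series across the fibration
\[
X_{hC\G_\R} \longrightarrow X_{h\G_\R} \longrightarrow B\overline{\G(1)}_\R \simeq (S^1)^g,
\]
which arises from the short exact sequence (\ref{defineCG}) and Lemma \ref{homtype1} for any $\G_\R$-space $X$. Since $(S^1)^g$ has a cell decomposition satisfying the hypothesis of Lemma \ref{savetroub}, this fibration gives $P_t(X_{h\G_\R};\Z_2) \leq (1+t)^g P_t(X_{hC\G_\R};\Z_2)$, with equality iff the cohomology pullback is surjective. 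For $X = \pC^{\ttau}$ (contractible), Theorem \ref{surjf} already supplies this equality. Once analogous equality is established for every HN stratum, the $\G_\R$-equivariant Morse--Bott identity divided by $(1+t)^g$ yields the Morse--Bott identity for $C\G_\R$, i.e.\ $C\G_\R$-equivariant perfection.

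The central task is therefore to establish stratum-wise equality in Lemma \ref{savetroub}, which I would do by induction on the rank $r$, strengthening the inductive hypothesis to also cover $\pC^{\ttau}_{ss}$ at each rank. For the inductive step, each non-semistable stratum $\pC_\mu^{\ttau}$ of HN type $\mu = (r_1,d_1,w_1;\ldots;r_k,d_k,w_k)$ with all $r_i<r$ retracts $\G_\R$-equivariantly onto the space of graded Real polystable bundles, whose stabilizer (up to homotopy) is the Levi $H_\mu = \prod_i \G(r_i,d_i,w_i)_\R$. The key subtlety is that $CH_\mu := H_\mu \cap C\G_\R$ is not a product; it is the kernel of the surjective determinant homomorphism $H_\mu \to \overline{\G(1)}_\R$, and it contains $\prod_i C\G(r_i,d_i,w_i)_\R$ with homotopy quotient $\simeq \Z^{g(k-1)}$, the kernel of the diagonal sum $\prod_i \overline{\G(1)}_\R \to \overline{\G(1)}_\R$. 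Applying Lemma \ref{savetroub} to the auxiliary fibration
\[
\prod_i (\pC_{ss}(r_i,d_i,w_i)^{\ttau})_{hC\G(r_i,d_i,w_i)_\R} \longrightarrow (\pC_\mu^{\ttau})_{hCH_\mu} \longrightarrow (S^1)^{g(k-1)},
\]
together with the inductive hypothesis at each $r_i$, yields
\[
P_t((\pC_\mu^{\ttau})_{hCH_\mu}) \leq (1+t)^{g(k-1)} \prod_i \frac{P_t((\pC_{ss}(r_i,d_i,w_i)^{\ttau})_{h\G(r_i,d_i,w_i)_\R})}{(1+t)^g} = \frac{P_t((\pC_\mu^{\ttau})_{hH_\mu})}{(1+t)^g},
\]
where the last equality uses K\"unneth applied to $H_\mu$. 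Combined with the reverse inequality $P_t((\pC_\mu^{\ttau})_{hH_\mu}) \leq (1+t)^g P_t((\pC_\mu^{\ttau})_{hCH_\mu})$ coming from the original fibration (noting $(\pC_\mu^{\ttau})_{h\G_\R} \simeq (\pC_\mu^{\ttau})_{hH_\mu}$ via the Levi retraction), this forces equality.

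With stratum-wise equality in hand, combining it with $\G_\R$-equivariant perfection, Theorem \ref{surjf}, the Morse--Bott inequality for $C\G_\R$, and Lemma \ref{savetroub} applied to $\pC_{ss}^{\ttau}$ forces all these inequalities to be equalities simultaneously, establishing both $C\G_\R$-perfection at rank $r$ and the strengthened inductive hypothesis for $\pC_{ss}^{\ttau}$. The surjection $H^*_{C\G_\R}(\pC^{\ttau};\Z_2) \to H^*_{C\G_\R}(\pC^{\ttau}_{ss};\Z_2)$ claimed in the theorem is then immediate, since equivariant perfection is equivalent to the vanishing of the Thom--Gysin boundary maps. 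The main technical obstacle I anticipate is verifying the real Levi decomposition of each HN stratum---that $\pC_\mu^{\ttau}$ retracts $\G_\R$-equivariantly onto its graded quotient through a contractible unipotent radical, and that the restriction of the determinant homomorphism to $H_\mu$ has the stated kernel and cokernel---which is a real analogue of the Atiyah--Bott parabolic reduction of \cite{AB} \S 7 and should follow from \cite{B} \S 2 with careful bookkeeping.
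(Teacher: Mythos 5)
Your strategy is genuinely different from the paper's (which enlarges $C\G_\R$ to the weakly equivalent group $\widetilde{C\G}_\R$ containing the diagonal elements $(\mathrm{Id},\dots,-\mathrm{Id},\dots,\mathrm{Id})$ and then runs the Atiyah--Bott ``Euler class is not a zero divisor'' criterion, Lemma \ref{aoelurcalorce}, stratum by stratum), but it has a gap at the last step that I do not see how to close. Your squeeze for the unstable strata is fine in outline: the two applications of Lemma \ref{savetroub} give inequalities in opposite directions and force $P_t((\pC_\mu^{\ttau})_{h\G_\R}) = (1+t)^g P_t((\pC_\mu^{\ttau})_{hC\G_\R})$. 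The problem is the semistable stratum at rank $r$. Write $A, B_\mu$ for the $C\G_\R$-equivariant series of $\pC^{\ttau}$ and of the strata, and $A', B'_\mu$ for the $\G_\R$-equivariant ones. At the end you have: $A'=\sum_\mu t^{d_\mu}B'_\mu$ (perfection for $\G_\R$), $A'=(1+t)^gA$ (Theorem \ref{surjf}), $B'_\mu=(1+t)^gB_\mu$ for all unstable $\mu$ (your induction), the Morse--Bott relation $\sum_\mu t^{d_\mu}B_\mu - A = (1+t)Q(t)$ with $Q\ge 0$, and Lemma \ref{savetroub} for $\pC_{ss}^{\ttau}$, i.e.\ $B'_{ss}\le (1+t)^gB_{ss}$. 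Substituting the first three into the fourth gives exactly $B_{ss}-\tfrac{B'_{ss}}{(1+t)^g}=(1+t)Q(t)\ge 0$, which is the \emph{same} inequality as the fifth: both bound $B_{ss}$ from below, and nothing forces $Q=0$. Given everything else you have established, the missing equality $B'_{ss}=(1+t)^gB_{ss}$ is logically equivalent to the perfection you are trying to prove, so the concluding ``forces all these inequalities to be equalities simultaneously'' is circular. To close the loop you would need an independent \emph{upper} bound on $P_t^{C\G_\R}(\pC_{ss}^{\ttau})$, equivalently surjectivity of $H^*_{\G_\R}(\pC_{ss}^{\ttau};\Z_2)\to H^*_{C\G_\R}(\pC_{ss}^{\ttau};\Z_2)$, and that is essentially the content of the theorem.

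A secondary point: your identification of $(\pC_\mu^{\ttau})_{hC\G_\R}$ with the induced space from $CH_\mu$ presupposes that $C\G_\R$ acts transitively on the set of $C^{\infty}$-Real decompositions of a fixed topological type. This is exactly where the paper is forced to replace $C\G_\R$ by $\widetilde{C\G}_\R$ (Lemma \ref{DecompBund}): a gauge transformation carrying one decomposition to another need not have constant determinant, only determinant in the identity component of $\overline{\G(1)}_\R$. You flag the Levi reduction as a technical obstacle but attribute the difficulty to $\G_\R$, where it is unproblematic; the real issue is for $C\G_\R$, and if transitivity fails the stratum breaks into several pieces and the Poincar\'e series bookkeeping changes.
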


The analogous result with $C\G_\R$ replaced with $\G_\R$ was proven in \cite{B}. That proof boils down to showing that the equivariant Euler classes of the normal bundles of each stratum $C_\mu^{\ttau}$ is not a zero divisor in the cohomology ring $H^*(C_\mu^{\ttau};\Z_2 )$. This was accomplished using the following version of the Atiyah-Bott Lemma (Lemma 3.1 from \cite{B}):

\begin{lem}\label{aoelurcalorce}
Let $G$ be a compact connected Lie group with $H^*(G;\Z)$ torsion free. Let $X$ be a $G$-space of finite type and let $E\rightarrow X$ be a $G$-equivariant $\R^n$-vector bundle. Suppose that there exists $\epsilon \in G$ such that
\begin{itemize}
	\item $\epsilon^2$ is the identity in $G$
	\item $\epsilon$ acts trivially on $X$
	\item $\epsilon$ acts by scalar multiplication by $-1$ on $E$.
\end{itemize} 
Then the equivariant Euler class $Eul_G(E)$ is not a zero divisor in $H^*_G(X;\Z_2)$.
\end{lem}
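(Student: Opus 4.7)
My plan is to adapt the Atiyah--Bott argument (\cite{AB}, Proposition~13.4) from their complex setting with a central $S^1$ acting by scalars to our real setting with an involution $\epsilon$ acting by $-1$, replacing rational coefficients and Chern classes by $\Z_2$-coefficients and Stiefel--Whitney classes.

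First I set $H := \langle \epsilon \rangle \cong \Z/2 \leq G$ and consider the restriction $i^* : H^*_G(X;\Z_2) \to H^*_H(X;\Z_2)$. Since $\epsilon$ acts trivially on $X$, the Borel construction decomposes as $X_H = BH \times X$, yielding $H^*_H(X;\Z_2) \cong \Z_2[u] \otimes H^*(X;\Z_2)$ with $|u|=1$. I then compute $i^*\mathrm{Eul}_G(E)$ by invoking the $\Z_2$-cohomology splitting principle in the $G$-equivariant context, via the real flag bundle of $E$, over which $E$ splits as $L_1 \oplus \cdots \oplus L_n$ with each $L_j$ carrying the $\epsilon$-action by $-1$. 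A local trivialization shows $w_1(L_H) = u + w_1(L)$ for any such real line bundle $L$, whence
\[
i^*\mathrm{Eul}_G(E) \;=\; w_n(E_H) \;=\; \prod_{j=1}^n \bigl(u + w_1(L_j)\bigr) \;=\; u^n + u^{n-1} w_1(E) + \cdots + w_n(E),
\]
a monic polynomial of degree $n$ in $u$ with coefficients in $H^*(X;\Z_2)$, and therefore a non-zero-divisor in the polynomial ring $\Z_2[u] \otimes H^*(X;\Z_2)$.

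Next, if $\alpha \cdot \mathrm{Eul}_G(E) = 0$ in $H^*_G(X;\Z_2)$, applying $i^*$ gives $i^*(\alpha) \cdot i^*\mathrm{Eul}_G(E) = 0$, and hence $i^*(\alpha) = 0$ by the previous paragraph. To propagate this back and conclude $\alpha = 0$, I use the hypothesis that $H^*(G;\Z)$ is torsion-free: selecting a maximal torus $T \leq G$ containing $\epsilon$ (available since $G$ is compact connected), Borel's theorem combined with a Leray--Hirsch argument for the fibration $G/T \to X_T \to X_G$ produces an injection $H^*_G(X;\Z_2) \hookrightarrow H^*_T(X;\Z_2)$, and a second Leray--Hirsch argument for the torus-fibration $T/H \to X_H \to X_T$ extends this chain to an injection $H^*_G(X;\Z_2) \hookrightarrow H^*_H(X;\Z_2)$, forcing $\alpha = 0$.

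The main obstacle is this final injectivity step, in particular the second Leray--Hirsch argument: since $T$ acts nontrivially on $X$ in general and $T/H$ is a positive-dimensional torus, the collapse of the Serre spectral sequence for $T/H \to X_H \to X_T$ is not automatic, and the degree-one generators of $H^*(T/H;\Z_2)$ need not all lift to $X_H$. The role of the torsion-free hypothesis on $H^*(G;\Z)$ is precisely to guarantee enough structural regularity (via the universal situation over $BG$, where lifts exist tautologically) to force the relevant Leray--Hirsch classes to exist on $X_H$; tracking these lifts through the base-change $X_G \to BG$ is the technical heart of the argument.
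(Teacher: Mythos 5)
The paper does not give its own proof of this lemma; it quotes it verbatim from \cite{B} (Lemma 3.1 there). Measured against that argument, the first half of your proposal is exactly right: the reduction, via torsion-freeness of $H^*(G;\Z)$, to a maximal torus $T\ni\epsilon$, and the computation that over a base on which $\epsilon$ acts trivially the equivariant bundle becomes $L\otimes E$ with $L$ the tautological line bundle over $B\langle\epsilon\rangle$, so that its top Stiefel--Whitney class is the monic polynomial $u^n+w_1(E)u^{n-1}+\cdots+w_n(E)$, hence a non-zero-divisor in $\Z_2[u]\otimes H^*(X;\Z_2)$.

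The gap is your final injectivity claim, and it is not a technical difficulty to be finessed: the statement you need is false. The restriction $H^*_G(X;\Z_2)\to H^*_{\langle\epsilon\rangle}(X;\Z_2)$ is not injective in general, even for $X$ a point and $H^*(G;\Z)$ torsion-free. For $G=SU(2)\times SU(2)$ and $\epsilon=(-I,-I)$, the map $\Z_2[c_2,c_2']\to\Z_2[u]$ sends both degree-four generators to $u^4$ and kills $c_2+c_2'$; the failure already occurs at the torus stage, since $\Z_2[t_1,t_2]=H^*(B(S^1)^2;\Z_2)\to\Z_2[u]$ sends each $t_i$ to $u^2$ or to $0$ and so has a large kernel whenever $\dim T\geq 2$. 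Consequently no system of Leray--Hirsch lifts for the fibration with fibre $T/\langle\epsilon\rangle$ can exist (that fibre is a torus of dimension $\dim T$, and its $H^1$ is simply too big to be accounted for); the torsion-free hypothesis cannot rescue this. The repair is to restrict to a larger subgroup: write $T\cong S^1\times T'$ with $\epsilon=(-1,1)$, which is possible because $\epsilon$ has order two, and set $K=\langle\epsilon\rangle\times T'$. Then $T/K\cong S^1$, the generator of $H^1$ of this circle fibre is the restriction of the class $u$ pulled back from $B\langle\epsilon\rangle$, so Leray--Hirsch does apply and exhibits $H^*_K(X;\Z_2)\cong\Z_2[u]\otimes H^*_{T'}(X;\Z_2)$ as a free $H^*_T(X;\Z_2)$-module with basis $\{1,u\}$. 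Your monic-polynomial computation, run with coefficient ring $H^*_{T'}(X;\Z_2)$ in place of $H^*(X;\Z_2)$, then closes the argument.
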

Unfortunately, the required element $\epsilon \in \G_\R$ does not lie in $C\G_\R$. For that reason we must replace $C \G_\R$ with a larger group containing $\epsilon$. Recall (\ref{defineCG}) that $C \G_\R$ is equal to the kernel of the natural homomorphism $ \G_{\R} \rightarrow \overline{\G(1)}_{\R}$.  Define $\widetilde{C\G}_\R$ by the short exact sequence
$$  1 \rightarrow \widetilde{C\G}_\R \rightarrow \G_\R \rightarrow \pi_0(\overline{\G(1)}_{\R}) \rightarrow 1.$$

\begin{prop}
The  inclusion $C\G_\R \hookrightarrow \widetilde{C \G}_\R$ is a weak homotopy equivalence. Consequently, (\ref{RHNstrat}) is $C\G_\R$-equivariantly perfect if and only if it is  $\widetilde{C \G}_\R$-equivariantly perfect.
\end{prop}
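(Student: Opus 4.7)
The plan is to produce a short exact sequence of topological groups
\begin{equation*}
1 \to C\G_\R \to \widetilde{C\G}_\R \to (\overline{\G(1)}_\R)_0 \to 1,
\end{equation*}
where $(\overline{\G(1)}_\R)_0$ denotes the identity component of $\overline{\G(1)}_\R$. This is immediate from the definitions: $C\G_\R$ is the kernel of $\G_\R \to \overline{\G(1)}_\R$ while $\widetilde{C\G}_\R$ is the preimage of $\pi_0(\overline{\G(1)}_\R)$, so $C\G_\R$ is the kernel of $\widetilde{C\G}_\R \to (\overline{\G(1)}_\R)_0$. Surjectivity of the latter map follows from the surjectivity of $\G_\R \to \overline{\G(1)}_\R$ established below (\ref{defineCG}) via the Whitney decomposition (\ref{decompofrealbundles}).

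I would then invoke Lemma \ref{homtype1}, which asserts that $(\overline{\G(1)}_\R)_0$ is contractible. The long exact homotopy sequence of the displayed fibration immediately yields $\pi_k(C\G_\R) \cong \pi_k(\widetilde{C\G}_\R)$ in every degree, proving that the inclusion is a weak homotopy equivalence.

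For the ``Consequently'' clause, the weak equivalence of groups passes to a weak equivalence $BC\G_\R \simeq B\widetilde{C\G}_\R$ of classifying spaces. For any $\widetilde{C\G}_\R$-space $Y$ — and in particular for $\pC^{\ttau}$ and for each stratum $\pC^{\ttau}_\mu$ — comparing the Borel fibrations
\begin{equation*}
Y \to Y_{hC\G_\R} \to BC\G_\R, \qquad Y \to Y_{h\widetilde{C\G}_\R} \to B\widetilde{C\G}_\R
\end{equation*}
via the five lemma shows the natural map $Y_{hC\G_\R} \to Y_{h\widetilde{C\G}_\R}$ is a weak equivalence, and therefore induces an isomorphism on $\Z_2$-equivariant cohomology. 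Since equivariant perfection of (\ref{RHNstrat}) is the identity $P_t(\pC^{\ttau}_{hG};\Z_2) = \sum_\mu t^{d_\mu} P_t((\pC^{\ttau}_\mu)_{hG};\Z_2)$, the stratum-by-stratum agreement of equivariant Betti numbers implies that perfection holds for $C\G_\R$ if and only if it holds for $\widetilde{C\G}_\R$.

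The argument reduces to bookkeeping around a contractible quotient, so I anticipate no substantive obstacle. The one point requiring a touch of care is verifying that the surjection $\G_\R \to \overline{\G(1)}_\R$ restricts to a surjection $\widetilde{C\G}_\R \to (\overline{\G(1)}_\R)_0$ onto the full identity component, which is what identifies the cokernel in the short exact sequence above.
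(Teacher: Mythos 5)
Your argument is correct and is essentially the paper's own proof: both identify the quotient $\widetilde{C\G}_\R/C\G_\R$ with the identity component of $\overline{\G(1)}_\R$ and then invoke Lemma \ref{homtype1} to conclude that this quotient is contractible, so the inclusion is a weak equivalence. Your extra detail (the long exact homotopy sequence and the comparison of Borel fibrations for the ``Consequently'' clause) is just an expansion of what the paper leaves implicit.
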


\begin{proof}
It is clear from the definition that the coset space $ \widetilde{C \G}_\R/ C\G_\R $ homeomorphic to the identity component of $\overline{\G(1)}_{\R}$, which was proven to be contractible in Lemma \ref{homtype1}.
\end{proof}

\begin{lem}\label{surjonpione}
For every splitting $ (E,\ttau) = (D_1, \ttau_1) \oplus ... \oplus (D_k,\ttau_k) $ into $C^{\infty}$-Real bundles we have a surjection

$$ \pi_0(\G(D_1)_\R) \times ... \times \pi_0(\G(D_k)_\R) \rightarrow \pi_0(\G(E)_\R). $$

\end{lem}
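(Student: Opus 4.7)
The plan is to proceed by induction on $k$, with the base case $k=2$ being the main content. Granting $k=2$, one decomposes $(E,\ttau) = (D_1, \ttau_1) \oplus (E', \ttau')$ with $(E',\ttau') := \bigoplus_{i\geq 2}(D_i,\ttau_i)$, applies the $k=2$ statement to $(D_1, E')$ to obtain a surjection $\pi_0(\G(D_1)_\R) \times \pi_0(\G(E')_\R) \to \pi_0(\G(E)_\R)$, and then invokes the inductive hypothesis on $\G(E')_\R$ to split the second factor further.

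For $k=2$, I would introduce the parabolic subgroup $P_\R \leq \G(E)_\R$ consisting of Real gauge transformations preserving $D_1 \subseteq E$. Writing elements in block form relative to the splitting, $P_\R$ is the group of upper-triangular real gauge transformations, yielding a semidirect product identification
\[
P_\R \cong (\G(D_1)_\R \times \G(D_2)_\R) \ltimes \Hom_\R(D_2, D_1),
\]
where $\Hom_\R(D_2,D_1)$ denotes Real $C^{\infty}$-bundle homomorphisms. Since $\Hom_\R(D_2,D_1)$ is a real affine space and therefore contractible, the inclusion of the Levi factor $\G(D_1)_\R \times \G(D_2)_\R \hookrightarrow P_\R$ is a homotopy equivalence. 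It thus reduces to showing that $\pi_0(P_\R) \to \pi_0(\G(E)_\R)$ is surjective.

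For this, I would consider the coset space $\mathcal{F} := \G(E)_\R/P_\R$, the $\G(E)_\R$-orbit of $D_1$, which is naturally the space of Real $C^{\infty}$-subbundles of $E$ topologically equivalent to $D_1$. The homotopy long exact sequence of the fibration $P_\R \to \G(E)_\R \to \mathcal{F}$ then reduces the problem to showing that $\mathcal{F}$ is path-connected. Choosing a $\ttau$-invariant Hermitian metric on $E$, such subbundles correspond to Real sections of the Grassmann bundle $\mathrm{Gr}_{\rk(D_1)}(E) \to \Sigma$, whose fibers are path-connected (complex Grassmannians at non-real points and real Grassmannians at real points). The classification of $C^{\infty}$-Real bundles from \cite{BHH} asserts that rank and the first Stiefel--Whitney class of the real locus are a complete set of invariants, so Real subbundles with the same invariants lie in the same path-component of $\mathcal{F}$.

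The main obstacle is this final step: the connectedness of $\mathcal{F}$ requires the smooth (parametric) classification of Real $C^{\infty}$-subbundles of a fixed Real bundle, not merely of abstract Real bundles. This is essentially a relative version of the classification in \cite{BHH}; the remainder of the argument is a formal reduction via the orbit-stabilizer fibration and the Levi--unipotent decomposition of $P_\R$.
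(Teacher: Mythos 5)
Your reduction steps are fine as far as they go: the induction on $k$, the identification of the stabilizer $P_\R$ of $D_1$ with $(\G(D_1)_\R\times\G(D_2)_\R)\ltimes \Hom_\R(D_2,D_1)$ together with its retraction onto the Levi factor, and the passage to the orbit $\mathcal{F}=\G(E)_\R/P_\R$ are all legitimate. But the argument stops exactly where the content of the lemma lies. Since any Real subbundle $D_1'\subseteq E$ with the same rank, degree and real Stiefel--Whitney data as $D_1$ admits a Real complement with the same invariants as $D_2$, the classification of \cite{BHH} shows that the orbit $\mathcal{F}$ is the \emph{entire} space of such subbundles; consequently ``$\mathcal{F}$ is path-connected'' is equivalent to the surjectivity you are trying to prove, not a consequence of the abstract classification. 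Two abstractly isomorphic Real subbundles need not lie in the same component of the space of subbundles: a path of Real subbundles is an equivariant homotopy of sections of the Grassmann bundle, and restricting it to a real circle $S$ gives a homotopy of sections of the real Grassmannian bundle over $S$. When that fibre has infinite fundamental group (e.g. $\R P^1$, for a line subbundle of a rank-two bundle restricted to a real circle), the section carries an integer winding invariant that is strictly finer than $w_1$ of the subbundle, so connectedness of $\mathcal{F}$ requires showing that this extra invariant can be altered by an equivariant homotopy supported away from $\Sigma^{\tau}$. That is exactly the ``relative classification'' you flag as the main obstacle, and nothing in the proposal addresses it; as written the argument is circular at this step.

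The paper's proof avoids the geometry of subbundles entirely. It uses the determinant sequence $1\to C\G_\R\to\G_\R\to\overline{\G(1)}_\R\to 1$, the fact (Lemma \ref{homtype1}) that $\overline{\G(1)}_\R$ is homotopy equivalent to the discrete group $\Z^g$, and the explicit computation of $\pi_0(C\G_\R)\cong\Z/2\ltimes\pi_0(S\G_\R)$ with $\pi_0(S\G_\R)\cong\prod_i\pi_0(LSU(r)^{\tau_i})$ coming from the loop-group pullback model (Propositions \ref{pizerosg} and \ref{pizerocg}); surjectivity is then extracted by a diagram chase on the resulting short exact sequences of component groups. If you want to salvage your route, you would have to establish the connectedness of the equivariant section space directly by equivariant obstruction theory over the cell structure of $\Sigma$ adapted to $\tau$, which in effect reproduces that $\pi_0$ computation in different language; citing the absolute classification of \cite{BHH} is not enough.
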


\begin{proof}

It suffices to show that the resticted map $i: \pi_0(\G(D_1)_\R) \rightarrow \pi_0(\G(E)_\R)$ is surjective. Consider the short exact sequence (\ref{defineCG}).  Since $\overline{\G(1)}_\R$ is a $K(\Z^g, 1)$, we have a diagram
$$\xymatrix{  1 \ar[r] & \pi_0( C\G(D_1)_\R) \ar[d]^{i'} \ar[r]& \pi_0(\G(D_1)_\R) \ar[d]^i \ar[r] & \pi_0(\overline{\G(1)}_\R) \ar[d]^= \ar[r]& 1 \\ 
1 \ar[r] & \pi_0( C\G(E)_\R) \ar[r]& \pi_0(\G(E)_\R) \ar[r]& \pi_0(\overline{\G(1)}_\R) \ar[r]& 1} . $$
The surjectivity of $i'$ is evident from the description in Proposition \ref{pizerocg}. The surjectivity of $i$ follows.

\end{proof}

The following lemma is necessary for the induction step in the calculation of Betti numbers.

\begin{lem}\label{DecompBund}
Let $(D_1, \ttau_1) \oplus ... \oplus (D_k,\ttau_k) = (E,\ttau) = (D_1', \ttau_1') \oplus ... \oplus (D_k',\ttau_k')$ be two different decompositions of $E$ into $C^{\infty}$-Real subbundles, such that $(D_i,\ttau_k)  \cong (D_i',\ttau_k') $ for all $i$. Then there exists $ g \in \widetilde{C\G(E)}_\R$ such that $g(D_i) = D_i'$ for all $i$.
\end{lem}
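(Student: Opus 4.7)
The plan is to first produce any Real gauge transformation carrying one decomposition to the other, and then correct it by an automorphism preserving the target decomposition so that the resulting element has trivial image in $\pi_0(\overline{\G(1)}_\R)$ and therefore lies in $\widetilde{C\G(E)}_\R$.

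First, I would use the hypothesis $(D_i,\ttau_i) \cong (D_i',\ttau_i')$ to choose Real isomorphisms $\phi_i \colon D_i \to D_i'$ for each $i$ and set $\phi := \phi_1 \oplus \cdots \oplus \phi_k \in \G(E)_\R$. By construction $\phi(D_i) = D_i'$ for all $i$, but in general $\phi$ need not belong to $\widetilde{C\G(E)}_\R$: its class $[\phi]$ in $\pi_0(\overline{\G(1)}_\R) \cong \Z^g$, obtained from the determinant homomorphism of (\ref{defineCG}) followed by projection onto $\pi_0$, may be nonzero.

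The key step is to verify that the composite
\[
 \prod_{i=1}^k \pi_0(\G(D_i')_\R) \longrightarrow \pi_0(\G(E)_\R) \longrightarrow \pi_0(\overline{\G(1)}_\R)
\]
is surjective. The first arrow is surjective by Lemma \ref{surjonpione} applied to the decomposition $E = D_1' \oplus \cdots \oplus D_k'$, and the second arrow is surjective because the continuous group homomorphism $\G(E)_\R \to \overline{\G(1)}_\R$ is surjective (as remarked after (\ref{defineCG})) and any surjective continuous homomorphism of topological groups induces a surjection on $\pi_0$. Given this, I can choose $u_i \in \G(D_i')_\R$ so that $u := u_1 \oplus \cdots \oplus u_k \in \G(E)_\R$ represents the class $-[\phi] \in \pi_0(\overline{\G(1)}_\R)$.

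Finally, set $g := u \circ \phi \in \G(E)_\R$. Since each $u_i$ is an automorphism of $D_i'$, we get $g(D_i) = u(\phi(D_i)) = u(D_i') = D_i'$ for all $i$. Moreover, because $\overline{\G(1)}_\R$ is abelian and the determinant is a group homomorphism, $[g] = [u] + [\phi] = 0$ in $\pi_0(\overline{\G(1)}_\R)$, so $g \in \widetilde{C\G(E)}_\R$ as required. The only point that involves any work is the surjectivity of the displayed composite, which reduces immediately to Lemma \ref{surjonpione} together with the elementary $\pi_0$-surjectivity fact for topological groups; I do not anticipate a genuine obstacle beyond bookkeeping.
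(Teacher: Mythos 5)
Your proposal is correct and follows essentially the same route as the paper: build the obvious Real gauge transformation by summing the isomorphisms $D_i \cong D_i'$, then use Lemma \ref{surjonpione} to correct it by an element of $\G(D_1')_\R \times \cdots \times \G(D_k')_\R$ preserving the target decomposition. The only cosmetic difference is that the paper arranges the corrected element to lie in the identity component of $\G_\R$, whereas you arrange only that its class in $\pi_0(\overline{\G(1)}_\R)$ vanishes, which is exactly the defining condition for membership in $\widetilde{C\G}_\R$.
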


\begin{proof}
It is clear that simply by summing together this isomorphisms $D_i \cong D_i'$ that we can find a gauge transformation $g \in \G(E)_\R$ satisfying $g(D_i) = D_i'$. The only question is whether we can choose $g \in \widetilde{C\G_\R}$.  But by Lemma \ref{surjonpione}, we can compose $g$ by an element of $h \in \G(D_1')_\R \times ... \times \G(D_k')_\R$ so that $hg$ lies in the identity component of $\G_\R$ hence must also lie in $\widetilde{C\G_\R}$.
\end{proof}

\begin{proof}[Proof of Theorem \ref{eqperf}]

By Lemma \ref{DecompBund}, $C\G_{\R}$ acts transitively on the set of decompositions $(D_1, \ttau_1) \oplus ... \oplus (D_k,\ttau_k)$ of a given topological type. It follows that there is a homotopy equivalence of homotopy quotients 
\begin{equation}\label{stratasplitting}
(C_{\mu}^{\ttau})_{h \widetilde{C\G}_\R^{\ttau}} \cong  (\pC_{ss}^{\tau_1} \times ... \times \pC_{ss}^{\tau_k})_{h\widetilde{C\G}(D_1,...,D_k)_\R}
\end{equation} 
where $$\widetilde{C\G}(D_1,...,D_k)_\R = \G(D_1)_\R \times...\times \G(D_k)_\R \cap \widetilde{C\G}_\R.$$ 

Choose a basepoint $p \in \Sigma$ that is not fixed by $\tau$. Then restricting gauge transformations to the fiber over $p$ determines a short exact sequence
$$1 \rightarrow  \widetilde{C\G}_{bas}(D_1,...,D_k)_\R \rightarrow \widetilde{C\G}(D_1,...,D_k)_\R \rightarrow \prod_{i=1}^k GL_{r_i}(\C) \rightarrow 1 $$ 
By forming the homotopy quotient in stages we get an isomorphism
$$ H^*_{ \widetilde{C\G}_\R^{\ttau}}( C_{\mu}^{\ttau} ;\Z_2  )  \cong H^*_U(  (\pC_{ss}^{\tau_1} \times ... \times \pC_{ss}^{\tau_k})_{h\widetilde{C\G}_{bas} (D_1,...,D_k)_\R} ; \Z_2 ) $$
where $U = U(r_1) \times...\times U(r_k)$ is the maximal compact subgroup of $\prod_{i=1}^k GL_{r_i}(\C)$. As explained in (\cite{B} (2.13)), the normal bundle decomposes into a direct sum of subbundles $N = \bigoplus_{i < j} N_{i,j}$ where the fibre  $(N_{i,j})_{(\bar{\partial}_1,...\bar{\partial}_k)} \cong H^1( D_i^* \otimes D_j, \bar{\partial}_1^* \otimes Id_{D_j} + Id_{D_i^*} \otimes \bar{\partial}_2))^{\tau_i^* \otimes \tau_j}$. 
The element $(Id_{r_1},...,-Id_{r_i},...,Id_{r_k}) \in U$ acts by $-1$ on the summand $N_{i,j}$ and trivially on the base so by Lemma \ref{aoelurcalorce}, the equivariant Euler class $Eul_U(N) = \prod_{i< j} Eul_U(N_{i,j})$ is not a zero divisor in $H^*_{ \widetilde{C\G}_\R^{\ttau}}( C_{\mu}^{\ttau} ;\Z_2  )$.
\end{proof}

\begin{proof}[Proof of Theorem \ref{thm1}]
Combining Theorem \ref{eqperf}  and Theorem \ref{surjf} implies that the composed map $H^*(B \G_\R;\Z_2 ) \rightarrow H^*( (\pC_{ss})_{hC\G_\R^{\ttau}};\Z_2)$ is surjective. From (\ref{noncansplit}) and Corollary \ref{homotequivMC}, it follows that $H^*(B \overline{C\G}_\R;\Z_2 ) \rightarrow H^*( M(r,\xi)^{\tau};\Z_2)$ is also surjective. From (\ref{LHdiagram}) $H^*(M(r,d)^{\tau}_w;;\Z_2 ) \rightarrow H^*( M(r,\xi)^{\tau};\Z_2)$ is also surjective, so the result follows by the Leray-Hirsch Theorem.
\end{proof}

\section{ Proof of Theorem \ref{thm4}}\label{Proof of thm4}

The following is adapted mutatis-mutandis from \cite{B3}.  See \S 4 of that paper for a more detailed proof. The idea is simply that the normal bundles of the unstable strata are non-orientable, which implies that their Thom spaces must be acyclic, so they contribute nothing the the Morse complex. 

\begin{prop}
Let $(E,\ttau)$ be a Real bundle of rank 2 and let $\F$ be a field of odd characteristic. Suppose that $g \equiv d~mod~2$. Then there is an isomorphism
$$ H^*(B C\G_{\R}; \F)  \cong H^*_{C\G_{\R}}( \pC_{ss}^{\ttau};\F). $$
\end{prop}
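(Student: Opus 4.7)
The plan is to use the real Harder--Narasimhan stratification $\pC^{\ttau} = \cup_\mu \pC_\mu^{\ttau}$ and show that every unstable stratum contributes trivially to $C\G_\R$-equivariant cohomology with $\F$ coefficients, so that the inclusion $\pC^{\ttau}_{ss} \hookrightarrow \pC^{\ttau}$ induces an iso $H^*_{C\G_\R}(\pC^{\ttau};\F) \cong H^*_{C\G_\R}(\pC^{\ttau}_{ss};\F)$. Since $\pC^{\ttau}$ is contractible, the left side is $H^*(BC\G_\R;\F)$, yielding the statement. The mechanism is the Thom--Gysin long exact sequence of each stratum pair: if the $C\G_\R$-equivariant Thom space of the normal bundle $N_\mu$ is $\F$-acyclic, then the inclusion of the complement of $\pC_\mu^{\ttau}$ is an $\F$-cohomology equivalence, and induction on codimension does the rest.

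For rank $2$, an unstable stratum corresponds to a Real line subbundle $L_1 \hookrightarrow E$ with $d_1 := \deg(L_1) > \deg(L_2) =: d_2$ and $L_2 = E/L_1$. Every short exact sequence of smooth Real bundles over $(\Sigma,\tau)$ splits Real-$C^\infty$, so we may fix a Real decomposition $(E,\ttau) = (L_1,\ttau_1) \oplus (L_2,\ttau_2)$. As in the proof of Theorem \ref{eqperf} (compare (\ref{stratasplitting})), there is a homotopy equivalence
$$(\pC_\mu^{\ttau})_{h C\G_\R} \simeq \bigl(\pC(L_1)^{\ttau_1} \times \pC(L_2)^{\ttau_2}\bigr)_{hL}$$
where $L$ is the Levi-type subgroup $\bigl(\G(L_1)_\R \times \G(L_2)_\R\bigr) \cap C\G_\R$. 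The element $\epsilon = \mathrm{diag}(1,-1)$ has constant determinant $-1$, commutes with $\ttau = \ttau_1 \oplus \ttau_2$, and hence lies in $L \subseteq C\G_\R$; it is central in $L$, acts trivially on the base $\pC(L_1)^{\ttau_1} \times \pC(L_2)^{\ttau_2}$, and acts fiberwise by scalar $-1$ on $N_\mu$, whose real rank equals the complex codimension $(d_1-d_2) + (g-1)$ of $\pC_\mu$ in $\pC$.

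The parity hypothesis $g \equiv d \pmod 2$ (together with $d_1 - d_2 \equiv d_1 + d_2 = d \pmod 2$) makes the codimension $(d_1-d_2)+(g-1)$ odd. Therefore $\epsilon$ acts on the orientation local system $\mathcal{L}_{N_\mu}$ by $(-1)^{\mathrm{codim}}=-1$. Writing the homotopy quotient by $L$ in two stages via the central subgroup $\langle \epsilon \rangle \cong \Z/2$ gives a fibration
$$X \times B\Z/2 \to X_{hL} \to B(L/\langle\epsilon\rangle),$$
whose fiber cohomology with the twisted coefficients is $H^*(X;\F) \otimes H^*(B\Z/2;\F_-) = 0$, since $\Z/2$ is semisimple in odd characteristic and $\F_-$ is its nontrivial irreducible. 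The Serre spectral sequence then forces $H^*_L\bigl(\pC(L_1)^{\ttau_1}\times \pC(L_2)^{\ttau_2};\mathcal{L}_{N_\mu}\otimes \F\bigr)=0$, and via the equivariant Thom isomorphism this is precisely the $C\G_\R$-equivariant $\F$-cohomology of $\mathrm{Th}(N_\mu)$. Inducting on strata through the Thom--Gysin sequences completes the proof.

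The main obstacle, and the crucial computational input, is the parity verification of the codimension: without the hypothesis $g \equiv d \pmod 2$, the codimension may be even, in which case $\epsilon$ acts trivially on $\mathcal{L}_{N_\mu}$ and this method fails. Everything else---the existence of the Real $C^\infty$ splitting, centrality of $\epsilon$ in $L$, and the homotopy identification of the stratum---is either standard from the real Atiyah--Bott picture in \cite{B} or a direct consequence of the topological classification of Real bundles in \cite{BHH}.
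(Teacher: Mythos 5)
Your proposal is correct and follows essentially the same route as the paper: both identify the unstable strata via the Real splitting $E = L_1 \oplus L_2$, use the constant-determinant element $1 \oplus -1$ acting trivially on the stratum and by $-1$ on the odd-rank normal bundle (the parity coming from $g \equiv d \bmod 2$, which the paper leaves implicit in the phrase ``has odd rank''), and conclude that the equivariantly non-orientable normal bundle has $\F$-acyclic Thom space in odd characteristic. The only cosmetic difference is that you establish the acyclicity by a two-stage homotopy quotient through the central $\Z/2$ generated by $\epsilon$, whereas the paper identifies the stratum's homotopy quotient as $(S^1)^g \times (\R P^{\infty})^2$ and invokes acyclicity of nontrivial rank-one local systems on that $K(\pi,1)$.
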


\begin{proof}
The action of $C \G_\R$ preserves the real Harder-Narasimhan stratification $\pC^{\ttau} = \bigcup_{\mu} \pC_\mu^{\ttau} $, and determines a stratification
$$ BC\G_{\R} \cong (\pC^{\ttau} )_{h C\G_\R} =  \bigcup_{\mu} (\pC_\mu^{\ttau} )_{h C\G_\R} = (\pC_{ss}^{\ttau} )_{h C\G_\R} \cup  \Big(  \bigcup_{\mu \neq ss} (\pC_\mu^{\ttau} )_{h C\G_\R}  \Big)$$
Since $E$ has rank $2$ the higher strata correspond to $C^{\infty}$-decompositions into Real line bundles $E = L_1\oplus L_2$. The corresponding strata have the form ((\ref{stratasplitting}) up to homotopy)
$$ (\pC_\mu^{\ttau} )_{h C \G_{\R}} \cong  (\pC(L_1)_{ss}^{\ttau_1} \times \pC(L_2)^{\ttau_2})_{h C \G(L_1,L_2)_{\R}} .$$
where $C \G(L_1,L_2)_{\R} := ( \G(L_1)_\R \times \G(L_2)_\R) \cap C\G_\R$ . 
For line bundles, $\pC(L_i)^{\ttau_i}_{ss} =\pC(L_i)^{\ttau_i}$ is contractible and we have an isomorphism $\G(1)_{\R} \times \R^* \cong C \G(L_1,L_2)_{\R} $ defined by $ (g, \lambda) \mapsto g \oplus \lambda g^{-1}$ so by Lemma \ref{homtype1} and the fact that $\G(1)_\R \cong  \overline{\G_\R} \times \R^*$ we have
$$  (\pC_\mu^{\ttau} )_{h C \G_{\R}}  \cong  B \G(1)_\R  \cong (S^1)^g \times (\R P^{\infty})^2 = K( Z^{2g} \times (\Z/2)^2, 1).$$
It follows that for every non-trivial 2-fold covering map over $(\pC_\mu^{\ttau})_{h C \G_{\R}} $ induces an isomorphism in $\F$-cohomology or equivalently, that every non-trivial rank one $\F$-local system over $(\pC_\mu^{\ttau} )_{h C \G_{\R}} $ is acyclic. This implies that the Thom space of any non-orientable vector bundle over $ (\pC_\mu^{\ttau} )_{h C \G_{\R}}$ must be $\F$-acyclic. The normal bundle of $\pC_\mu^{\ttau}$ has odd rank and the constant scalar $1 \oplus -1 \in C \G(L_1,L_2)_{\R} $ acts by scalar multplication by $-1$ on $N$, so the restriction of $N_{h C \G_{\R}}$ to the second factor of $\R P^{\infty}$ is non-orientable, thus the Thom space of $N_{h C \G_{\R}}$ is $\F$-acyclic. Since this is true for every stratum except the semi-stable stratum, the result is proven. 
\end{proof}

\begin{proof}[Proof of Theorem \ref{thm4}]
Since $\R P^{\infty}$ is $\F$-acyclic, by Corollary \ref{homotequivMC} we have $$H^*(B C\G_{\R}; \F)  \cong H^*_{C\G_{\R}}( \pC_{ss}^{\ttau};\F) \cong  H^*( M(2,\xi) \times \R P^{\infty} ;\F)\cong H^*( M(2,\xi) ;\F).$$ The Poincar\'e series can be read off from Theorem \ref{BigLongThm}.
\end{proof}

\section{Betti numbers}\label{Betti numbers and fundamental groups}

We are now able to compute some Poincar\'e polynomials. For rank $r=1$, $M(1,\xi) = M(1,\xi)^{\tau}$ is just a point. The first interesting case is when $r=2$.

\begin{prop}
Let $\Sigma$ be a genus $g$ real curve with $a >0$ real path components and let $\xi$ be a Real line bundle over $(\Sigma, \tau)$ of odd degree. The moduli space $M(2,d, \tau)$ of real bundles of rank two, odd degree $d$ and fixed topological type has Poincar\'e series

\begin{equation}\label{ercaboecub}
P_t(M(2,\xi)^{\tau}; \Z_2)= \frac{(1+t)^{a-1}(1+t^2)^{a-1}(1+t^3)^{g-\alpha} - 2^{a-1} t^{g}(1+t)^{g}}{(1-t)(1-t^2)}
\end{equation}
where $a = \pi_0(\Sigma^\tau)$. 
\end{prop}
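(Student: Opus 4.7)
The plan is to derive \eqref{ercaboecub} as a direct consequence of Theorem~\ref{thm1} in the case $r = 2$, combined with the mod 2 Poincar\'e series of $M(2,d)^{\tau}_w$ previously obtained by Liu-Schaffhauser \cite{LS} and the author \cite{B}. The introduction records that $M(1,0)^{\tau}_0 \cong U(1)^g$ as Lie groups, so its mod 2 Poincar\'e series equals $(1+t)^g$. Theorem~\ref{thm1} then gives the K\"unneth-style factorization
$$P_t(M(2,d)^{\tau}_w;\Z_2) \;=\; (1+t)^g \cdot P_t(M(2,\xi)^{\tau};\Z_2),$$
so the problem reduces to dividing a known rational expression by $(1+t)^g$ and simplifying.

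For a self-contained derivation avoiding direct appeal to \cite{B, LS}, one can run the argument used in the body of the paper. By the $\G_{\R}$-equivariant perfection of the real Harder-Narasimhan stratification (see \S\ref{Strategy for proving Theorems} and \cite{B}) the Poincar\'e series of $B\G_{\R}$ splits as the series of $(\pC^{\ttau}_{ss})_{h\G_{\R}}$ plus Thom contributions from the unstable strata. For $r=2$, each unstable stratum is indexed by a $C^{\infty}$-Real splitting $(E,\ttau) = (L_1,\ttau_1) \oplus (L_2,\ttau_2)$ with $\deg L_1 > \deg L_2$, and the corresponding homotopy quotient factors (as in the proof of Theorem~\ref{eqperf}) through $B\G(L_1)_{\R} \times B\G(L_2)_{\R}$, whose $\Z_2$-Poincar\'e series is computed by the same Eilenberg-Moore machinery used in \S\ref{Topology of BCG}. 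Combining this with the closed form for $P_t(B\G_{\R};\Z_2)$ from (\cite{B} Theorem~6.1) and the $\G_{\R}$-analogue of Corollary~\ref{homotequivMC} (namely $M(2,d)^{\tau}_w \times B\R^* \simeq (\pC_{ss}^{\ttau})_{h\G_{\R}}$), one reads off $P_t(M(2,d)^{\tau}_w;\Z_2)$.

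The final step is algebraic simplification. The only subtle point is the bookkeeping of the Thom contributions: one must sum over all topological types of Real decomposition $E = L_1 \oplus L_2$ compatible with the fixed $\ttau$. The number of such types realising a given pair of line bundle degrees is $2^{a-1}$, corresponding to the valid Stiefel-Whitney refinements, and this accounts for the prefactor $2^{a-1}$ in the correction term $-2^{a-1}t^g(1+t)^g$ of \eqref{ercaboecub}. I expect no conceptual obstacle beyond this bookkeeping.
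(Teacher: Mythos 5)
Your proposal is correct and follows essentially the same route as the paper: the paper offers no separate proof of this proposition but states in the introduction that it follows by combining Theorem \ref{thm1} with the recursive mod~$2$ formulas for $P_t(M(2,d)^{\tau}_w;\Z_2)$ from \cite{B,LS}, i.e.\ by dividing by $P_t(M(1,0)^{\tau}_0;\Z_2)=(1+t)^g$, exactly as you do; your supplementary unpacking of the recursion (one unstable stratum of real codimension $2d_1-d+g-1$ for each of the $2^{a-1}$ Stiefel--Whitney refinements of a fixed degree pair, each contributing $(1+t)^{2g}/(1-t)^2$) is the correct bookkeeping behind the $-2^{a-1}t^g(1+t)^g$ term. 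Carrying out that division does produce the stated formula, with the exponent of $(1+t^3)$ coming out as $g+1-a$ (the printed ``$g-\alpha$'' is a typo), as one can confirm against the listed examples.
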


For example, for a real curve of genus $g=2$, respectively $a= 1,2,3$ real circles, $P_t(M(2,\xi)^\tau;\Z_2)$ equals
$$t^3+t^2+t+ 1 $$
$$t^3+2t^2+2t+1 $$
$$t^3+3t^2+3t+1 $$

For a real curve of genus $g=3$, $a=1,2,3,4$ real circles, $P_t(M(2,\xi)^\tau;\Z_2)$ equals
$$t^6+t^5+2t^4+4t^3+2t^2+t+1 $$
$$t^6 + 2t^5 + 4t^4 + 6t^3 + 4t^2 + 2t + 1 $$
$$t^6 + 3t^5 + 7t^4 + 10t^3 + 7t^2 + 3t + 1$$
$$t^6 + 4t^5 + 11t^4 + 16t^3 + 11t^2 + 4t + 1 $$

This can be compared $P_t(M(2,\xi)^\tau;\Z_p)$, for $p$ odd, for a real curve of genus $g=3$ with respectively $c=0,1,2,3$ even circles:
$$t^6+2t^3+1 $$
$$t^6+2t^3+1 $$
$$t^6+t^4+4t^3+t^2+1 $$
$$t^6+3t^4+8t^3+3t^2+1 $$

For a real curve of genus $g=4$, $a=1,2,3,4,5$ real circles, $P_t(M(2,\xi)^\tau;\Z_2)$ equals
$$t^9 + t^8 + 2t^7 + 6t^6 + 6t^5 + 6t^4 + 6t^3 + 2t^2 + t + 1 $$
$$t^9 + 2t^8 + 4t^7 + 9t^6 + 12t^5 + 12t^4 + 9t^3 + 4t^2 + 2t + 1 $$
$$t^9 + 3t^8 + 7t^7 + 15t^6 + 22t^5 + 22t^4 + 15t^3 + 7t^2 + 3t + 1$$
$$t^9 + 4t^8 + 11t^7 + 25t^6 + 39t^5 + 39t^4 + 25t^3 + 11t^2 + 4t + 1 $$
$$ t^9 + 5t^8 + 16t^7 + 40t^6 + 66t^5 + 66t^4 + 40t^3 + 16t^2 + 5t + 1 $$ 

\begin{prop}
Let $\Sigma$ be a genus $g$ real curve with $a >0$ real path components and let $\xi$ be a Real line bundle over $(\Sigma, \tau)$ of degree not divisible by three. The moduli space $M(2,\xi)^\tau$ has $\Z_2$ Poincar\'e series

\begin{eqnarray*} P_t(M(3,\xi)^{\tau};\Z_2) =  \frac{(1+t)^{b} (1+t^2)^{2b}(1+t^3)^{g}(1+t^5)^{g-b}}{(1-t)(1-t^2)^2(1-t^3)}\\    
- 2^b\frac{t^{2g}(1+t)^{g+b}(1+t^2)^b(1+t^3)^{g-b} }{t(1-t)^3(1-t^3)}\\
+4^b\frac{t^{3g}(1+t)^{2g} (1+t^2+t^4)}{ t(1-t)^2(1-t^2)(1-t^6)} \end{eqnarray*}
\end{prop}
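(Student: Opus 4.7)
The proof strategy parallels that of the rank-two proposition above, and proceeds in two main steps.

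The first step is a reduction via Theorem \ref{thm1}. Since $M(1,0)_0^\tau \cong (S^1)^g$, its mod 2 Poincar\'e series equals $(1+t)^g$. The K\"unneth decomposition
$$ H^*(M(r,d)_w^\tau;\Z_2) \cong H^*(M(r,\xi)^\tau;\Z_2) \otimes H^*(M(1,0)_0^\tau;\Z_2) $$
provided by Theorem \ref{thm1} then yields
$$ P_t(M(3,\xi)^\tau;\Z_2) = P_t(M(3,d)_w^\tau;\Z_2) / (1+t)^g, $$
so the task reduces to computing the mod 2 Poincar\'e series of the full non-fixed-determinant moduli space $M(3,d)_w^\tau$.

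The second step applies the rank-three recursion of \cite{B, LS}, i.e.\ the $\G_\R$-equivariant perfection of the real Harder-Narasimhan stratification over $\Z_2$. This gives the identity
$$ P_t(B\G_\R;\Z_2) = P_t((\pC^{\ttau}_{ss})_{h\G_\R};\Z_2) + \sum_{\mu \neq ss} t^{d_\mu} P_t((\pC^{\ttau}_\mu)_{h\G_\R};\Z_2). $$
The left side is known explicitly from \cite[Thm 6.1]{B} and is reproduced in \S \ref{Topology of BCG}. In rank three the unstable Harder-Narasimhan types are $(2,1)$, $(1,2)$ and $(1,1,1)$. Using the product decomposition (\ref{stratasplitting}) of each unstable homotopy quotient as a product of semistable strata of smaller rank, the $(2,1)$ and $(1,2)$ contributions feed in the rank-two Poincar\'e series (\ref{ercaboecub}) proven just above, while $(1,1,1)$ involves only rank-one classifying spaces (whose Betti numbers follow from Lemma \ref{homtype1}). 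Solving for $P_t((\pC^{\ttau}_{ss})_{h\G_\R};\Z_2)$, and then using $\pC^{\ttau}_{ss}/\overline{\G}_\R \cong M(3,d)_w^\tau$ together with the splitting $\G_\R \cong \overline{\G}_\R \times \R^*$ from (\ref{noncansplit}) to strip off the scalar factor $P_t(B\R^*;\Z_2) = 1/(1-t)$, one recovers $P_t(M(3,d)_w^\tau;\Z_2)$. Dividing by $(1+t)^g$ produces the stated three-term expression, with each term corresponding to one of the three Harder-Narasimhan types.

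The main obstacle is the combinatorial bookkeeping. For each Harder-Narasimhan type one sums over all admissible degree splittings subject to the slope inequalities, and over Real topological types of the line-bundle summands subject to compatibility with the total Stiefel-Whitney class $w$. The codimension factors $t^{d_\mu}$ together with the geometric series arising from the degree sums should collapse into the denominators $(1-t)(1-t^2)^2(1-t^3)$, $t(1-t)^3(1-t^3)$ and $t(1-t)^2(1-t^2)(1-t^6)$ appearing in the three terms. The numerator factors with exponents in $b$ encode the allowed distributions of odd circles among the summands, and matching the resulting sums against the specific closed form requires careful manipulation of geometric-series and binomial identities, analogous to but more intricate than what is needed for the rank-two formula.
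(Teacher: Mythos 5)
Your plan is correct and matches the paper's route exactly: the paper likewise obtains this formula by combining the K\"unneth splitting of Theorem \ref{thm1} (dividing out the factor $(1+t)^g$ from $M(1,0)_0^\tau$) with the rank-three instance of the $\Z_2$-equivariantly perfect real Harder--Narasimhan recursion of \cite{B, LS}. The paper does not spell out the combinatorial bookkeeping over Harder--Narasimhan types either, deferring to the closed forms in \cite{LS} \S 6.2, so your outline is faithful to what is actually done.
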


In genus $g=2$ and $a=1,2,3$, $P_t(M(3,1,\tau))$ equals

$$ t^8 + t^7 + 3t^6 + 5t^5 + 4t^4 + 5t^3 + 3t^2 + t + 1 $$
$$	t^8 + 2t^7 + 6t^6 + 11t^5 + 12t^4 + 11t^3 + 6t^2 + 2t + 1 $$ 
$$	t^8 + 3t^7 + 10t^6 + 21t^5 + 26t^4 + 21t^3 + 10t^2 + 3t + 1  $$

A closed form formula for the mod 2 Poincar\'e series can be derived from the formula produced by Liu and Schaffhauser \cite{LS} section 6.2.

\section{Orientability and Monotonicity}\label{Orientability and Monotonicity}

\begin{prop}\label{linesreal}
Let $(\Sigma,\tau)$ be a real curve with Real line bundle $\xi$.  Then every element of the Picard group $Pic(M(r,\xi))\cong \Z$ can be represented by a Cartier divisor D such that $\tau(D) = D$.
\end{prop}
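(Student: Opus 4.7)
My plan proceeds in two stages: first I would show that $\tau^*$ acts trivially on $\Pic(M(r,\xi)) \cong \Z$, and second I would produce a $\tau$-invariant effective Cartier divisor representing the positive ample generator. Once both are in hand, every class $n[\mathcal{L}] \in \Pic$ has a $\tau$-invariant representative $nD$.

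For the first stage, by Drezet--Narasimhan $\Pic(M(r,\xi))$ is generated by an ample class $\mathcal{L}$, and $\tau^*$ acts on $\Z \cong \Pic$ by $\pm 1$. Because antiholomorphic pullback of a positive $(1,1)$-form is, after complex conjugation, again a positive $(1,1)$-form, $\tau^*\mathcal{L}$ remains ample. The only ample class in $\Z$ is the positive generator, so $\tau^*\mathcal{L} \cong \mathcal{L}$.

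For the second stage, I would use the generalized theta divisor. The Euler-characteristic condition $\chi(E \otimes F) = 0$ with $\gcd(r,d)=1$ forces the auxiliary bundle $F$ on $\Sigma$ to have rank a multiple of $r$; the smallest choice is $F$ of rank $r$ and degree $r(g-1)-d$, for which Drezet--Narasimhan yields $\mathcal{O}(\Theta_F) \cong \mathcal{L}$, where
\[
\Theta_F := \{ [E] \in M(r,\xi) \mid h^0(\Sigma, E \otimes F) > 0 \}.
\]
By the classification of Real $C^\infty$-bundles on $(\Sigma,\tau)$ in \cite{BHH}, such an $F$ can be chosen to carry a Real structure (the parity condition on its degree is automatic from $\gcd(r,d)=1$ together with the compatibility requirement on $\xi$). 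For any Real $F$, the identity $h^0(\Sigma, \tau^*\overline{E} \otimes F) = h^0(\Sigma, E \otimes F)$, coming from Real Serre duality together with the Real structure on $F$, shows that $\Theta_F$ is set-theoretically $\tau$-invariant; the underlying scheme structure is cut out by a $\tau$-equivariant determinantal condition, so $\Theta_F$ is $\tau$-invariant as a Cartier divisor.

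The principal technical obstacle is verifying that the class $[\Theta_F]$ obtained from the rank-$r$ choice is in fact the generator $\mathcal{L}$ and not a higher positive power $\mathcal{L}^k$. Should the naive choice only give $\mathcal{L}^k$ with $k>1$, the remedy is to combine two $\tau$-invariant theta divisors $\Theta_{F_1}, \Theta_{F_2}$ arising from distinct Real auxiliary bundles whose exponents in $\Pic$ are coprime, and then apply Bezout: an appropriate integer linear combination produces a (not necessarily effective) $\tau$-invariant Cartier divisor in class $\mathcal{L}$, from which every element of $\Pic$ is obtained by taking integer multiples.
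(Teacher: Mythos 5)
Your proposal is correct and follows essentially the same route as the paper: both reduce to the Drezet--Narasimhan theorem that $\Pic(M(r,\xi))\cong\Z$ is generated by a theta divisor $\Theta_{\mathcal F}$, and then use that $\tau(\Theta_{\mathcal F})=\Theta_{\tau^*\overline{\mathcal F}}$ so that a $\tau$-compatible choice of auxiliary bundle makes the divisor itself invariant --- you simply spell out the choice of a Real $\mathcal F$ where the paper's one-line proof asserts independence of $\mathcal F$. The only caveat is your parenthetical claim that the required Real structure on $\mathcal F$ always exists: when $\Sigma^{\tau}=\emptyset$ (so $d$ is even and $r$ is odd) and $g$ is even, the degree $r(g-1)-d$ is odd and no Real bundle of rank $r$ exists, but a Quaternionic $\mathcal F$ does, and this still satisfies $\tau^*\overline{\mathcal F}\cong\mathcal F$ and hence still yields $\tau(\Theta_{\mathcal F})=\Theta_{\mathcal F}$.
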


\begin{proof}
It was proven by Drezet and Narasimhan (\cite{DN}, Theorem B) that $Pic(M(r,\xi)) \cong \Z$ and is generated by the \emph{theta divisor} $\Theta$ constructed as follows.  Choose a fixed semistable, algebraic vector bundle $\mathcal{F}$ over $\Sigma$ (of some particular rank and degree which is unimportant for our purposes) and define
$$ \Theta:= \{ \E \in M(r,\xi) ~|~H^0(\Sigma, \E \otimes \mathcal{F}) \neq 0\}. $$
Since $\Theta$ is independent of $\mathcal{F}$, so  $\tau(\Theta) = \Theta$. Since every other element of $Pic(M(r,\xi))$ may be represented by $k \Theta$ for some $k \in \Z$, the result follows. 
\end{proof}

\begin{cor}
Let $(\Sigma, \tau)$ be a real curve with Real line bundle $\xi$. The dualizing sheaf $\omega$ of $M(r,\xi)$ is a Real line bundle with fixed point set $\omega^{\tau}$ a topologically trivial $\R^1$-bundle over $M(r,\xi)^{\tau}$.  In particular, if $M(r,\xi)^{\tau}$ is non-singular, then $M(r,\xi)^{\tau}$ is an orientable manifold.
\end{cor}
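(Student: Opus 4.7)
The plan is to establish the three assertions of the corollary in order, relying crucially on Proposition \ref{linesreal} together with Drezet-Narasimhan's computation of the canonical bundle $\omega$ of $M(r,\xi)$.

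First, to show $\omega$ is a Real line bundle, I would combine Proposition \ref{linesreal} with the Drezet-Narasimhan formula $\omega \cong \Theta^{\otimes k}$ for an explicit integer $k$. Representing $\Theta$ by a $\tau$-invariant Cartier divisor $D$ as guaranteed by Proposition \ref{linesreal}, the $\tau$-invariant divisor $kD$ represents $\omega$; hence $\tau$ lifts canonically to an antiholomorphic involution $\ttau$ on $\omega = \mathcal{O}(kD)$. Next, to show that $\omega^{\ttau}$ is topologically trivial, the essential input is that the exponent $k$ appearing in the Drezet-Narasimhan formula is \emph{even} (indeed $k = -2h$ for the Fano-type index $h$ of $M(r,\xi)$). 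Writing $\mu := \mathcal{O}(-hD)$ with its Real structure coming from $D$, the equality of divisors $kD = 2(-hD)$ yields a canonical isomorphism of Real line bundles $\omega \cong \mu^{\otimes 2}$, and hence $\omega^{\ttau} \cong (\mu^{\ttau})^{\otimes 2}$. The tensor square of any real line bundle has vanishing first Stiefel-Whitney class, so $\omega^{\ttau}$ is topologically trivial as an $\R^1$-bundle over $M(r,\xi)^{\tau}$.

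Finally, for orientability when $M(r,\xi)^{\tau}$ is non-singular: at each $p \in M(r,\xi)^{\tau}$ the anti-holomorphicity of $\tau$ identifies $T_p M(r,\xi) = T_p M(r,\xi)^{\tau} \otimes_{\R} \C$, so globally one obtains a canonical isomorphism of Real line bundles
\[
\omega^{-1}\big|_{M(r,\xi)^{\tau}} \;\cong\; \bigl(\det\nolimits_{\R} TM(r,\xi)^{\tau}\bigr) \otimes_{\R} \C,
\]
whose fixed locus is exactly $\det_{\R} TM(r,\xi)^{\tau}$. A nowhere-vanishing section of $(\omega^{-1})^{\ttau} = (\omega^{\ttau})^{\vee}$, which exists by the preceding paragraph, then orients $M(r,\xi)^{\tau}$.

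The only non-formal step is verifying the parity of $k$ in the Drezet-Narasimhan formula for $\omega$; everything else is divisor-level bookkeeping driven by Proposition \ref{linesreal}. This is thus the point where I expect to spend the most care, although it is well-known in the literature once the explicit generator of $\Pic(M(r,\xi))$ has been fixed.
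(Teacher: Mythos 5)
Your proposal is correct and follows essentially the same route as the paper: cite Drezet--Narasimhan's Theorem F to write $\omega$ as the square of a power of $\mathcal{O}(\Theta)$, invoke Proposition \ref{linesreal} to make that square root Real, and conclude that $\omega^{\ttau}$ is the tensor square of a real line bundle, hence topologically trivial, with orientability following because the real locus of the dualizing sheaf is the orientation line bundle of $M(r,\xi)^{\tau}$. The only difference is that you spell out the final identification $\omega^{-1}|_{M(r,\xi)^{\tau}} \cong (\det_{\R} TM(r,\xi)^{\tau})\otimes_{\R}\C$, which the paper leaves implicit.
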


\begin{proof}
Drezet and Narasimhan (\cite{DN}, Theorem F) prove that the dualizing sheaf $\omega$ on $M(r,\xi)$ is equal to $\omega = \mathcal{O}(-2n \Theta)=  \SL^{\otimes 2}$, where $\SL =  \mathcal{O}(2n \Theta)$. By Lemma \ref{linesreal} $\SL$ is real, so $\omega^{\tau} \cong  (\SL^{\tau})^{\otimes 2}$ is trivial.
\end{proof}

Assume now that $gcd(r,d) =1$. With the standard symplectic structure $(M(r,\xi), \Omega)$ is monotone with $c_1(TM(r,\xi)) = 2 [\Omega]$ in $H^2(M(r,\xi),\Z) \cong \Z$. We have the following easy consequence. 

\begin{prop}
If $gcd(r,d) =1$ then $M(r,\xi)^{\tau}$ is a monotone Lagrangian submanifold of $(M(r,\xi),\Omega)$ with minimal Maslov number a positive multiple of 2. 
\end{prop}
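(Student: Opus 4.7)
The plan combines three ingredients: the anti-symplectic character of $\tau$, the doubling construction relating Maslov indices of disks to Chern numbers of spheres, and the identity $c_1(TM(r,\xi)) = 2[\Omega]$ recorded just before the statement.

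First I would verify that $M(r,\xi)^\tau$ is Lagrangian. Since $\Omega$ is K\"ahler and $\tau$ acts anti-holomorphically on $M(r,\xi)$, one has $\tau^*\Omega = -\Omega$; at each fixed point the $+1$-eigenspace of $d\tau$ is then a Lagrangian subspace of the tangent space by the standard linear algebra. Smoothness of $M(r,\xi)^\tau$ in the coprime case was already established in Section~\ref{Strategy for proving Theorems}.

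Second, for any smooth disk $u:(D^2,\partial D^2) \to (M(r,\xi), M(r,\xi)^\tau)$ I would form the doubled sphere $\hat u: S^2 \to M(r,\xi)$ consisting of $u$ on the upper hemisphere and $\tau \circ \bar u$ on the lower hemisphere, where $\bar u(z) := u(\bar z)$; the two maps agree on the equator because $\tau$ is the identity on $M(r,\xi)^\tau$. A direct computation using $\tau^*\Omega = -\Omega$ and the fact that $z \mapsto \bar z$ reverses orientation on $D^2$ gives
\[
\Omega(\hat u) \;=\; \Omega(u) + \Omega(\tau \circ \bar u) \;=\; 2\,\Omega(u),
\]
while the classical Riemann-Roch argument for totally real boundary conditions (doubling a Cauchy-Riemann operator with totally real boundary condition returns the ordinary $\bar\partial$-operator on $S^2$) yields $\mu(u) = c_1(\hat u)$, where $\mu$ denotes the Maslov index.

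Combining these with $c_1(TM(r,\xi)) = 2[\Omega]$ gives $\mu(u) = c_1(\hat u) = 2\,\Omega(\hat u) = 4\,\Omega(u)$, which is the monotonicity statement. Because $[\Omega]$ generates $H^2(M(r,\xi);\Z) \cong \Z$, the Chern number $c_1(\hat u)$ is automatically even, so $\mu(u) \in 2\Z$. For strict positivity one uses that $M(r,\xi)$ is simply connected with $\pi_2(M(r,\xi)) \cong H_2(M(r,\xi);\Z) \cong \Z$: a positive generating sphere, viewed as a disk with constant boundary at any chosen point of $M(r,\xi)^\tau$, has Maslov index $4$. The only step that really demands care is the identity $\mu(u) = c_1(\hat u)$, which I would invoke by citation to the totally-real index theory literature rather than reprove in detail.
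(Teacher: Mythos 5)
Your proposal is correct and follows essentially the same route as the paper: both arguments rest on the doubling identity $\mu(u)=c_1(TM|_{D\cup\tau D})$ for disks bounding the real Lagrangian (the paper cites McDuff--Salamon, Theorem C.3.6, exactly the totally-real index theorem you invoke) together with the fact that $c_1(TM(r,\xi))=2[\Omega]$ is twice an integral class. Your write-up merely spells out a few steps the paper leaves implicit, namely the Lagrangian condition, the computation $\Omega(\hat u)=2\Omega(u)$, and the positivity of the minimal Maslov number via a generating sphere.
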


\begin{proof}
The Maslov index of a disk $D \rightarrow M$ bounding a Real Lagrangian is given simply given by $c_1(TM|_{D \cup \tau D})$ (\cite{MS} Theorem C.3.6). In our case, since $c_1(TM) =2 [\Omega]$ is twice another integral class, the minimal Maslov number for disks must be a multiple of 2.  
\end{proof}

\section*{Acknowledgements} Thanks to Indranil Biswas, Shengda Hu for stimulating discussions. This research was funded by an NSERC Discovery grant and was conducted in part at the Tata Institute for Fundamantal Research.

\bibliographystyle{amsalpha}
\bibliography{TomReferences}

\end{document}